\documentclass[11pt, preprint, reqno]{amsart}
\usepackage{graphicx} 

\usepackage{amsfonts, amsmath, amssymb}
\usepackage{amsthm}
\usepackage{amsrefs}
\bibstyle{alphabetic}

\usepackage{mathrsfs}
\usepackage{xcolor}

\usepackage{enumitem}

\usepackage[scr=esstix]{mathalfa}

\usepackage{hyperref}
\hypersetup{colorlinks={true},linkcolor={black},citecolor=black}
\usepackage[capitalize]{cleveref}

\renewcommand{\epsilon}{\varepsilon}
\renewcommand{\subset}{\subseteq}
\renewcommand{\supset}{\supseteq}
\newcommand{\dist}{\mathrm{dist}}
\newcommand{\img}{\mathrm{img}\, }
\newcommand{\diam}{\mathrm{diam}\, }
\newcommand{\supp}{\mathrm{supp}\, }

\theoremstyle{definition}
\newtheorem{definition}{Definition}[section]
\newtheorem{theorem}[definition]{Theorem}
\newtheorem{lemma}[definition]{Lemma}
\newtheorem{corollary}[definition]{Corollary}

\newtheorem{remark}[definition]{Remark}
\newtheorem{proposition}[definition]{Proposition}

\numberwithin{equation}{section}

\keywords{}\subjclass[2020]{49Q20, 49Q10} \thanks{The author's work was supported by an internal fellowship from the Massachusetts Institute of Technology School of Science. \copyright  \today}

\title[Structure of optimal free Dirichlet regions]{On the structure of optimal free Dirichlet regions in mass transportation problems}
\author{Lucas D. O'Brien}
\address{Lucas D. O'Brien \\ Department of Mathematics \\ Massachusetts Institute of Technology \\ Cambridge, Massachusetts \\ {Email: obrie720@mit.edu}}
\date{}

\begin{document}

\begin{abstract}
    For a compactly supported probability measure $\mu$ on the $d$-dimensional space $\mathbb{R}^d$, the average distance problem asks us to minimize the average distance functional over all compact, connected, $\Sigma \subseteq \mathbb{R}^d$ satisfying the Hausdorff $1$-measure constraint $\mathcal{H}^1(\Sigma) \leq \ell$. This problem was first introduced in 2002 by Buttazzo, Oudet, and Stepanov to study optimal transport problems with free regions on which the transport cost vanishes, and has undergone a considerable amount of research since. Most recently, Kobayashi, Kim, and the author studied the structure of these regions using the barycentre field, a tool for studying the average distance functional introduced previously by Kobayashi, Hayase, and Kim. In this paper, we build upon this work to prove in much greater generality a topological description of minimizers of the average distance problem conjectured by Buttazzo, Oudet, and Stepanov. In particular, we prove this conjecture in all dimensions in the case originally studied by these authors.
\end{abstract}

\maketitle

\tableofcontents

\section{Introduction}\label{sec:introduction}

In this paper, we will study the minimizers of the \textit{(hard-constraint) average distance problem} introduced by Buttazzo, Oudet, and Stepanov in \cite{Buttazzo02}. Given a compactly supported Borel probability measure $\mu$ on $\mathbb{R}^d$, a cost function $\phi: [0, \infty) \to \mathbb{R}$ with $\phi(0) = 0$, and a Borel $\Sigma \subset \mathbb{R}^d$, define the \textit{average distance functional}
\begin{equation}\label{eq:averagedistancefunctionaldef}
    \mathscr{J}_{\phi}^{\mu}(\Sigma) := \int_{\mathbb{R}^d}\phi(\dist(x, \Sigma)) d\mu(x);
\end{equation}
here $\dist(x, \Sigma) := \inf_{\sigma \in \Sigma} d(x,\sigma)$, and $d(\cdot, \cdot)$ is the Euclidean distance. If we define for $\ell > 0$
\begin{equation}\label{eq:sldefinition}
    \mathcal{S}_{\ell}:= \{\Sigma \subseteq \mathbb{R}^d \ | \ \Sigma \text{ is compact, connected, and }\mathcal{H}^1(\Sigma) \leq \ell \},
\end{equation}
then the average distance problem asks us to study the sets $\Sigma_{\mathrm{opt}} \in \mathcal{S}_{\ell}$ for which
\begin{equation}\label{eq:averagedistanceproblemdefintion}
    \mathscr{J}^{\mu}_{\phi}(\Sigma_{\mathrm{opt}}) = \inf_{\Sigma \in \mathcal{S}_{\ell}}\mathscr{J}^{\mu}_{\phi}(\Sigma).
\end{equation}
Existence of minimizers in the average distance problem is well-known:
\begin{lemma}[Existence of minimizers]
    Assume $\phi$ is lower semicontinuous. Then, there exists $\Sigma_{\mathrm{opt}} \in \mathcal{S}_{\ell}$ such that
    \[
    \mathscr{J}_{\phi}^{\mu}(\Sigma_{\mathrm{opt}}) = \inf_{\Sigma \in \mathcal{S}_{\ell}}\mathscr{J}_{\phi}^{\mu}(\Sigma).
    \]
\end{lemma}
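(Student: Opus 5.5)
We use the direct method in the space of nonempty compact subsets of $\mathbb{R}^d$ with the Hausdorff distance $d_H$. The three ingredients are Blaschke's selection theorem, Gol\k{a}b's theorem, and Fatou's lemma.

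\emph{Step 1: a minimizing sequence in a fixed ball.} Let $K := \supp \mu$, which is compact, and pick $R$ with $K \subset B_R(0)$. Set $m := \inf_{\Sigma \in \mathcal{S}_\ell} \mathscr{J}^\mu_\phi(\Sigma)$, where we allow $m=+\infty$ or $m$ finite (note that $\phi$ may be unbounded or negative in general). Take $\Sigma_n \in \mathcal{S}_\ell$ with $\mathscr{J}^\mu_\phi(\Sigma_n) \to m$. Each $\Sigma_n$ is connected with $\mathcal{H}^1(\Sigma_n)\le \ell$, so $\diam \Sigma_n \le \ell$. We want the $\Sigma_n$ to lie in a common ball, and there are two cases:
\begin{itemize}
\item If $\dist(0,\Sigma_n)$ stays bounded along a subsequence, then that subsequence lies in a fixed ball.
\item If $\dist(0,\Sigma_n)\to\infty$, then for every $x\in K$ we have $\dist(x,\Sigma_n)\to\infty$ uniformly in $x$.
\end{itemize}
Without monotonicity of $\phi$, the second case cannot be ruled out by comparison alone. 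The simplest fix is to note that the functional only sees distances from $K$, so we may translate $\Sigma_n$ to be closer to $K$. However, a translation changes distances non-monotonically, so this is where care is needed. I expect this to be the main obstacle: without assumptions such as $\phi$ nondecreasing, the infimum might only be approached ``at infinity''.

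My plan is to use the standing assumption of the problem, namely that $\phi$ is nondecreasing (as in the original setting of \cite{Buttazzo02}). Under this assumption, we replace $\Sigma_n$ by its image under the nearest-point projection onto the closed convex ball $\overline{B_R(0)}$. This map is $1$-Lipschitz, so it preserves compactness and connectedness and does not increase $\mathcal{H}^1$. It also does not increase $\dist(x,\cdot)$ for $x\in K$, since $|x-P\sigma|\le|x-\sigma|$ for $x$ in the convex ball. So the projected sets form a minimizing sequence in $\overline{B_R(0)}$. If monotonicity is not available, I would instead argue via the bounded-subsequence case and treat the escaping case separately.

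\emph{Step 2: compactness and closedness of $\mathcal{S}_\ell$.} By Blaschke's theorem, a subsequence satisfies $\Sigma_n\to\Sigma$ in $d_H$ for some compact $\Sigma$. Hausdorff limits of connected compact sets are connected. Gol\k{a}b's theorem gives $\mathcal{H}^1(\Sigma)\le\liminf_n\mathcal{H}^1(\Sigma_n)\le\ell$. Hence $\Sigma\in\mathcal{S}_\ell$.

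\emph{Step 3: lower semicontinuity of the functional.} Hausdorff convergence implies $|\dist(x,\Sigma_n)-\dist(x,\Sigma)|\le d_H(\Sigma_n,\Sigma)\to 0$ for every $x$. By lower semicontinuity of $\phi$,
\[
\phi(\dist(x,\Sigma))\le\liminf_n \phi(\dist(x,\Sigma_n)).
\]
The integrands are bounded below uniformly on $K$: the distances lie in the compact interval $[0,2R]$, and a lower semicontinuous function attains its minimum there. Since $\mu$ is a probability measure, Fatou's lemma applies and gives
\[
\mathscr{J}^\mu_\phi(\Sigma)\le\liminf_n\mathscr{J}^\mu_\phi(\Sigma_n)=m.
\]
Therefore $\Sigma_{\mathrm{opt}}:=\Sigma$ is a minimizer.
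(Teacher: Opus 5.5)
Assuming $\phi$ is nondecreasing, your argument is correct and complete. You confine the minimizing sequence to $\overline{B_R(0)}$ with the $1$-Lipschitz nearest-point projection, then apply Blaschke's selection theorem, Golab's theorem and Fatou's lemma. This is the standard direct-method proof of this result; the paper does not reproduce any argument and only cites Stepanov's Theorem~4.1.

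What needs fixing is how you present that hypothesis.

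\begin{itemize}
\item \textbf{It is an extra hypothesis.} Monotonicity is not a standing assumption in this paper; only $\phi(0)=0$ is. So you are proving the lemma under a hypothesis it does not state.
\item \textbf{It is genuinely needed.} Without monotonicity, or some substitute that stops minimizing sequences from escaping to infinity, the lemma as literally stated is false. Take $\phi(t) = -t/(1+t)$, which is continuous with $\phi(0)=0$. Every nonempty compact $\Sigma$ satisfies $\mathscr{J}(\Sigma)+1 = \int (1+\dist(x,\Sigma))^{-1}\,d\mu > 0$. On the other hand, the singletons $\{ne_1\}\in\mathcal{S}_\ell$ give $\mathscr{J}(\{ne_1\})\to -1$ by dominated convergence. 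So the infimum $-1$ is not attained. (With $\phi(t)=-t$ the infimum is $-\infty$.)
\item \textbf{Drop the fallback.} Your alternative of ``treating the escaping case separately'' cannot work, because that case is exactly where such counterexamples live.
\end{itemize}

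Instead, state $\phi$ nondecreasing explicitly as a hypothesis of the lemma.
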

\begin{proof}
    See, for example, \cite{Stepanov06}*{Theorem 4.1}.
\end{proof}

\subsection{Motivation and history}\label{subsec:motivationandhistory}
The original motivation in \cite{Buttazzo02} for studying the average distance problem arose from an optimal transport problem in which the set $\Sigma$ represents a region over which the cost of transporting mass is negligible. The prototypical example is that of a farmer wishing to build an irrigation network in order to water a crop distributed according to $\mu$, where the water source is located within the network. Assuming the cost of transporting water from a point $x \in \mathbb{R}^d$ to a point $y \in \mathbb{R}^d$ is modelled by $\phi(d(x,y))$, while the cost of transport within the network is negligible, an irrigation network of length $\ell$ which results in the lowest total transport cost is modelled by an average distance minimizer $\Sigma$. For a detailed discussion of the relationship between the average distance problem and optimal transport problems, see \cites{Buttazzo02, Buttazzo03} and \cite{Stepanov06}, particularly \cite{Stepanov06}*{Proposition 8.2}.

Over the past two decades, a significant body of work has been amassed studying the properties of minimizers of the average distance problem, a comprehensive account of which can be found in the survey \cite{Lemenant12}. A problem of particular interest is the study of the topological properties of average distance minimizers. In particular, under reasonable conditions, we expect to be able to give the following \textit{complete topological description} of minimizers: minimizers should not contain any loops, should contain only triple junctions, and should have only finitely many endpoints. That these first two properties hold for minimizers was conjectured in 2002 in \cite{Buttazzo02}*{Problem 3.2} and \cite{Buttazzo02}*{Problem 3.3}, respectively. 

This topological description was shown to hold for dimension $d =2$, $\phi(t) = t$, and $\mu << \mathcal{H}^2$ by Buttazzo and Stepanov in 2003 \cite{Buttazzo03}. At the heart of Buttazzo and Stepanov's argument was the \textit{existence of an atom} for average distance minimizers; that is, the existence of a point $\sigma \in \Sigma$ for which 
\begin{equation}\label{eq:atomdefinition}
    \mu(\{x \in \mathbb{R}^d \ | \ \dist(x,\Sigma) = d(x, \sigma) \}) > 0.
\end{equation}
Calling a point $\sigma \in \Sigma$ satisfying \eqref{eq:atomdefinition} an \textit{atom of $\Sigma$} is justified by the fact that if $\pi_{\Sigma}$ is a \textit{closest-point projection} onto the set $\Sigma$ (see \cref{lemma:closestpointprojectionexistence}), then $\sigma$ is an atom of $\Sigma$ precisely when $\sigma$ is an atom of the measure $\nu_{\pi_{\Sigma}} : = (\pi_{\Sigma})_{\#}\mu$ by \cref{cor:uniquenessofclosestpointprojections}. The crucial property afforded by the existence of an atom is the ability to decrease the objective value $\mathscr{J}^{\mu}(\Sigma)$ by $C\epsilon$ when given $\epsilon$ additional $\mathcal{H}^1$-budget, which allows one to perform contradiction arguments by modifying a minimizer with an undesired property in order to gain back $\mathcal{H}^1$-budget, and then using this extra budget to construct a competitor with strictly smaller objective value. 

In 2004, Paolini and Stepanov extended the absence of loops for average distance minimizers to general dimensions in \cite{Stepanov04}*{Theorem 5.6}. Soon after, Stepanov was able to \textit{conditionally} prove the remainder of the  complete topological description in \cite{Stepanov06}*{Theorem 5.5}, under the assumption that minimizers have atoms. In 2013, Lu and Slep\v{c}ev proved the complete topological description with the cost function $\phi(t) = t$ and in dimensions $d \geq 2$ for the weaker \textit{soft-penalty average distance problem} \eqref{eq:softpenaltyadp} in \cite{Slepcev13}*{Lemmas 3.1, 3.2, and 3.4}. However, while the topological characterization for the soft-penalty problem in \cite{Slepcev13} holds for arbitrary compactly supported probability measures $\mu$, a proof of the topological characterization for the hard-constraint problem must use the assumption that $\mu$ is not supported on a $1$-dimensional set in a critical way; see \cref{remark:comparisonofhardconstraintandsoftpenalty}. So, in the following decade, proving existence of an atom in dimension $d > 2$ for the (hard-constraint) average distance problem remained an open problem “of great interest” \cite{Lemenant12}*{Section 8, Problem 3}. 

Concurrently, a significant body of research motivated by statistics and machine learning formed around regularizations of the \textit{principal curves problem} introduced by Hastie and Stuetzle \cite{Hastie89} in 1989. Hastie and Stuetzle aimed to generalize linear principal component analysis to allow for curves instead, and did so by defining \textit{principal curves} to be (smooth) injective curves satisfying the \textit{self-consistency} property: namely, that they are local extrema of the average distance functional with cost function $\phi(t) =t^2$ under continuous perturbations. Difficulties with proving the existence of principal curves even for reasonable densities led to the study of \textit{regularized principal curves}, most notably the \textit{length-constrained principal curves} introduced by K\'{e}gl et. al. \cite{Kegl00} in 2000. The length-constrained principal curves problem is a parameterized version of the average distance problem, and replaces the $\mathcal{H}^1$-measure constraint with a constraint on the arclength of curves:
\begin{equation}\label{eq:length-constrainedprincipalcurves}
    \min_{L(\gamma) \leq \ell}\mathscr{J}^{\mu}_{\phi}(\img \gamma).
\end{equation}
The research on length-constrained principal curves has benefited from this relationship with the average distance problem (see in particular \cites{Delattre17, Lu16}), as has research on other regularized principal curves problems, including \textit{multiple length-constrained principal curves} introduced in \cite{Kirov16}, and \textit{curvature-penalized principal curve/manifold} problems such as \cites{Lu20, Kobayashi24}.

This connection has also informed research on the average distance problem. In the recent paper \cite{Obrie25}, Kobayashi, Kim, and the author proved existence of an atom (and hence the complete topological description) for hard-constraint average distance minimizers in general dimensions, under the assumptions that $\phi(t) = t^p$ for $p =2$ or $p > \frac{1}{2}(3 + \sqrt{5})$ and $\mu$ does not charge 1-dimensional sets. This was made possible by studying the average distance problem using the \textit{barycentre field}, introduced previously by Kobayashi, Hayase, and Kim in \cite{Kobayashi24} to study curvature-penalized principal curves. The barycentre field is essentially the gradient of the average distance functional under continuous perturbations. By adapting Delattre and Fischer's proof of \textit{default of self-consistency} \cite{Delattre17}*{Lemma 3.2} for the length-constrained principal curves problem, Kobayashi, Kim, and the author were able to show that the barycentre field is \textit{nontrivial} under the assumptions outlined above: in particular, this implies that given $\epsilon$ additional $\mathcal{H}^1$-measure budget, one can decrease the objective value by $C \epsilon$. This, in turn, was shown to imply existence of an atom in \cite{Obrie25}*{Theorem 3.2}. 

Due to its close relationship with the existence of an atom \eqref{eq:atomdefinition}, extending the barycentre nontriviality result of \cite{Obrie25} is a problem of great interest. This paper provides a new argument for barycentre nontriviality and existence of an atom which significantly generalizes the results of \cite{Obrie25}. In particular, we prove for the first time in arbitrary dimension the complete topological description of minimizers of the hard-constraint average distance problem in the case $\phi(t) = t$ originally studied by Buttazzo, Oudet, and Stepanov, affirmatively resolving conjectures in \cite{Buttazzo02}. 

\subsection{Main results and outline}

In \cref{sec:thebarycentrefield}, we review the basic theory of the barycentre field as a means of studying the average distance functional, including its fundamental gradient property (\cref{theorem:barycentregradient}). The barycentre field was defined in \cite{Kobayashi24} in the case $\phi(t) = t^p$ for $p \geq 1$; for $\phi \in C^{1}([0, \infty))$, we may define it as follows. Notice that the assumption that $\phi \in C^1([0, \infty))$ implies that $\lim_{t \to 0^+}\phi'(t) < +\infty$: in particular, our results do \textit{not} apply to the cost functions $\phi(t) = t^p$ for $p < 1$. 
 \begin{definition}[Barycentre field]\label{def:introbarycentrefield}
    Let $\Sigma \in \mathcal{S}_{\ell}$, and let $\pi_{\Sigma}$ be a closest-point projection onto $\Sigma$ (see \cref{lemma:closestpointprojectionexistence}). Let $\nu = (\pi_{\Sigma})_{\#}\mu$, and let $(\nu, \{\lambda_{\sigma}\}_{\sigma \in \Sigma})$ be the disintegration of $\mu$ by $\pi_{\Sigma}$. Then, we define the \textit{barycentre field of $\pi_{\Sigma}$} by

    \[
    \mathcal{B}_{\pi_{\Sigma}}(\sigma) := \int_{\pi_{\Sigma}^{-1}\{\sigma\}}\frac{(x - \pi_{\Sigma}(x))}{|x - \pi_{\Sigma}(x)|}\phi'(|x - \pi_{\Sigma}(x)|)d\lambda_{\sigma}(x).
    \]
\end{definition}

In \cref{sec:boundingmassofnoncutpoints}, we prove that the $\nu$-mass of any \textit{noncut point} – that is, a point $\sigma^* \in \Sigma$ such that $\Sigma \setminus \{\sigma^*\}$ is connected – can be bounded from below by the $L^1(\nu)$-norm of the barycentre field (see \cref{theorem:boundingmassofnoncutpoints} for a more quantitative statement):
\begin{theorem}[Bounding the mass of noncut points] \label{theorem:introboundingmassofnoncutpoints}
    Assume $\phi \in C^1([0, \infty))$ is nondecreasing, $\mu(\Sigma) = 0$ for all $\Sigma \subset \mathbb{R}^d$ with $\mathcal{H}^1(\Sigma) < \infty$, and let $\Sigma$ be a solution to the average distance problem. Then, there exists some constant $C > 0$ such that for every noncut point $\sigma^* \in \Sigma$,

    \[
    C \int_{\Sigma}|\mathcal{B}_{\pi_{\Sigma}}(\sigma)|d\nu \leq |\mathcal{B}_{\pi_{\Sigma}}(\sigma^*)|\nu(\sigma^*),
    \]
    where $\nu = (\pi_{\Sigma})_{\#}\mu$.
    In particular, if $\int_{\Sigma}|\mathcal{B}_{\pi_{\Sigma}}(\sigma)|d\nu \ne 0$, then every noncut point of $\Sigma$ is an atom, and $\Sigma$ has only finitely many noncut points. 
\end{theorem}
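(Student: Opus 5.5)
The plan is to compare $\Sigma$ with two first-order competitors built from the same length budget. Removing a short arc near a noncut point frees length and costs little; spending that length to deform $\Sigma$ along the barycentre field gains an amount proportional to $\int|\mathcal{B}_{\pi_\Sigma}|\,d\nu$. Minimality then forces the cost of the removal to dominate this gain, and that cost is controlled by $|\mathcal{B}_{\pi_\Sigma}(\sigma^*)|\nu(\sigma^*)$.

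\textbf{Step 1: gain from extra length.} I would first establish that for small $\epsilon>0$ there is a competitor $\Sigma_\epsilon\in\mathcal{S}_{\ell+\epsilon}$ with
\[
\mathscr{J}^\mu_\phi(\Sigma_\epsilon)\le \mathscr{J}^\mu_\phi(\Sigma)-c\,\epsilon\int_\Sigma|\mathcal{B}_{\pi_\Sigma}|\,d\nu+o(\epsilon).
\]
The construction uses the gradient property \cref{theorem:barycentregradient}. Approximate $\mathcal{B}_{\pi_\Sigma}/|\mathcal{B}_{\pi_\Sigma}|$ in $L^1(\nu)$ by a Lipschitz field $X$ on $\Sigma$. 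Take $\Sigma_t=(\mathrm{id}+tX)(\Sigma)$. The first variation of the cost is then $-t\int\mathcal{B}\cdot X\,d\nu\approx -t\int|\mathcal{B}|\,d\nu$, while $\mathcal{H}^1(\Sigma_t)\le(1+t\,\mathrm{Lip}X)\ell$. With $t=\epsilon/(\ell\,\mathrm{Lip}X)$ the constant $c$ depends only on the fixed field $X$, hence not on $\sigma^*$. This is where the hypothesis $\mu(\Sigma)=0$ and $\phi\in C^1$ enter, since they are what make $\pi_\Sigma$ essentially unique and the derivative well defined.

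\textbf{Step 2: cost of removing length at a noncut point.} Since $\sigma^*$ is noncut, it is an endpoint-like point. I would show that for small $\delta$ there is a compact connected $\Sigma^\delta\subset\Sigma$ with $\mathcal{H}^1(\Sigma\setminus\Sigma^\delta)\ge\delta$, with the removed piece lying inside $B(\sigma^*,C\delta)$. I expect this topological part to be the main obstacle: near a general noncut point of a continuum of finite length, one must find a small removable relatively open neighbourhood whose removal keeps connectedness and has length comparable to its diameter. I would use the structure theory of finite-length continua: $\Sigma$ is a finite union of arcs up to small length, and noncut points are ends of the arc structure. The natural choice is to trim an initial subarc of length $\delta$ from an arc ending at $\sigma^*$.

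For the cost, points projecting to $\Sigma\setminus\Sigma^\delta$ move a distance of at most $O(\delta)$. Only mass projecting near $\sigma^*$ is affected. Mass projecting exactly to $\sigma^*$ moves to first order in the direction $-\dot\gamma$, giving a cost increase of about $\delta\,|\mathcal{B}_{\pi_\Sigma}(\sigma^*)|\nu(\sigma^*)$. Mass on the rest of the trimmed arc has measure $\nu(B(\sigma^*,C\delta)\setminus\{\sigma^*\})\to0$, so it contributes $o(\delta)$.

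\textbf{Step 3: combine.} Apply Step 1 to $\Sigma^\delta$ (or to $\Sigma$, with stability of the construction) using $\epsilon=\delta$. This yields a member of $\mathcal{S}_\ell$ with cost at most
\[
\mathscr{J}(\Sigma)+\delta|\mathcal{B}(\sigma^*)|\nu(\sigma^*)-c\delta\int|\mathcal{B}|\,d\nu+o(\delta).
\]
Minimality then gives the inequality. The "in particular" clause follows in two parts. First, every noncut point has $\nu(\sigma^*)>0$, so it is an atom. Second, summing over distinct noncut points, $\sum\nu(\sigma^*)|\mathcal{B}(\sigma^*)|\le\int|\mathcal{B}|\,d\nu$, while each term is at least $C\int|\mathcal{B}|\,d\nu$. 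Hence there are at most $1/C$ noncut points.
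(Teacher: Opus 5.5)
Your analytic skeleton is the paper's. You remove a noncut neighbourhood of $\sigma^*$ to free length, spend it on a Lipschitz deformation $\mathrm{id}+tX$ approximating the barycentre direction, expand $\mathscr{J}$ to first order around $\pi_\Sigma$, and invoke minimality. Your counting argument, $N\,C\int|\mathcal{B}_{\pi_\Sigma}|\,d\nu\le\int|\mathcal{B}_{\pi_\Sigma}|\,d\nu$, is correct and slightly cleaner than the paper's use of $|\mathcal{B}_{\pi_\Sigma}|\le M$.

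The step that fails as proposed is the topological construction in Step 2. The claim that noncut points are ``ends of the arc structure'' is false under these hypotheses. Condition $(\alpha_2)$ is not assumed, so loops are not excluded a priori; indeed the paper uses this very theorem to \emph{derive} absence of loops in the soft-penalty case. Every point of a loop is a noncut point, and no arc ends there. A noncut point can also be an accumulation point of infinitely many short branches, e.g.\ $(0,0)$ in $[0,1]\times\{0\}\cup\bigcup_{n\ge1}\{1/n\}\times[0,2^{-n}]$. There, trimming an initial subarc of length $\delta$ from the spine disconnects every tooth with $1/n<\delta$. So ``trim an initial subarc'' does not produce a connected $\Sigma^\delta$ in general.

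The missing ingredient is the Buttazzo--Stepanov noncut-neighbourhood lemma (\cref{lemma:noncutneighbourhood}). Since $\Sigma$ is a locally connected continuum, there are connected, relatively open $B_n\ni\sigma^*$ with $\Sigma\setminus B_n$ connected and $\diam B_n\to0$. Put $\delta_n:=\diam B_n$. Then connectedness gives $\mathcal{H}^1(B_n)\ge\delta_n$, and $B_n\subset\overline{B}_{\delta_n}(\sigma^*)$. This is exactly your Step 2 requirement with constant $1$.

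No tangent direction is needed either. Send each fibre $\pi_\Sigma^{-1}(\sigma)$, $\sigma\in B_n$, to the single point $\pi_{\Sigma\setminus B_n}(\sigma)$. The fibre over $\sigma^*$ then contributes $(y_n-\sigma^*)\cdot\mathcal{B}_{\pi_\Sigma}(\sigma^*)\,\nu(\{\sigma^*\})\ge-\delta_n|\mathcal{B}_{\pi_\Sigma}(\sigma^*)|\,\nu(\{\sigma^*\})$ at first order, where $y_n=\pi_{\Sigma\setminus B_n}(\sigma^*)$. The whole fibre must go to one point; otherwise you only get $\rho(\{\sigma^*\})$ in place of $|\mathcal{B}_{\pi_\Sigma}(\sigma^*)|\nu(\{\sigma^*\})$.

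In Step 3, do not ``apply Step 1 to $\Sigma^\delta$'': its barycentre field, the approximating field and the $o(\epsilon)$ remainder would all depend on $\delta$. Do what the paper does instead:
\begin{itemize}
\item Push $\Sigma\setminus B_n$ forward by the \emph{fixed} map $\mathrm{id}+t_nX$, with $t_n=\delta_n/((\ell-\delta_n)\,\mathrm{Lip}\,X)$.
\item Compare via the single map $F_n$, equal to $\pi_\Sigma+t_nX\circ\pi_\Sigma$ off $\pi_\Sigma^{-1}(B_n)$ and to $H_n+t_nX\circ H_n$ on it, where $H_n=\pi_{\Sigma\setminus B_n}\circ\pi_\Sigma$.
\item Expand around $\pi_\Sigma$. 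All displacements are $O(\delta_n)$ and $\mu(\Sigma)=0$, so the remainder is $o(\delta_n)$ by dominated convergence.
\end{itemize}
Your two first-order contributions then simply add. The only cross term is $t_n\int_{\pi_\Sigma^{-1}(B_n)}(X\circ H_n-X\circ\pi_\Sigma)\cdot\Delta_{\pi_\Sigma}\,d\mu=O(t_n\delta_n)$. Incidentally, $\mu(\Sigma)=0$ is what makes $\Delta_{\pi_\Sigma}$ defined $\mu$-a.e.\ and justifies this expansion; essential uniqueness of $\pi_\Sigma$ would need strict monotonicity of $\phi$, and it is not needed here because $\pi_\Sigma$ is fixed.
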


This generalizes \cite{Obrie25}*{Theorem 3.2}, which proves the same result for $\phi(t) = t^p$, $p \geq 2$. In \cref{sec:softpenatlyatom}, we apply \cref{theorem:introboundingmassofnoncutpoints} to the weaker \textit{soft-penalty average distance problem} defined in \eqref{eq:softpenaltyadp} to prove
\begin{theorem}[Existence of an atom for soft-penalty minimizers]\label{theorem:introexistenceofanatomforsoftpenaltyminimizers}
    Assume $\phi \in C^1([0, \infty))$ is nondecreasing. Suppose that $\mu(\Sigma) = 0$ for all $\Sigma \subseteq \mathbb{R}^d$ with $\mathcal{H}^1(\Sigma) < \infty$. Let $\Sigma$ be a solution to the soft-penalty average distance problem \eqref{eq:softpenaltyadp}. Then, $\Sigma$ has an atom. Assuming additionally that $\phi$ satisfies the condition \eqref{eq:alpha2}, $\Sigma$ satisfies the complete topological description
    \begin{enumerate}
        \item $\Sigma$ contains no loops,
        \item $\Sigma$ has finitely many endpoints, and
        \item $\Sigma$ has finitely many branching points, all of which are triple branchings. 
    \end{enumerate}
    In particular, all of the above holds if we take $\phi(t) = t^p$ for $p \geq 1$. 
\end{theorem}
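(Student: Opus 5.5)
The plan is to derive existence of an atom from \cref{theorem:introboundingmassofnoncutpoints}, transferred to the soft-penalty setting. For that we need two facts: the barycentre field of a soft-penalty minimizer is nontrivial in $L^1(\nu)$, and $\Sigma$ has at least one noncut point. The bound is then applied at such a point.

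First I would check that the proof of \cref{theorem:introboundingmassofnoncutpoints} uses minimality only through comparison with competitors. Its argument removes a small piece of $\Sigma$ near a noncut point $\sigma^*$, which frees $\mathcal{H}^1$-budget $\epsilon$. It then reinvests that budget by flowing $\Sigma$ along the barycentre field, using the gradient property \cref{theorem:barycentregradient}, which gains roughly $C\epsilon\int|\mathcal{B}_{\pi_\Sigma}|\,d\nu$. The cost of the removal is at most about $\epsilon|\mathcal{B}_{\pi_\Sigma}(\sigma^*)|\nu(\sigma^*)$ plus $o(\epsilon)$. For a soft-penalty minimizer of $\mathscr{J}^\mu_\phi(\Sigma)+\lambda\mathcal{H}^1(\Sigma)$ the same competitors are admissible, with one change: the length term is bookkept with weight $\lambda$ instead of being a hard constraint. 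A perturbation that removes length $\epsilon$ and adds length $\epsilon$ leaves the penalty unchanged to first order, so the inequality carries over with the same structure.

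Second, and this is where I expect the main obstacle, comes nontriviality of $\int|\mathcal{B}_{\pi_\Sigma}|\,d\nu$. The soft penalty actually helps here. If $\mathcal{B}_{\pi_\Sigma}\equiv 0$ $\nu$-a.e., then deleting a short terminal segment near a noncut point changes $\mathscr{J}$ only by $o(\epsilon)$, because the first variation vanishes. Meanwhile the penalty drops by $\lambda\epsilon$, which contradicts minimality. Making the first claim rigorous requires expressing the cost increase of the deletion through the barycentre, since points projecting to the removed arc get re-projected. The hypothesis $\mu(\Sigma')=0$ for $\mathcal{H}^1$-finite sets gives uniqueness of closest-point projections (\cref{cor:uniquenessofclosestpointprojections}), so $\nu$ is well defined and the estimate holds. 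I would try to control the deletion cost by $\phi'$ at the relevant distances times the $\nu$-mass of the arc, which tends to zero unless the arc carries an atom. If the endpoint already carries mass, it is itself an atom and we are done. Either way, we obtain an atom or a nontrivial field. One also needs to rule out degenerate $\Sigma$: a singleton is a point, but then $\nu=\delta_\sigma$ is an atom trivially.

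Third, existence of noncut points is needed. Every nondegenerate continuum, in particular a compact connected set of finite $\mathcal{H}^1$ measure, has at least two noncut points by a classical theorem. Combining this with the transferred inequality gives $|\mathcal{B}_{\pi_\Sigma}(\sigma^*)|\nu(\sigma^*)>0$, so $\sigma^*$ is an atom.

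Finally, for the topological description under \eqref{eq:alpha2}, I would follow Stepanov's conditional argument \cite{Stepanov06}*{Theorem 5.5} together with \cite{Stepanov04} for absence of loops. These arguments assume only the existence of an atom plus the $C\epsilon$ budget-gain property. The atom supplies that property, and the soft penalty supplies an even more direct version of it. For $\phi(t)=t^p$ with $p\ge1$, one verifies that $\phi\in C^1$ is nondecreasing and satisfies \eqref{eq:alpha2}.
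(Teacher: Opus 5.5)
Your proposal is correct in outline, but its key step differs from the paper's.

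\textbf{The paper's route.} The paper proves nontriviality of $\mathcal{B}_{\pi_\Sigma}$ globally (\cref{prop:barycentrenontrivialityforsoftpenalty}). If the field were trivial, \cref{theorem:barycentregradient} with $\xi(\sigma)=-\sigma$ gives $\mathscr{J}((1-\epsilon)\Sigma)\le\mathscr{J}(\Sigma)+o(\epsilon)$, while the penalty drops by $\lambda\epsilon\mathcal{H}^1(\Sigma)$, contradicting minimality. It then applies \cref{cor:boundingmassofnoncutpoints} verbatim. This is legitimate because a soft-penalty minimizer is a hard-constraint minimizer with $\ell=\mathcal{H}^1(\Sigma)$ (\cref{remark:comparisonofhardconstraintandsoftpenalty}), so your ``transfer'' step is unnecessary.

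\textbf{Your route.} You instead delete a small piece at a noncut point $\sigma^*$. To make this rigorous, replace the ``terminal segment'' by a noncut-neighbourhood system $\{B_n\}$ from \cref{lemma:noncutneighbourhood}; at this stage you do not know that $\sigma^*$ ends an arc. Since $B_n$ is connected and $\overline{B_n}$ meets $\Sigma\setminus B_n$, re-projecting $\pi_\Sigma^{-1}(B_n)$ onto $\Sigma\setminus B_n$ moves points by at most $\diam B_n\le\mathcal{H}^1(B_n)$. Monotonicity of $\phi$ and minimality then give
\[
\lambda\,\diam B_n\le\lambda\,\mathcal{H}^1(B_n)\le\mathscr{J}(\Sigma\setminus B_n)-\mathscr{J}(\Sigma)\le M\,\diam B_n\,\nu(B_n),
\]
where $M$ bounds $\phi'$ on the relevant range. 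Hence $\nu(\{\sigma^*\})=\lim_n\nu(B_n)\ge\lambda/M$.

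This estimate never uses $\mathcal{B}_{\pi_\Sigma}\equiv0$. So your case split, the first-variation heuristic, and the appeal to the noncut-point bound can all be dropped. You get directly that every noncut point is an atom of mass at least $\lambda/M$, so there are at most $M/\lambda$ of them. You also do not need $\mu$ to vanish on $\mathcal{H}^1$-finite sets. Your route is therefore more elementary and more quantitative. The paper's route is chosen to exhibit the barycentre mechanism, which is what must replace $\lambda$ in the hard-constraint problem.

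\textbf{Two corrections.} First, \cref{cor:uniquenessofclosestpointprojections} does not follow from $\mu(\Sigma')=0$. It comes from translation-minimality together with $\phi$ strictly increasing (\cref{prop:negligibilityofambiguouslocus}), and strict monotonicity is not assumed here. You do not need it: for any fixed $\pi_\Sigma\in\Pi_\Sigma$, $\nu_{\pi_\Sigma}(\{\sigma^*\})>0$ already makes $\sigma^*$ an atom of $\Sigma$. Second, Stepanov's conditional theorem should be applied to $\Sigma$ viewed as a hard-constraint minimizer with $\ell=\mathcal{H}^1(\Sigma)$, as the paper does. Finiteness of noncut points then already excludes loops, as in \cref{cor:softpenaltyabsenceofloops}, without using \eqref{eq:alpha2}.
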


This generalizes the previous results \cite{Slepcev13}*{Lemmas 3.1-3.4} and \cite{Obrie25}*{Corollary 1.5}, which obtain \cref{theorem:introexistenceofanatomforsoftpenaltyminimizers} under the assumption that $\phi(t) = t^p$ for $p=1$ and $p \geq 2$, respectively. 

In \cref{sec:atomsandbarycentrefield}, we return to our study of the hard-constraint average distance problem \eqref{eq:averagedistanceproblemdefintion}. We begin by proving that existence of an atom implies that the barycentre field has nonzero $L^1(\nu)$-norm, which combined with \cref{theorem:introboundingmassofnoncutpoints} yields (see \cref{cor:nontrivialbarycentrefieldandatoms} for a more robust statement):

\begin{theorem}[Nontrivial barycentre field is equivalent to existence of an atom]\label{cor:intronontrivialbarycentrefieldandatoms}
    Assume $\phi \in C^{1,1}_{\mathrm{loc}}([0, \infty))$ is strictly increasing. Let $\Sigma \in \mathcal{S}_{\ell}$ be optimal, and let $\pi_{\Sigma} \in \Pi_{\Sigma}$. Then, $||\mathcal{B}_{\pi_{\Sigma}}||_{L^1(\nu)} \ne 0$ if and only if there exists some $\sigma \in \Sigma$ such that $\nu(\{\sigma\}) > 0$, where $\nu := (\pi_{\Sigma})_{\#}\mu$.
\end{theorem}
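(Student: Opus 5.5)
Both directions are handled separately; the forward one comes from the bound on noncut points, the reverse from a variational argument.

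\emph{Forward direction ($\Rightarrow$).} Suppose $\|\mathcal{B}_{\pi_{\Sigma}}\|_{L^1(\nu)} \neq 0$.
\begin{itemize}
\item Invoke \cref{theorem:introboundingmassofnoncutpoints}, with $\pi_\Sigma\in\Pi_\Sigma$ arbitrary. Its conclusion says every noncut point $\sigma^*$ satisfies $|\mathcal{B}_{\pi_\Sigma}(\sigma^*)|\,\nu(\sigma^*) \ge C\|\mathcal{B}_{\pi_\Sigma}\|_{L^1(\nu)} > 0$, hence $\nu(\{\sigma^*\})>0$.
\item It remains to produce a noncut point. Any compact connected set with $\mathcal{H}^1(\Sigma)<\infty$ containing at least two points has at least two noncut points. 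This is the classical continuum-theory fact that every nondegenerate continuum has at least two non-cut points.
\item If $\Sigma$ is a single point, then $\nu$ is a Dirac mass, so an atom exists trivially.
\item I need to check the hypotheses of \cref{theorem:introboundingmassofnoncutpoints}, in particular that $\mu$ does not charge sets of finite $\mathcal{H}^1$-measure. If the present statement does not assume this, handle it directly. When $\mu(\Sigma)>0$, the mass on $\Sigma$ is transported to itself and contributes nothing to $\mathcal{B}$. I would then rerun the quantitative form, \cref{theorem:boundingmassofnoncutpoints}, presumably stated robustly enough for this.
\end{itemize}

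\emph{Reverse direction ($\Leftarrow$).} Suppose $\nu(\{\sigma_0\})>0$ but $\mathcal{B}_{\pi_\Sigma}=0$ $\nu$-a.e. The aim is a contradiction with optimality, via a competitor using the same length.
\begin{itemize}
\item \emph{Step 1: the fibre geometry.} Let $A=\pi_\Sigma^{-1}(\sigma_0)$, so $\mu(A)>0$. The condition $\mathcal{B}(\sigma_0)=0$ means the weighted unit directions $\frac{x-\sigma_0}{|x-\sigma_0|}\phi'(|x-\sigma_0|)$ average to zero over $A$. Since $\phi$ is strictly increasing, $\phi'\ge0$ and $\phi'$ is not identically zero. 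The directions therefore cannot all lie in a half-space, unless all of the mass sits at $\sigma_0$ or sees $\phi'=0$. The degenerate case where $\phi'$ vanishes on the relevant radii needs separate care, possibly using $\phi(0)=0$ with strict monotonicity.
\item \emph{Step 2: the competitor.} Attach a short segment of length $\epsilon$ at $\sigma_0$ in a direction $e$ such that a positive $\mu$-mass of $A$ lies in a cone around $e$. By $C^{1,1}$ regularity of $\phi$, this gains
\[
\mathscr{J}(\Sigma)-\mathscr{J}(\Sigma\cup[\sigma_0,\sigma_0+\epsilon e]) \ge c\epsilon - O(\epsilon^2),
\]
with $c>0$. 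The gain is first order in $\epsilon$, because the atom has positive mass.
\item \emph{Step 3: paying for the segment.} To restore the length budget, remove or shorten length $\epsilon$ elsewhere. Here the vanishing of $\mathcal{B}$ enters. By the gradient property \cref{theorem:barycentregradient}, first-order variations of $\mathscr{J}$ under continuous perturbations $\Sigma\mapsto(\mathrm{id}+\epsilon V)(\Sigma)$ are $-\epsilon\int \mathcal{B}\cdot V\,d\nu=0$.
\item Contract a piece of $\Sigma$: take a Lipschitz map shrinking an arc near some non-atom region by a factor $(1-\epsilon)$. This keeps connectivity, lowers $\mathcal{H}^1$ by order $\epsilon$, and changes $\mathscr{J}$ only by $O(\epsilon^2)$. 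The $C^{1,1}_{\mathrm{loc}}$ hypothesis is what gives the second-order remainder bound.
\item Combining Steps 2 and 3 gives a competitor in $\mathcal{S}_\ell$ with strictly smaller cost, contradicting optimality.
\end{itemize}

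\emph{Main obstacle.} The hardest part is Step 3: constructing a length-decreasing deformation with controlled second-order error. The closest-point projection is not stable under deformation, so an upper bound $\mathscr{J}((\mathrm{id}+\epsilon V)\Sigma)\le \mathscr{J}(\Sigma)-\epsilon\int\mathcal{B}\cdot V\,d\nu + C\epsilon^2$ must be proved. The plan is to use the fixed projection $\pi_\Sigma$ as a competitor map, $\dist(x,(\mathrm{id}+\epsilon V)\Sigma)\le |x-\pi_\Sigma(x)-\epsilon V(\pi_\Sigma x)|$, and Taylor expand $\phi$ using the $C^{1,1}$ bound. One complication is points $x$ near $\Sigma$, where the unit vector is singular. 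These contribute $O(\epsilon^2)$ because $\phi'$ is bounded near $0$.
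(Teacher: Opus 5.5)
Your forward direction is the paper's argument: \cref{cor:boundingmassofnoncutpoints} together with the existence of two noncut points. Your reverse direction has the same skeleton as the paper's proof of \cref{prop:boundingscalingconstantintermsofatoms}: free $\epsilon$ of length at a cost whose first-order part is an integral against $\mathcal{B}_{\pi_\Sigma}$, then spend it at the atom for a gain of order $\epsilon$.

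The step that fails as written is Step~3. A map that shrinks \emph{an arc} by $(1-\epsilon)$ and is the identity away from it must be patched back to the identity, and the patch stretches. If the arc sits inside a straight piece of $\Sigma$, the image of that piece still joins the same two fixed endpoints, so no length is saved at all. You therefore have not produced a set in $\mathcal{S}_{\ell-\epsilon}$.

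The fix is the paper's choice: the homothety $\sigma\mapsto\sigma_0+(1-\epsilon/\ell)(\sigma-\sigma_0)$ centred at the atom. It is $(1-\epsilon/\ell)$-Lipschitz, so its image lies in $\mathcal{S}_{\ell-\epsilon}$. It fixes $\sigma_0$, so your segment stays attached. By \cref{prop:1generalfirstorderapproximationtheorem1} it costs $\frac{\epsilon}{\ell}\int_\Sigma(\sigma-\sigma_0)\cdot\mathcal{B}_{\pi_\Sigma}\,d\nu+o(\epsilon)=\frac{\epsilon}{\ell}\beta_\Sigma+o(\epsilon)$, which is $o(\epsilon)$ under your contradiction hypothesis. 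Alternatively, cutting out a small noncut-neighbourhood of a noncut point different from $\sigma_0$, as in \cref{theorem:boundingmassofnoncutpoints}, is a genuinely local way to recover length at cost $o(\epsilon)$ once $\mathcal{B}_{\pi_\Sigma}$ is trivial.

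Estimate the combined competitor through one map built from $\pi_\Sigma$: projection onto the segment on the cone part of $\pi_\Sigma^{-1}(\sigma_0)$, and the homothety composed with $\pi_\Sigma$ elsewhere. This is slightly cleaner than the paper, which treats the two modifications separately and then needs a Fatou argument comparing $\rho_{\pi_{\Sigma_\epsilon}}\{0\}$ with $\rho_{\pi_\Sigma}\{0\}$. Your segment in a cone works as well as the paper's cross $K_\epsilon$; since finitely many cones cover the sphere, the half-space discussion in Step~1 is unnecessary. The paper's direct quantitative form $\beta_\Sigma\ge\frac{\ell}{4d^{3/2}}\rho_{\pi_\Sigma}\{\sigma^*\}$ is what it reuses in \cref{lemma:boundingnoncutpointintermsofnoncutpoint}.

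Three further corrections.
\begin{enumerate}
\item The deformation cost is in general only $o(\epsilon)$, not $O(\epsilon^2)$. Points within $\delta$ of $\Sigma$ contribute $O(\epsilon\,\mu(B_\delta(\Sigma)))$ and the rest $O(\epsilon^2/\delta)$, and boundedness of $\phi'$ does not improve the first term. Fortunately $o(\epsilon)$ is all you need, and dominated convergence gives it with only $\phi\in C^1$.
\item You cannot handle the case $\mu(\Sigma)>0$ directly. The paper's proof uses $\mu(\Sigma)=0$, and that $\mu$ has no atoms, as standing hypotheses, and without them the equivalence is false. For example, if $\mu=\frac12(\delta_a+\delta_b)$ and $|a-b|\le\ell$, then $[a,b]$ is optimal and $\nu=\mu$ has atoms. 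Yet $x=\pi_\Sigma(x)$ on $\supp\mu$, so $\mathcal{B}_{\pi_\Sigma}$ vanishes or is undefined. Mass lying on $\Sigma$ also does contribute at first order to deformation costs when $\phi'(0)>0$, contrary to your remark that it contributes nothing.
\item Your worry about $\phi'$ vanishing on the relevant radii is legitimate. The paper handles it by asserting $\phi'(|x-\pi_\Sigma(x)|)>0$ for $\mu$-a.e.\ $x$. That holds when $\phi'>0$ on $(0,\infty)$, e.g.\ $\phi(t)=t^p$, but does not follow from strict monotonicity alone.
\end{enumerate}
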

With this characterization of existence of an atom in hand, we proceed in \cref{sec:existenceofatoms} to the main theorem of the paper, the existence of atoms for (hard-constraint) average distance minimizers (see \cref{theorem:existenceofatom}). 

\begin{theorem}[Minimizers have atoms]\label{theorem:introexistenceofatom}
     Assume $\phi \in C^{1,1}_{\mathrm{loc}}([0, \infty))$ is strictly increasing and satisfies \eqref{eq:alpha2}. Assume that $\mu(B_{\epsilon}(x)) = o(\epsilon)$ for each $x \in \mathbb{R}^d$. Then, any minimizer $\Sigma \in \mathcal{S}_{\ell}$ has an atom. 
\end{theorem}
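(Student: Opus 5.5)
We argue by contradiction, using the equivalence in \cref{cor:intronontrivialbarycentrefieldandatoms}. Suppose the minimizer $\Sigma$ has no atom, i.e.\ $\nu := (\pi_{\Sigma})_{\#}\mu$ has no atoms for every $\pi_\Sigma \in \Pi_\Sigma$. By \cref{cor:intronontrivialbarycentrefieldandatoms}, $\mathcal{B}_{\pi_\Sigma} = 0$ holds $\nu$-a.e. The assumption $\mu(B_\epsilon(x)) = o(\epsilon)$ implies that $\mu$ does not charge points. The goal is to show that a vanishing barycentre field is incompatible with optimality: we will find an admissible competitor that lowers $\mathscr{J}^\mu_\phi$ at fixed $\mathcal{H}^1$-budget.

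\textbf{Step 1: budget--value trade-off.} Since $\mathcal{B}=0$, first-order (continuous) deformations of $\Sigma$ do not decrease the functional. We therefore look at second-order and non-local modifications. The strategy is to recover length by cutting: remove a short terminal piece of length $\epsilon$ near a noncut point (an endpoint, which exists if there are no loops; otherwise cut out a small arc of a loop). The cost of this cut is controlled by the mass of $\mu$ projecting onto the removed piece. Using $\phi\in C^{1,1}_{\mathrm{loc}}$ together with the vanishing barycentre, we aim to show that this cost is $o(\epsilon)$. The heuristic is that the linear term in the change of $\mathscr{J}$ is an integral of $\mathcal{B}$ against the displacement, and this vanishes. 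The quadratic remainder is $O(\epsilon^2)$ times mass, plus a contribution from points within distance $\epsilon$ of the removed piece, which is $o(\epsilon)$ by the hypothesis $\mu(B_\epsilon(x))=o(\epsilon)$ applied along the piece.

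\textbf{Step 2: spending the recovered budget.} Reinvest the $\epsilon$ of length to obtain a decrease of at least $c\epsilon$ with $c>0$. We would do this by attaching a segment of length $\epsilon$ at some point $\sigma_0\in\Sigma$, pointing into a direction where the nearby mass is substantial. Choose $\sigma_0$ and a direction $e$ such that the set $\{x : \pi_\Sigma(x)\in B_r(\sigma_0),\ \langle x-\pi_\Sigma(x), e\rangle \geq c|x-\pi_\Sigma(x)|\}$ has positive measure. Such a choice exists because $\mu\neq 0$ off $\Sigma$, which follows from the hypothesis that $\mu$ does not charge $1$-dimensional sets. Condition \eqref{eq:alpha2}, together with strict monotonicity of $\phi$, should guarantee that every such $x$ gains $\gtrsim \epsilon\,\phi'$-order in cost. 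Note that this gain is not a continuous deformation, so it is not cancelled by $\mathcal{B}=0$: pointwise, $x$ benefits from the new segment whenever it lies in the favourable cone. The aggregate is therefore a gain of order $\epsilon$, not a signed integral that could cancel.

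\textbf{Step 3: conclude.} The net change is $o(\epsilon)-c\epsilon<0$ for small $\epsilon$, and the modified set remains connected with $\mathcal{H}^1\le\ell$. This contradicts optimality, so $\Sigma$ has an atom.

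The main obstacle is Step 1: showing that the cut costs only $o(\epsilon)$. For $x$ projecting near the cut, the new distance is not governed by a smooth deformation, so it is not obvious that $\mathcal{B}=0$ kills the linear term. I would first try to choose the noncut point and the cut location by an averaging argument over many candidate cut positions. Concretely, I would integrate the cut cost over a family of $\epsilon$-pieces along an arc of $\Sigma$, relate the average to $\int|\mathcal{B}|\,d\nu=0$ plus an error controlled by $\sup_x\mu(B_\epsilon(x))/\epsilon\to0$, and then select a piece whose cost is below average.
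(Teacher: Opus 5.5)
Your Step 2 fails, and this is the real gap. Step 1, at the $o(\epsilon)$ level you ask for, is actually easy.

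Under your standing hypothesis that $\nu$ is atomless, no connected piece $S$ of length $\epsilon$ attached at a point $\sigma_0\in\Sigma$ can lower $\mathscr{J}$ by $c\epsilon$ with $c>0$ fixed. Such an $S$ lies in $\overline{B}_\epsilon(\sigma_0)$, so $\dist(x,S)\geq |x-\sigma_0|-\epsilon\geq \dist(x,\Sigma)-\epsilon$. Hence each $x$ gains at most a constant times $\epsilon$, and it gains anything at all only if $|x-\sigma_0|<\dist(x,\Sigma)+\epsilon$. As $\epsilon\to 0$ these sets decrease to $\{x : |x-\sigma_0|=\dist(x,\Sigma)\}$, whose $\mu$-measure is $\nu(\{\sigma_0\})=0$ by \cref{cor:uniquenessofclosestpointprojections}. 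So the total gain is $o(\epsilon)$.

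Your pointwise ``cone'' claim is where the argument breaks. A point $x$ in the cone whose projection lies at distance comparable to $r$ from $\sigma_0$ typically has $|x-\sigma_0|-\dist(x,\Sigma)\gg\epsilon$, and it gains nothing from a segment of length $\epsilon$. By the computation above, a gain linear in the added budget from a local addition at $\sigma_0$ forces $\nu(\{\sigma_0\})>0$; compare \cref{prop:boundingscalingconstantintermsofatoms}, where a cross at $\sigma^*$ gains $\frac{\epsilon}{4d^{3/2}}\rho\{\sigma^*\}+o(\epsilon)$. So Step 2 is essentially the statement you are trying to prove. Condition \eqref{eq:alpha2} cannot supply it, because the obstruction is the vanishing mass of points that benefit, not the size of increments of $\phi$.

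Conversely, your Step 1 worry is unfounded at this order. Removing a connected noncut neighbourhood $B_n$ costs at most a constant times $\diam(B_n)\,\nu(B_n)=o(\mathcal{H}^1(B_n))$, simply because $\nu$ is atomless; no averaging is needed. Both loss and gain are therefore $o(\epsilon)$, and the comparison ``$o(\epsilon)-c\epsilon$'' in Step 3 is not available.

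The missing idea is to play loss against gain at second order, roughly $\epsilon\cdot\rho(\text{neighbourhood})$. The paper does this in four steps.
\begin{enumerate}
\item With trivial barycentre field, \cref{prop:boundinghigherorderestimatescase1} bounds the loss from removing $B_n$ by $\frac{\epsilon_n^2}{2\delta}\rho(B_n)+2M\epsilon_n\mu(B_\delta(\Sigma)\cap\pi_\Sigma^{-1}(\overline{B}_n))+o(\epsilon_n^2)$.
\item \cref{lemma:estimatingimprovementfromddimcross} shows that a $d$-dimensional cross placed at a \emph{second} noncut point $0$ gains at least $\frac{\epsilon_n}{8d^{3/2}}\rho(B_{\epsilon_n}(0))+o(\epsilon_n^2)$.
\item One chooses which of the two noncut points to cut, so that $\rho(B_{\epsilon_n}(0))\geq \frac{C}{2}\rho(B_n)$ along a subsequence. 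Taking $\delta=\frac{8d^{3/2}}{C}\epsilon_n$ absorbs the first loss term. The hypothesis $\mu(B_\epsilon(x))=o(\epsilon)$ makes the near-$\Sigma$ term $o(\epsilon_n^2)$.
\item One needs $\rho(B_{\epsilon_n}(0))\gtrsim\epsilon_n$. This is \cref{lemma:noncutneighbourhoodshavepositivemass}, and it is where \eqref{eq:alpha2} genuinely enters, via Paolini--Stepanov's $\epsilon^2$-improvement lemma.
\end{enumerate}
Without the mass comparison between two noncut points and the linear lower bound on their neighbourhood masses, the second-order loss and gain cannot be compared. Your plan contains neither ingredient.
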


In particular, \cref{theorem:introexistenceofatom} holds for $\phi(t) = t^p$ with $p =1$ or $p \geq 2$. \cref{theorem:introexistenceofatom} generalizes \cite{Obrie25}*{Theorem 3.5}, which proves nontriviality of the barycentre field (and hence existence of an atom) in the case $\phi(t) =t^p$ for $p = 2$ or $p > \frac{1}{2}(3 + \sqrt{5})$. In order to achieve this generalization, we provide a local argument for existence of an atom, in which we compare the mass of two non-atomic noncut points, then use the fact that the barycentre field is trivial to obtain estimates contradicting the minimality of $\Sigma$. This avoids the use of global perturbations as in the proof of \cite{Obrie25}*{Theorem 3.5}, allowing for considerable simplifications. Additionally, \cref{theorem:introexistenceofatom} implies the complete topological description due to Stepanov's conditional result \cite{Stepanov06}*{Theorem 5.5}, see \cref{theorem:stepanovtopologicalcharacterization}.

\begin{theorem}[Complete topological description]\label{theorem:introtopologicalcharacterization}
   Assume $\phi\in C^{1,1}_{\mathrm{loc}}([0, \infty))$ is strictly increasing and satisfies the condition \eqref{eq:alpha2}. Assume that $\mu(B_{\epsilon}(x)) = o(\epsilon)$ for each $x \in \mathbb{R}^d$. Then, any minimizer $\Sigma \in \mathcal{S}_{\ell}$ satisfies the following:
    \begin{enumerate}
        \item $\Sigma$ does not contain any homeomorphic images of $\mathbb{S}^1$, and in particular every noncut point of $\Sigma$ is an endpoint,
        \item the number of endpoints of $\Sigma$ is finite, and
        \item there are finitely many branching points of $\Sigma$, all of which are triple junctions.
    \end{enumerate}
    In particular, the above hold if we take $\phi(t) =t^p$ for $p=1$ or $p \geq 2$. 
\end{theorem}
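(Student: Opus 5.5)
The plan is to obtain \cref{theorem:introtopologicalcharacterization} as a direct consequence of \cref{theorem:introexistenceofatom} combined with Stepanov's conditional result \cite{Stepanov06}*{Theorem 5.5} (recorded as \cref{theorem:stepanovtopologicalcharacterization}). Stepanov shows that a minimizer possessing an atom satisfies (1)--(3), provided $\phi$ and $\mu$ satisfy suitable regularity hypotheses. So the work consists of checking that the present hypotheses imply his.

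First, I would fix a minimizer $\Sigma\in\mathcal{S}_\ell$ and apply \cref{theorem:introexistenceofatom}. This is legitimate because $\phi\in C^{1,1}_{\mathrm{loc}}$ is strictly increasing, satisfies \eqref{eq:alpha2}, and $\mu(B_\epsilon(x))=o(\epsilon)$. The output is some $\sigma_0\in\Sigma$ with $\nu(\{\sigma_0\})>0$. The mechanism Stepanov exploits is the following. Lengthening $\Sigma$ by a short segment of length $\epsilon$ from $\sigma_0$, in a direction along which the mass of $\pi_\Sigma^{-1}(\sigma_0)$ lies, decreases $\mathscr J^\mu_\phi$ by at least $c\epsilon$. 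Any local modification that saves length at a rate faster than its cost then yields a strictly better competitor.

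Second, I would verify Stepanov's remaining hypotheses.

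\begin{itemize}
\item Absence of loops is already known in general dimension \cite{Stepanov04}*{Theorem 5.6}. Alternatively, a loop can be cut near a point of small projected mass and the freed length spent at $\sigma_0$; the hypothesis $\mu(B_\epsilon(x))=o(\epsilon)$ makes the cost of the cut $o(\epsilon)$.
\item The hypothesis $\mu(B_\epsilon)=o(\epsilon)$ implies that $\mu$ charges no set of finite $\mathcal H^1$-measure, at least locally. This is what Stepanov needs to rule out mass sitting on $\Sigma$.
\item Condition \eqref{eq:alpha2} on $\phi$ is precisely the growth/convexity condition Stepanov uses to compare the cost of shortening $\Sigma$ near an endpoint or a branching point with the gain at the atom.
\end{itemize}

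Once no loops are present, $\Sigma$ is a dendrite, so noncut points are endpoints. This gives the second clause of (1).

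Third, for (2) I would cite \cref{theorem:introboundingmassofnoncutpoints}. By \cref{cor:intronontrivialbarycentrefieldandatoms}, the atom gives $\|\mathcal B_{\pi_\Sigma}\|_{L^1(\nu)}\neq0$, so every noncut point carries $\nu$-mass bounded below by a fixed positive constant, and since $\nu$ is a probability measure there are finitely many. For (3), a compact dendrite with finitely many endpoints has finitely many branching points. The triple-junction property (no junction of order $\ge4$, with $120^\circ$ angles) comes from Stepanov's Steiner-type local competitor: splitting a higher-order junction saves length of order $\epsilon$ at cost $o(\epsilon)$, and that length is reinvested at the atom.

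The main obstacle I expect is the second step: matching hypotheses precisely, in particular confirming that Stepanov's theorem, stated in his framework, applies to general $\phi$ satisfying \eqref{eq:alpha2} rather than only to $\phi(t)=t$. If it does not, I would redo his local competitor estimates directly, using the $C^{1,1}_{\mathrm{loc}}$ bound on $\phi$ to control second-order errors. The final clause then follows because $t^p$ with $p=1$ or $p\ge2$ satisfies all hypotheses, as noted after \cref{theorem:introexistenceofatom}.
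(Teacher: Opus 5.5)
Your proposal is correct and takes the same route as the paper: the paper's proof is the single observation that \cref{theorem:existenceofatom} supplies the atom required by \cite{Stepanov06}*{Theorem 5.5}, which then gives (1)--(3). Note that \cref{theorem:stepanovtopologicalcharacterization} is the paper's own restatement of this result rather than a record of Stepanov's conditional theorem, and your separate derivation of (2) via \cref{cor:nontrivialbarycentrefieldandatoms} and \cref{theorem:introboundingmassofnoncutpoints} is valid but redundant, since Stepanov's theorem already covers it once an atom exists.
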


While (1) was shown to hold previously in \cite{Stepanov04}, \cref{theorem:introtopologicalcharacterization} (2)-(3) generalizes the previous topological characterization given in \cite{Obrie25}. In particular, \cref{theorem:introtopologicalcharacterization} applies to the prototypical case $\phi(t) = t$ of the hard-constraint average distance problem; in this case, \cref{theorem:introtopologicalcharacterization} (3) resolves for the first time the conjecture \cite{Buttazzo02}*{Problem 3.3} in all dimensions $d \geq 2$. Additionally, we hope that our methods may help to understand the topological properties of minimizers of the recently introduced \textit{Wasserstein-}$\mathcal{H}^1$ \textit{problem} \cites{Chambolle23, Machado25}, which has many interesting parallels with the average distance problem.  

Finally, in \cref{sec:branchingrates}, we provide a quantitative version of \cref{theorem:introtopologicalcharacterization}(2) by providing an upper bound on the number of endpoints of an average distance minimizer in terms of the budget $\ell$ and its barycentre field in \cref{prop:initialboundonbranchingrate}.

\subsection{Notation}\label{sec:notation}

Before proceeding, we will record a list of the notation used throughout this paper for ease of reference. For the remainder of the paper, we fix a dimension $d \geq 2$, a compactly supported Borel probability measure $\mu$ on $\mathbb{R}^d$, and a function $\phi: [0, \infty) \to \mathbb{R}$ satisfying $\phi(0) = 0$. Typically, we will assume that $\phi \in C^{1}([0, \infty))$; this means that $\phi'$ is a continuous function on $(0, \infty)$, and that the limit of $\phi'(t)$ as $t \to 0^+$ exists and is a real number. In particular, this assumption excludes the functions $\phi(t) = t^p$ for $p < 1$. We will often assume that $\phi$ satisfies Paolini and Stepanov's condition \eqref{eq:alpha2} from \cite{Stepanov04}:
\begin{quote}
     For every $c > 0$, there is $\lambda = \lambda(c) > 0$ such that 
        \begin{equation}\tag{$\alpha_2$}
            |\phi(s) - \phi(t)| \geq \lambda |s - t|
        \end{equation}
        for any $s, t \in [c, \diam \supp \mu]$,
 \end{quote}
 where $\diam \supp \mu$ is the diameter of the support of $\mu$.
 In particular, \eqref{eq:alpha2} holds for $\phi(t) = t^p$ for all $p \geq 1$.

For a set $\Sigma \subseteq \mathbb{R}^d$, we define the \textit{distance to $\Sigma$} to be
\[
\dist(x, \Sigma) := \inf_{\sigma \in \Sigma}d(x, \sigma),
\]
where $d(\cdot, \cdot)$ is the Euclidean distance. Then, recall that the \textit{average distance functional} \eqref{eq:averagedistancefunctionaldef} is defined for compact sets $\Sigma \subseteq \mathbb{R}^d$ to be 
\[
    \mathscr{J}(\Sigma) = \mathscr{J}_{\phi}^{\mu}(\Sigma) := \int_{\mathbb{R}^d}\phi(\dist(x, \Sigma)) d\mu(x);
\]
reference to $\mu$ and $\phi$ in the notation will typically be suppressed. Given some $s \geq 0$, we denote the \textit{Hausdorff $s$-measure} by $\mathcal{H}^s$. For $\ell > 0$, we define the constraint set 
\[
    \mathcal{S}_{\ell} := \{\Sigma \subseteq \mathbb{R}^d \ | \ \Sigma \text{ is compact, connected, and }\mathcal{H}^1(\Sigma) \leq \ell \},
\]
and we define $\mathcal{S}:= \bigcup_{\ell \geq 0} \mathcal{S}_{\ell}$. Then the \textit{(hard-constraint) average distance problem} \eqref{eq:averagedistanceproblemdefintion} asks us to find the sets $\Sigma_{\mathrm{opt}} \in \mathcal{S}_{\ell}$ such that
\[
    \mathscr{J}^{\mu}_{\phi}(\Sigma_{\mathrm{opt}}) = \inf_{\Sigma \in \mathcal{S}_{\ell}}\mathscr{J}^{\mu}_{\phi}(\Sigma).
\]
Given a solution $\Sigma_{\mathrm{opt}}$ of the average distance problem, we refer to $\mathscr{J}_{\phi}^{\mu}(\Sigma_{\mathrm{opt}})$ as the \textit{objective value of }$\Sigma_{\mathrm{opt}}$, and we will write
\begin{equation}\label{eq:jldefinition}
    j(\ell) := \inf_{\Sigma \in \mathcal{S}_{\ell}}\mathscr{J}(\Sigma)
\end{equation}
for the minimum value in the average distance problem.  
In \cref{sec:softpenatlyatom}, we will also consider the \textit{soft-penalty average distance problem} \eqref{eq:softpenaltyadp}. For $\lambda > 0$, we define the \textit{soft-penalty average distance functional} \eqref{eq:softpenaltyaveragedistancefunctional}
    \[(\mathscr{J}^{\mu}_{\phi})^{\lambda}(\Sigma) := \mathscr{J}^{\mu}_{\phi}(\Sigma) + \lambda \mathcal{H}^1(\Sigma),
\]
for $\Sigma \subseteq \mathbb{R}^d$ compact, then the \textit{soft-penalty average distance problem} asks us to find $\Sigma_{\lambda} \in \mathcal{S}$ satisfying
\eqref{eq:softpenaltyadp}\[    (\mathscr{J}_{\phi}^{\mu})^{\lambda}(\Sigma_{\lambda}) = \inf_{\Sigma \in \mathcal{S}}(\mathscr{J}_{\phi}^{\mu})^{\lambda}(\Sigma).
\]
The soft-penalty average distance problem will only be studied in \cref{sec:softpenatlyatom}; \textit{the average distance problem}, \textit{average distance minimizers}, and \textit{solutions to the average distance problem} will always refer to the hard-constraint average distance problem \eqref{eq:averagedistanceproblemdefintion} and its solutions, unless otherwise specified. 

We denote
\[
\mathscr{F} = \{F : \mathbb{R}^d \to \mathbb{R}^d  \ | \ F\text{ is Borel measureable and } ||F||_{\infty} < \infty\},
\]
where 
\[
||F||_{\infty} = \inf\{a \in \mathbb{R} \ |\ \mu(|F|^{-1}(a, \infty)) = 0 \}.
\]
 For $F \in \mathcal{F}$, denote
\[
\mathcal{I}(F) := \{x \in \mathbf{R}^d \ | \ |F(x) - x| = 0\}. 
\]
Given a compact set $\Sigma \subseteq \mathbb{R}^d$, we define the set of \textit{closest-point projections onto }$\Sigma$ to be
\[
\Pi_{\Sigma} := \{F \in \mathscr{F} \ |\ \img(F) \subset \Sigma \text{ and } \dist(x, \Sigma) = d(x, F(x)) \text{ for all }x \in \mathbb{R}^d \},
\]
then $\Pi_{\Sigma} \ne \emptyset$ by \cref{lemma:closestpointprojectionexistence}. As explained in \cref{remark:extensionofaveragedistancefunctional}, it is useful to extend the average distance functional to take values in $\mathscr{F}$ by defining
\[
    \mathscr{J}(F) = \mathscr{J}_{\phi}^{\mu}(F) := \int_{\mathbb{R}^d}\phi(|x- F(x)|)d\mu(x).
\]
Given $F \in \mathscr{F}$, we will frequently consider the measure 
\[
\nu_F := F_{\#}\mu.
\]
When $F = \pi_{\Sigma}$ and $\pi_{\Sigma}$, $\Sigma$ are clear from context, we will typically suppress reference to $\pi_{\Sigma}$ and write $\nu = \nu_{\pi_{\Sigma}}$. In \cref{sec:boundingmassofnoncutpoints} and \cref{sec:atomsandbarycentrefield}, it will also be helpful to consider the measures \eqref{eq:gammadefinition}
\[
d\gamma_F := \phi'(|x - F(x)|)d\mu
\]
and \eqref{eq:rhodefinition}
\[
\rho_F := F_{\#}(\gamma_F),
\]
as these will frequently show up in bounds. When $F = \pi_{\Sigma}$ and $\pi_{\Sigma}$ is clear from context, we simply write $\gamma = \gamma_{\pi_{\Sigma}}$ and $\rho = \rho_{\pi_{\Sigma}}$. 
For $F \in \mathscr{F}$, we denote \eqref{eq:barycentrekernel}
\[
    \Delta_F(x) := \frac{(x - F(x))}{|x - F(x)|}\phi'(|x - F(x)|).
\]
Letting $(\nu_{F}, \{\lambda_{\sigma}\}_{\sigma \in \mathrm{img}(F)})$ be the disintegration fo $\mu$ by $F$, we define the \textit{barycentre field of }$F$ (\cref{def:barycentrefield})
\begin{align*}
    \mathcal{B}_{F}(\sigma) &:= \int_{F^{-1}\{\sigma\}}\frac{(x - F(x))}{|x - F(x)|}\phi'(|x - F(x)|)d\lambda_{\sigma}(x) \\
    &= \int_{F^{-1}\{\sigma\}} \Delta_F(x)d\lambda_{\sigma}(x).
\end{align*}

Given a connected set $\Sigma \subseteq \mathbb{R}^d$, we say a point $\sigma \in \Sigma$ is a \textit{cut point} if $\Sigma \setminus \{\sigma\}$ is not connected, otherwise $\sigma$ is called a \textit{noncut point}. We refer to a point $\sigma \in \Sigma$ as \textit{an atom of }$\Sigma$ if 
\[
\mu(\{x \in \mathbb{R}^d \ |\ \dist(x, \Sigma) = d(x, \sigma)\}) > 0. 
\]
If $\Sigma \in \mathcal{S}_{\ell}$ is a minimizer of the average distance problem and $\phi \in C^1([0, \infty))$ is strictly increasing, then the \textit{ambiguous locus} (see \cref{def:ambiguouslocus}) of $\Sigma$ is $\mu$-negligible by \cref{prop:negligibilityofambiguouslocus}, and thus all of the closest-point projections onto $\Sigma$ are equal $\mu$-a.e. (see \cref{cor:uniquenessofclosestpointprojections}). In particular, $\sigma \in \Sigma$ is an atom of $\Sigma$ if and only if $\nu_{\pi_{\Sigma}}\{\sigma\} > 0$, so we will use the terms “atom of $\Sigma$” and “atom of $\nu$” for an average distance minimizer interchangeably. 

Given a Borel $A \subseteq \mathbb{R}^d$, we write $\chi_{A}$ for the indicator function of $A$, that is
\[
\chi_{A}(x) = \bigg \{ \begin{matrix}
    1, & x \in A, \\
    0, & x \notin A.
\end{matrix}
\]
Often, we will consider the integral of the function $\chi_{A}\phi(\dist(x, \Sigma))$ for some Borel $\Sigma \subseteq \mathbb{R}^d$; we denote this quantity by 
\begin{equation}\label{eq:restrictedadfnotation}
    \mathscr{J}|_{A}(\Sigma) =\mathscr{J}^{\mu}_{\phi}\mid_{A}(\Sigma) := \int_{A}\phi(\dist(x, \Sigma))d\mu(x),
\end{equation}
and define $\mathscr{J}|_A(F)$ for $F \in \mathscr{F}$ similarly.

\section{The barycentre field}\label{sec:thebarycentrefield}

In this section, we give a generalized definition of the \textit{barycentre field} originally introduced by Kobayashi, Hayase, and Kim in \cite{Kobayashi24}. We then establish the basic theory of the barycentre field for the average distance functional, generalizing the results of \cite{Obrie25}*{Section 2}, as well as providing some general approximation results for the average distance functional. 

\subsection{Preliminary results}\label{sec:preliminaryresults}

A crucial object which enables our study of the barycentre field are the \textit{closest-point projections} onto a compact set $\Sigma \subseteq \mathbb{R}^d$. Since the map
\[
x \mapsto \{\sigma \in \Sigma \ | \ d(x, \sigma) = \dist(x, \Sigma)\}
\]
is in general multivalued, we recall the foundational result \cite{Bertsekas78}*{Proposition~7.33} which allows us to take \textit{measurable selections} of this many-valued function.

\begin{lemma}[Measurable selection, \cite{Bertsekas78}*{Proposition~7.33}]\label{lemma:measurableselection} Let $X$ be a metrizable space, $Y$ a compact metrizable space, $D$ a closed subset of $X \times Y$, and let $f : D \to \mathbb{R} \cup \{- \infty, \infty\}$ be lower semicontinuous. Let $f^*: \pi_X(D) \to \mathbb{R}\cup \{- \infty, \infty\}$ be given by
  \[
    f^*(x) = \min_{(x, y) \in \pi_X^{-1}\{x\} \cap D}f(x, y),
  \]
  where $\pi_X: X \times Y \to X, (x, y) \mapsto x$ is the projection. Then, $\pi_X(D)$ is closed in $X$, $f^*$ is lower semicontinuous, and there exists a Borel-measurable function $\varphi: \pi_X(D) \to Y$ such that $\{(x, \varphi(x)) \ | \ x \in \pi_X(D)\} \subseteq D$ and $f(x, \varphi(x)) = f^*(x)$ for all $x \in \pi_X(D)$.
\end{lemma}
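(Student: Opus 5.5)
Since this is the measurable selection theorem of Bertsekas--Shreve, the plan is to prove it in three stages: closedness of $\pi_X(D)$, lower semicontinuity of $f^*$, and a Borel selector built from lexicographic minimization over a countable family of lower semicontinuous functions on $Y$. Two preliminary observations are used throughout. First, since $Y$ is compact, $\pi_X$ is a closed map; this is the tube lemma / Kuratowski theorem. Hence $\pi_X(D)$ is closed. Second, for each $x \in \pi_X(D)$ the fibre $D_x = \{y : (x,y) \in D\}$ is a nonempty compact set. A lower semicontinuous function attains its minimum on it, so $f^*(x)$ is well defined as a minimum.

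\emph{Lower semicontinuity of $f^*$.} Take $x_n \to x$ with $f^*(x_n) \to \liminf$, and choose $y_n \in D_{x_n}$ attaining the minimum. By compactness of $Y$, pass to a subsequence $y_n \to y$. Since $D$ is closed, $(x,y) \in D$. Lower semicontinuity of $f$ then gives $f^*(x) \le f(x,y) \le \liminf f(x_n,y_n)$.

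\emph{Borel selector.} Let $D^* = \{(x,y) \in D : f(x,y) = f^*(x)\}$. On $D$, $f$ is lower semicontinuous, and $f^*\circ \pi_X$ is lower semicontinuous as well, so $D^*$ is Borel. Its fibres $D^*_x$ are nonempty and compact, since each is the sublevel set $\{f(x,\cdot) \le f^*(x)\}$ intersected with the compact set $D_x$. To pick a point measurably:
\begin{enumerate}
\item Embed $Y$ into the Hilbert cube $[0,1]^{\mathbb N}$ via coordinate functions $h_k$.
\item Select lexicographically. Set $K_0(x) = D^*_x$ and let $K_k(x)$ be the set of minimizers of $h_k$ over $K_{k-1}(x)$.
\item Each $K_k(x)$ is nonempty and compact, and the intersection of the $K_k(x)$ is a single point $\varphi(x)$, because the $h_k$ separate points.
\item To show $\varphi$ is Borel, argue inductively that each map $x \mapsto \min_{K_{k-1}(x)} h_k$ is Borel. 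This step is cleanest if one first proves the whole statement with $D^*$ replaced by a closed set and $f$ lower semicontinuous. In that setting each successive minimization again produces a lower semicontinuous value function and a set $\{g_k \le m_k\circ\pi_X\}$ that is Borel with compact fibres, and the procedure can be iterated.
\end{enumerate}
Finally, $\varphi^{-1}(U)$ for open $U$ is expressed through these countably many Borel value functions.

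\emph{Main obstacle.} The delicate point is that the second-stage set $D^*$ is only Borel, not closed, so the lower semicontinuity argument does not directly iterate. I would handle this by working with the countable family $f, h_1, h_2, \dots$ simultaneously. The plan is to show by induction that the value functions $m_k$ are Borel, using that minimizing a lower semicontinuous function over a Borel set with compact fibres $\{(x,y) \in D : g_j(x,y) \le m_j(x), j<k\}$ is Borel. That set is the intersection of a closed set with sublevel sets of lower semicontinuous functions shifted by Borel functions. Its fibres are compact, and a compact-fibre Borel set can be treated as an upper semicontinuous compact-valued map composed with Borel data, to which the Kuratowski--Ryll-Nardzewski selection theorem applies.

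Alternatively, one could simply invoke Kuratowski--Ryll-Nardzewski after showing the multifunction $x \mapsto D^*_x$ is weakly measurable. For this it suffices that $\{x : D^*_x \cap F \ne \emptyset\}$ is Borel for closed $F$. That set equals $\{x : \min_{D_x\cap F} f = f^*(x)\}$, a set where two lower semicontinuous (hence Borel) functions agree, so it is Borel. I expect to take this shorter route.
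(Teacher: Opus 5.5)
Your argument is correct, but it is not the paper's route. The paper gives no proof of this lemma and simply imports it from Bertsekas--Shreve, Proposition 7.33. You instead derive it with the Kuratowski--Ryll-Nardzewski (KRN) theorem as the only black box.

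The first two assertions follow from the tube lemma and sequential compactness of $Y$, exactly as you describe. The key step of your final route is also sound once it is spelled out. Fix a closed set $F\subseteq Y$. Since $f(x,\cdot)$ attains its minimum on the compact set $D_x\cap F$, and that minimum is always at least $f^*(x)$, one has $D^*_x\cap F\neq\emptyset$ if and only if $x\in\pi_X(D\cap(X\times F))$ and $\min_{D_x\cap F}f(x,\cdot)=f^*(x)$. Now apply your first two parts to the closed set $D\cap(X\times F)$. This shows the hitting set is the coincidence set of two lower semicontinuous, hence Borel, functions defined on closed subsets of $X$, so it is Borel. Open subsets of $Y$ are countable unions of closed sets, so $x\mapsto D^*_x$ is weakly measurable with nonempty compact values. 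KRN allows an arbitrary measurable space as domain, so it produces the Borel selector without any separability of $X$.

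What the citation buys the paper is brevity. What your version buys is a transparent account of where compactness of $Y$ enters: closedness of $\pi_X$, attainment of minima, and compactness of the argmin fibres.

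You should drop the exploratory ``main obstacle'' paragraph. Treating an arbitrary Borel set with compact fibres as a measurable compact-valued map is not justified as stated. The general results of that type concern Polish spaces, while $X$ is only assumed metrizable.

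If you prefer to avoid KRN, the same hitting-set computation makes your lexicographic construction work directly. By induction, each set $\{x : K_k(x)\cap F\neq\emptyset\}$ is Borel for closed $F$, because $\{x: \min_{K_{k-1}(x)\cap F}h_k \le c\}=\{x : K_{k-1}(x)\cap F\cap\{h_k\le c\}\neq\emptyset\}$. Then $\varphi^{-1}(F)=\bigcap_k\{x : K_k(x)\cap F\neq\emptyset\}$ by compactness.
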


Using measurable selection, we may establish the existence of closest-point projections onto a compact set $\Sigma$.

\begin{lemma}[Existence of closest-point projection]\label{lemma:closestpointprojectionexistence} For any compact $\Sigma \subseteq \mathbb{R}^d$ with $\Sigma \ne \emptyset$, there exists a Borel measurable map $\pi_{\Sigma}: \mathbb{R}^d \to \Sigma$ such that
  \begin{align*}
    \hbox{$\dist(x, \Sigma) = | x - \pi_{\Sigma}(x)|$ for all $x \in \mathbb{R}^d$.}
  \end{align*}
\end{lemma}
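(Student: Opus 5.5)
We apply \cref{lemma:measurableselection} with $X = \mathbb{R}^d$ (metrizable), $Y = \Sigma$ (compact metrizable, since $\Sigma$ is a nonempty compact subset of $\mathbb{R}^d$), and $D = X \times Y = \mathbb{R}^d \times \Sigma$, which is closed in $X \times Y$. We take $f : D \to \mathbb{R}$ to be $f(x, y) := |x - y|$. This is continuous, hence lower semicontinuous.

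With these choices $\pi_X(D) = \mathbb{R}^d$. For each $x$ we have
\[
f^*(x) = \min_{y \in \Sigma}|x - y| = \dist(x, \Sigma),
\]
and the minimum is attained because $y \mapsto |x-y|$ is continuous on the compact nonempty set $\Sigma$. This attainment is exactly what the lemma's use of "min" requires.

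\cref{lemma:measurableselection} then gives a Borel map $\varphi : \mathbb{R}^d \to \Sigma$ with $(x, \varphi(x)) \in D$ and $|x - \varphi(x)| = f^*(x) = \dist(x,\Sigma)$ for every $x \in \mathbb{R}^d$. Setting $\pi_\Sigma := \varphi$ proves the claim. Viewed as a map into $\mathbb{R}^d$, $\pi_\Sigma$ stays Borel, because the inclusion $\Sigma \hookrightarrow \mathbb{R}^d$ is continuous. It is also bounded, since $\Sigma$ is compact, so $\pi_\Sigma \in \Pi_\Sigma \subseteq \mathscr{F}$.

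There is no real obstacle here. The only points needing care are that $\Sigma$ is nonempty and compact, so that the minimum defining $f^*$ exists, and that the hypotheses of \cref{lemma:measurableselection} (closedness of $D$, compactness of $Y$) are satisfied.
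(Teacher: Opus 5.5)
Your proof is correct and follows the paper's approach: both apply \cref{lemma:measurableselection} with the same $f(x,y)=|x-y|$. The only difference is the domain. You take $D=\mathbb{R}^d\times\Sigma$ and let the minimization pick a nearest point, whereas the paper takes $D=\{(x,\sigma)\in\mathbb{R}^d\times\Sigma : \dist(x,\Sigma)=|x-\sigma|\}$, on which $f$ is already constant along fibers. Your choice is marginally cleaner, since closedness of $D$ and $\pi_X(D)=\mathbb{R}^d$ become immediate rather than needing continuity of $\dist(\cdot,\Sigma)$ and attainment of the minimum.
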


\begin{proof}
  Consider the closed set 
  \[D = \{(x, \sigma) \in \mathbb{R}^d \times \Sigma \ | \ \dist(x, \Sigma) = |x - \sigma|\},\]
  and define $f: D \to \mathbb{R}$ by $f(x, \sigma) = |x - \sigma|$. Then, $f$ is continuous, and in particular lower semicontinuous. So, by measurable selection, there exists a Borel-measurable function $\pi_{\Sigma}: \mathbb{R}^d \to \Sigma$ such that $\dist(x, \Sigma) = |x - \pi_{\Sigma}(x)|$ for all $x \in \mathbb{R}^d$.
\end{proof}

\begin{definition}\label{def:closest-proj}
  We will refer a map satisfying the conclusion of \cref{lemma:closestpointprojectionexistence} as a \textit{closest-point projection} onto $\Sigma$. Given some compact and nonempty $\Sigma \subseteq \mathbb{R}^d$, we define the set of closest-point projections onto $\Sigma$ by
  \[
    \Pi_{\Sigma} := \{\pi : \mathbb{R}^d \to \Sigma \ | \ \pi \text{
      is measurable and }\dist(x, \Sigma) = |x - \pi(x)| \text{ for
      all }x \in \mathbb{R}^d \}.
  \]
  Then, by \cref{lemma:closestpointprojectionexistence}, we know that $\Pi_{\Sigma}$ is nonempty.
\end{definition}

Closest-point projections onto a compact set $\Sigma$ are in general \textit{not} unique $\mu$-a.e.; consider the case when $\Sigma = S^{d-1}$, and $\mu = \delta_0$ is a Dirac mass at the origin. This is not the case for average distance minimizers: in \cref{cor:uniquenessofclosestpointprojections}, we will use the barycentre field to prove that any two closest-point projections onto a minimizer of the average distance problem are equal $\mu$-a.e.

\subsection{A general first-order approximation}\label{sec:twogeneralapproximationtheorems}

A main theme throughout this paper is the problem of finding bounds on the difference 
\[
\mathscr{J}(\Sigma) - \mathscr{J}(\Sigma'),
\]
where $\Sigma, \Sigma' \in \mathcal{S}$. For the cases in which we are interested, $\Sigma'$ will typically be obtained from $\Sigma$ through a modification which changes the $\mathcal{H}^1$-measure by some small amount $\epsilon$. The examples to keep in mind are that of a \textit{continuous global modification}, in which $\Sigma' = (\mathrm{id} + \epsilon \xi)(\Sigma)$ for some continuous function $\xi$, and that of a \textit{local modification}, in which $\Sigma' = \Sigma \cup K_{\epsilon}$ or $\Sigma' = \Sigma \setminus K_{\epsilon}$ for some set $K_{\epsilon}$ of length $\mathcal{H}^1(K_{\epsilon}) = \epsilon$.

When trying to bound this difference, it is often easier to work with a well-chosen Borel function $F: \mathbb{R}^d \to \mathbb{R}^d$ such that $\mathrm{img}(F) \subset \Sigma'$ instead of a closest-point projection onto $\Sigma'$. For this reason, we will extend the average distance function to take values in the set
\[
    \mathscr{F} := \{F : \mathbb{R}^d \to \mathbb{R}^d \ | \ ||F||_{\infty} < \infty\},
\]
by defining
\[
    \mathscr{J}(F) = \mathscr{J}_{\phi}^{\mu}(F) := \int_{\mathbb{R}^d}\phi(|x- F(x)|)d\mu(x)
    \]
for $F \in \mathscr{F}$. 

\begin{remark}\label{remark:extensionofaveragedistancefunctional}
    Let $\Sigma \subset \mathbb{R}^d$ be compact, and suppose $F \in \mathscr{F}$ is such that $\img F \subset \Sigma$. Then, for any $\pi_{\Sigma} \in \Pi_{\Sigma}$, we have that 
    \[
    |x - F(x)| \geq \dist(x, \Sigma) = |x - \pi_{\Sigma}(x)|
    \]
    for all $x \in \mathbb{R}^d$. In particular, if $\phi$ is nondecreasing, then we have that 
    \begin{equation}\label{eq:projectionapproximationinequality}
    \mathscr{J}_{\phi}^{\mu}(F) \geq \mathscr{J}_{\phi}^{\mu}(\pi_{\Sigma}) = \mathscr{J}_{\phi}^{\mu}(\Sigma).
    \end{equation}
    Thus, in order to bound $\mathscr{J}(\Sigma) - \mathscr{J}(\Sigma')$ from below, it suffices to bound 
    \[
    \mathscr{J}(\pi_{\Sigma}) - \mathscr{J}(F)
    \]
    from below for some $F \in \mathscr{F}$ satisfying $\img F \subset \Sigma'$. This is why we are interested in studying the extension of the average distance functional to $\mathscr{F}$: in particular, our general first-order approximation result \cref{prop:1generalfirstorderapproximationtheorem1} will be stated in terms of elements of $\mathscr{F}$ instead of in terms of elements of $\mathcal{S}$. 
\end{remark}

Before proceeding to \cref{prop:1generalfirstorderapproximationtheorem1}, we will prove a preliminary lemma. Recall that when $\phi \in C^1([0, \infty))$ and $a \in (0, \infty)$, the mean value theorem gives the first-order approximation
\[
\frac{\phi(a + \epsilon) - \phi(a)}{\epsilon} = \phi'(a) + O(\epsilon).
\]
We wish to extend this approximation to the average distance functional; to do so we require a uniform version of this approximation. Given $F \in \mathscr{F}$, we denote
    \[
    \mathcal{I}(F) := \{x \in \mathbb{R}^d \ | \ F(x) = x\}.
    \]

  \begin{lemma}[Uniform mean value approximation]\label{lemma:phitechnicalapproximation}
      Assume 
      $\phi \in C^1([0, \infty))$. Let $\{F_n\}_{n \in \mathbb{N}} \subseteq \mathscr{F}$, and $F \in \mathscr{F}$ such that $||F_n - F||_{\infty} \to 0$ as $n \to \infty$. Then, for all $x \in \mathbb{R}^d \setminus (\mathcal{I}(F) \cup \bigcup_{n \in \mathbb{N}}\mathcal{I}(F_n))$, there exists a sequence $\{c_n^x\}_{n \in \mathbb{N}} \subseteq (0, \infty)$ such that $c_n^x \to |x - F(x)|$ as $n \to \infty$, and \begin{equation}\label{eq:phitechnicalapproximation}
          \begin{split}
              &\phi(|x - F(x)|) - \phi(|x - F_n(x)|) \\
              & \qquad = (F_n(x) - F(x))\cdot \frac{(x - F(x))}{|x - F(x)|}\phi'(c_n^x)  + O(||F_n - F||_{\infty}^2).
          \end{split}
      \end{equation}
      Moreover, there exists some uniform choice of $C > 0$ and $N \in \mathbb{N}$ such that for $\mu$-a.e. $x \in \mathbb{R}^d \setminus (\mathcal{I}(F) \cup \bigcup_{n \in \mathbb{N}}\mathcal{I}(F_n))$, $c_n^x \leq C$ for all $n \geq N$.
  \end{lemma}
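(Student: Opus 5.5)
Fix $x$ outside $\mathcal{I}(F) \cup \bigcup_n \mathcal{I}(F_n)$ and set $a := |x - F(x)| > 0$ and $a_n := |x - F_n(x)| > 0$. The plan has three steps: apply the mean value theorem to $\phi$ between $a_n$ and $a$, linearise the difference $a - a_n$ of norms, and then bound everything uniformly in $x$.

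\emph{Step 1 (mean value theorem).} Since $\phi \in C^1([0,\infty))$ and $a, a_n > 0$, the mean value theorem gives some $c_n^x$ between $a_n$ and $a$ with
\[
\phi(a) - \phi(a_n) = \phi'(c_n^x)(a - a_n).
\]
If $a_n = a$, simply take $c_n^x = a$. In either case $c_n^x > 0$. Moreover,
\[
|c_n^x - a| \leq |a_n - a| \leq |F_n(x) - F(x)| \leq \|F_n - F\|_\infty
\]
for $\mu$-a.e. $x$, so $c_n^x \to a$. (For every $x$ one can instead use the pointwise convergence along the sequence, or modify $F_n$ on a null set.)

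\emph{Step 2 (linearising the norm).} Write $v = x - F(x)$ and $h = F(x) - F_n(x)$, so that $x - F_n(x) = v + h$. We expand
\[
|v + h| - |v| = \frac{v}{|v|}\cdot h + R, \qquad R = \frac{|v+h|^2 - |v|^2}{|v+h| + |v|} - \frac{v\cdot h}{|v|}.
\]
A direct computation gives $|R| \leq 2|h|^2/|v|$; this is the usual second-order bound for the norm away from the origin. Hence
\[
a - a_n = (F_n(x) - F(x))\cdot \frac{v}{|v|} + O\big(|h|^2/a\big),
\]
and multiplying by $\phi'(c_n^x)$ yields \eqref{eq:phitechnicalapproximation}. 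The error term is $\phi'(c_n^x)\,O(|h|^2/a)$.

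\emph{Step 3 (uniform bounds on $c_n^x$ and $\phi'(c_n^x)$).} This is the main obstacle. First, the $c_n^x$ are uniformly bounded. Indeed, $\mu$ is compactly supported and $\|F\|_\infty < \infty$, so for $\mu$-a.e. $x$ we have $a \leq \sup_{\supp \mu}|x| + \|F\|_\infty =: M$. Choose $N$ with $\|F_n - F\|_\infty \leq 1$ for $n \geq N$. Then $c_n^x \leq M + 1 =: C$ for $\mu$-a.e. $x$ and all $n \geq N$; this proves the final assertion.

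Consequently $\phi'(c_n^x) \leq \sup_{[0,C]}|\phi'| < \infty$, because $\phi'$ extends continuously to $[0, C]$. This is exactly where the $C^1([0,\infty))$ assumption is used, and why $t^p$ with $p<1$ is excluded.

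The remaining difficulty is the factor $1/a$ in the remainder, which blows up as $x$ approaches $F(x)$. So the $O(\|F_n - F\|_\infty^2)$ term should be read pointwise in $x$, with an implicit constant depending on $x$ (like $1/|x - F(x)|$). That is all the statement requires for fixed $x$. If a uniform constant is wanted, I would avoid dividing by $a$. The alternative is to apply the mean value theorem to $\phi(|\cdot|)$ along the segment from $v$ to $v+h$, which gives the gradient $\phi'(|w|)\,w/|w|$ at an intermediate point $w$. Replacing $w/|w|$ by $v/|v|$ then costs $O(|h|/a)$, so pointwise-in-$x$ constants seem unavoidable. I would state them that way and treat them by dominated convergence in later uses.
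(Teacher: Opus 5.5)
Your proposal is correct and follows essentially the paper's route: the mean value theorem for $\phi$ between $|x-F_n(x)|$ and $|x-F(x)|$, then a second-order expansion of the norm whose implicit constant depends on $1/|x-F(x)|$ (the paper's $O(\|F_n-F\|_\infty^2)$ is equally pointwise in $x$), and finally the uniform bound $c_n^x\le C$ from the compact support of $\mu$ together with $\|F_n\|_\infty\le\|F\|_\infty+1$ for large $n$. Your explicit remainder estimate $|R|\le 2|h|^2/|v|$ is more careful than the paper, which asserts its conclusions for every $x$; so is your observation that $c_n^x\to|x-F(x)|$ is only guaranteed for $\mu$-a.e.\ $x$, since $\|\cdot\|_\infty$ is a $\mu$-essential supremum and $F_n(x)$ need not converge on a null set, so your suggested fix via pointwise convergence is not available and the statement should really read $\mu$-a.e.
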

  
  \begin{proof}
      Fix $x \in \mathbb{R}^d \setminus (\mathcal{I}(F) \cup \bigcup_{n \in \mathbb{N}}\mathcal{I}(F_n))$, and fix $n \in \mathbb{N}$. Since $x \notin \mathcal{I}(F) \cup \bigcup_{n \in \mathbb{N}}\mathcal{I}(F_n)$, we know that $|x - F(x)| \ne 0$ and $|x - F_n(x)| \ne 0$. Therefore, since $\phi\in C^1([0, \infty))$, we know by the mean value theorem that there exists $c_n^x \in [\min\{|x - F(x)|, |x - F_n(x)|\}, \max\{|x- F(x)|, |x - F_n(x)|\}]$ such that
      \[
      \phi(|x - F(x)|) - \phi(|x - F_n(x)|) = (|x - F(x)| - |x - F_n(x)|)\phi'(c_n^x).
      \]
       Since 
        \[
        \min\{|x - F(x)|, |x- F_n(x)|\} \leq c_n^x \leq \max\{|x - F(x)| , |x - F_n(x)|\}
        \]
        and $|x - F_n(x)| \to |x - F(x)|$ as $n \to \infty$, we see that $c_n^x \to |x - F(x)|$ as $n \to \infty$. 
        
        Now, notice that 
        \begin{align*}
            &|x - F_n(x)| \\
            & \quad = |x- F(x)| \left(1 - 2(F_n(x)- F(x))\cdot \frac{(x - F(x))}{|x - F(x)|^2} + \frac{|F(x) - F_n(x)|^2}{|x - F(x)|^2}\right)^{\frac{1}{2}} \\
            & \quad = |x - F(x)| - (F_n(x) - F(x))\cdot \frac{(x - F(x))}{|x - F(x)|} + O(||F_n - F||_{\infty}^2),
        \end{align*}
        and thus
        \begin{align*}
            \phi(|x - F(x)|) - \phi(|x - F_n(x)|) &= (F_n(x) - F(x))\cdot \frac{(x - F(x))}{|x - F(x)|}\phi'(c_n^x) \\
            & \qquad + \phi'(c_n^x)O(||F_n - F||_{\infty}^2).
        \end{align*}
        Since $\phi'$ is continuous on $[0, \infty)$, we have that 
        \[\lim_{n \to \infty}\phi'(c_n^x) = \phi'(|x - F(x)|),\]
        so we may absorb $\phi'(c_n^x)$ into the $O(||F_n - F||_{\infty}^2)$ term. This proves \eqref{eq:phitechnicalapproximation} holds. 

        Finally, notice that for each $x \in \supp \mu,$ we have
        \[
        |x - F_n(x)| \leq \diam \supp \mu + ||F_n||_{\infty}.
        \]
        Since $||F - F_n||_{\infty} \to 0$ as $n \to \infty$, we may take $N \in \mathbb{N}$ such that $||F_n||_{\infty} \leq ||F||_{\infty} + 1$ for all $n \geq N$. So, taking $C = \diam \supp \mu + ||F||_{\infty} + 1$, we see that $C$ and $N$ are independent of choice of $x$, and for all $x \in \supp \mu$, for all $n \geq N$, 
        \[
        c_n^x \leq \max\{|x- F(x)|, |x- F_n(x)|\}\leq C,
        \]
        as claimed.
  \end{proof}

For the sake of concision, given $F \in \mathscr{F}$, denote
\begin{equation}\label{eq:barycentrekernel}
    \Delta_F(x) := \frac{(x - F(x))}{|x - F(x)|}\phi'(|x - F(x)|).
\end{equation}
Now, we prove our general first-order approximation.
  
\begin{proposition}[General first-order approximation]\label{prop:1generalfirstorderapproximationtheorem1}
       Assume $\phi \in C^1([0, \infty))$. Let $\{F_n\}_{n \in \mathbb{N}} \subseteq \mathscr{F}$ and $F \in \mathscr{F}$. Suppose that $||F - F_n||_{\infty} \to 0$ as $n \to \infty$, and denote $\epsilon_n = ||F - F_n||_{\infty}$. Suppose that $\mu(\mathcal{I}(F)) = \mu(\mathcal{I}(F_n)) = 0$ and $\epsilon_n > 0$ for each $n \in \mathbb{N}$. Then, \begin{equation}\label{liminffirstorderapproximation}
            \liminf_{n \to \infty}\frac{\mathscr{J}(F) - \mathscr{J}(F_n)}{\epsilon_n} \geq \int_{\mathbb{R}^d} \liminf_{n \to \infty} \frac{(F_n(x) - F(x))}{\epsilon_n} \cdot \Delta_{F}(x) d\mu,
        \end{equation}
        and \begin{equation}\label{limsupfirstorderapproximation}
            \limsup_{n \to \infty}\frac{\mathscr{J}(F) - \mathscr{J}(F_n)}{\epsilon_n} \leq \int_{\mathbb{R}^d} \limsup_{n \to \infty} \frac{(F_n(x) - F(x))}{\epsilon_n} \cdot \Delta_{F}(x) d\mu.
        \end{equation}
    \end{proposition}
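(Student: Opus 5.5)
The plan is to prove both inequalities by a pointwise analysis of the difference quotient
\[
Q_n(x) := \frac{\phi(|x - F(x)|) - \phi(|x - F_n(x)|)}{\epsilon_n},
\]
combined with Fatou's lemma (for \eqref{liminffirstorderapproximation}) and reverse Fatou's lemma (for \eqref{limsupfirstorderapproximation}). Both rely on a \emph{uniform} bound on $Q_n$. Since $\mathscr{J}(F) - \mathscr{J}(F_n) = \int Q_n \epsilon_n \, d\mu$, the proposition follows once we show that $Q_n$ is uniformly bounded $\mu$-a.e. and that, for $\mu$-a.e. $x$, $Q_n(x)$ differs from $\frac{(F_n(x) - F(x))}{\epsilon_n}\cdot \Delta_F(x)$ by a quantity tending to $0$.

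\textbf{Step 1: uniform bound.} I will not use the $O(\|F_n - F\|_\infty^2)$ term of \cref{lemma:phitechnicalapproximation} for this. Its implicit constant behaves like $1/|x - F(x)|$, so it is not uniform in $x$, and this is the main obstacle to a naive dominated-convergence argument. Instead I use only the mean value part of that lemma:
\[
\phi(|x-F(x)|) - \phi(|x-F_n(x)|) = (|x-F(x)| - |x-F_n(x)|)\,\phi'(c_n^x),
\]
valid off the $\mu$-null set $\mathcal{I}(F)\cup\bigcup_n \mathcal{I}(F_n)$. By the triangle inequality, $\bigl||x-F(x)|-|x-F_n(x)|\bigr| \le |F(x)-F_n(x)| \le \epsilon_n$ for $\mu$-a.e. $x$, because $\|\cdot\|_\infty$ is a $\mu$-essential supremum. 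By the second part of \cref{lemma:phitechnicalapproximation}, $c_n^x \in (0, C]$ for $n \ge N$ and $\mu$-a.e. $x \in \supp\mu$. Since $\phi'$ is continuous on $[0,C]$, this gives $|Q_n(x)| \le M := \max_{[0,C]}|\phi'|$ $\mu$-a.e. for $n \ge N$. Because $\mu$ is a probability measure, the constant $M$ is integrable and serves as the dominating function for both Fatou arguments.

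\textbf{Step 2: pointwise asymptotics.} Fix $x$ outside the null set, and write $a = |x - F(x)| > 0$ and $u = (x-F(x))/a$. The expansion in the proof of \cref{lemma:phitechnicalapproximation} gives
\[
|x-F(x)| - |x-F_n(x)| = (F_n(x)-F(x))\cdot u + r_n(x), \qquad |r_n(x)| \le C_x \epsilon_n^2,
\]
where $C_x$ depends on $x$ through $a$. This non-uniformity is harmless here because $x$ is fixed. Therefore
\[
Q_n(x) = \frac{(F_n(x)-F(x))}{\epsilon_n}\cdot u\,\phi'(c_n^x) + o(1).
\]
The vector $(F_n(x)-F(x))/\epsilon_n$ has norm at most $1$, and $\phi'(c_n^x)\to\phi'(a)$ because $c_n^x \to a$ and $\phi'$ is continuous. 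Hence
\[
Q_n(x) - \frac{(F_n(x)-F(x))}{\epsilon_n}\cdot\Delta_F(x) \to 0 .
\]
Consequently $\liminf_n Q_n(x)$ and $\limsup_n Q_n(x)$ coincide with the $\liminf$ and $\limsup$ of $\frac{(F_n(x)-F(x))}{\epsilon_n}\cdot \Delta_F(x)$, where the $\liminf$/$\limsup$ are read in the scalar sense inside the dot product, as in the statement.

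\textbf{Step 3: conclusion.} Apply Fatou's lemma to $Q_n + M \ge 0$, which gives
\[
\liminf_n \int Q_n\, d\mu \ge \int \liminf_n Q_n \, d\mu ,
\]
and substitute Step 2 to obtain \eqref{liminffirstorderapproximation}. Applying Fatou's lemma to $M - Q_n \ge 0$ yields the reverse Fatou inequality, and hence \eqref{limsupfirstorderapproximation}. Measurability of the integrands follows because $F$ and the $F_n$ are Borel.
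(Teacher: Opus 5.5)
Your proof is correct and follows essentially the same route as the paper: you identify $\liminf_n Q_n$ pointwise via the mean value lemma, bound $Q_n$ uniformly $\mu$-a.e., and finish with a Fatou-type argument, where the paper applies dominated convergence to $\inf_{m\ge n}Q_m$, which amounts to the same thing. Your derivation of the uniform bound directly from the mean value form and $\bigl||x-F(x)|-|x-F_n(x)|\bigr|\le\epsilon_n$ is a cleaner justification than the paper's appeal to the lemma, whose $O(\|F_n-F\|_\infty^2)$ term is not uniform in $x$.
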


    \begin{proof}
        We will only verify equation (\ref{liminffirstorderapproximation}), as (\ref{limsupfirstorderapproximation}) follows by an analogous argument. By \cref{lemma:phitechnicalapproximation}, we have for each $x \in \mathbb{R}^d \setminus (\mathcal{I}(F) \cup \bigcup_{n \in \mathbb{N}}\mathcal{I}(F_n))$ that
        \[
        \liminf_{n \to \infty} \frac{\phi(|x - F(x)|) - \phi(|x - F_n(x)|)}{\epsilon_n} = \liminf_{n \to \infty} \frac{(F_n(x) - F(x))}{\epsilon_n}\cdot \Delta_{F}(x).
        \]
        Since $\mu(\mathcal{I}(F) \cup \bigcup_{n \in \mathbb{N}}\mathcal{I}(F_n)) = 0$ by assumption, we conclude that this equality holds $\mu$-a.e. Moreover, taking $C$ and $N$ as in \cref{lemma:phitechnicalapproximation}, we have that for $\mu$-a.e. $x \in \mathbb{R}^d$ and all $n \geq N$ that $c_n^x \leq C$. 
        Since $\phi'$ is continuous on $[0, \infty)$, we see that it attains a maximum $M$ on the interval $[0,C]$, so by Lemma \ref{lemma:phitechnicalapproximation} we have for all sufficiently large $n$ that 
        \[
        |\inf_{m \geq n}(\frac{\phi(|x - F(x)|) - \phi(|x - F_n(x)|)}{\epsilon_n})| \leq 2M
        \]
        for $\mu$-a.e. $x \in \mathbb{R}^d$. Since $\mu$ is finite, the constant function $x \mapsto 2M$ has bounded integral, so by the dominated convergence theorem we see that
        \begin{align*}   &\int_{\mathbb{R}^d}\liminf_{n \to \infty} \frac{(F_n(x) - F(x))}{\epsilon_n} \cdot \Delta_F(x) d\mu \\
        &\qquad = \lim_{n \to \infty} \int_{\mathbb{R}^d} \inf_{m \geq n}(\frac{\phi(|x - F(x)|) - \phi(|x - F_n(x)|)}{\epsilon_n})d\mu \\
        & \qquad \leq \liminf_{n \to \infty}\frac{\mathscr{J}(F) - \mathscr{J}(F_n)}{\epsilon_n},
        \end{align*}
        proving \eqref{liminffirstorderapproximation}.
    \end{proof}

\subsection{The barycentre field}\label{subsec:thebarycentrefield}

    We are now ready to introduce the barycentre field, and its fundamental property as the “negative gradient” of $\mathscr{J}$ under continuous perturbations.

    \begin{definition}[Barycentre field]\label{def:barycentrefield}
    Let $F \in \mathscr{F}$, and let $\nu_F = F_{\#}\mu$. Let $(\nu_F, \{\lambda_{\sigma}\}_{\sigma \in \img(F)})$ be the disintegration of $\mu$ by $F$. Then, we define the \textit{barycentre field of $F$} by

    \begin{align*}
        \mathcal{B}_{F}(\sigma) &:= \int_{F^{-1}\{\sigma\}}\frac{(x - F(x))}{|x - F(x)|}\phi'(|x - F(x)|)d\lambda_{\sigma}(x) \\ &= \int_{F^{-1}\{\sigma\}} \Delta_F(x)d\lambda_{\sigma}(x).
    \end{align*}
\end{definition}

Now, we generalize the relevant inequality in \cite{Kobayashi24}*{Theorem 4.8} and \cite{Obrie25}*{Proposition 2.11} to the case of nondecreasing $\phi \in C^{1}([0, \infty))$. A version of this result is also given in the case $\phi(t) = t$ for the soft-penalty average distance functional by Buttazzo, Manini, and Stepanov \cite{Buttazzo09}*{Theorem 2.1}. 
\begin{proposition}[Gradient interpretation of the barycentre field]\label{theorem:barycentregradient}
    Assume $\phi \in C^1([0, \infty))$ is nondecreasing.
    Let $\Sigma \subseteq \mathbb{R}^d$, and let $\xi: \Sigma \to \mathbb{R}^d$ be continuous with $||\xi||_{\infty} \leq 1$. For $\epsilon > 0$, define $\Sigma_{\epsilon, \xi} = \{\sigma + \epsilon \xi (\sigma) \ | \ \sigma \in \Sigma\}$. Suppose that $\mu(\Sigma) = 0$. Then, for every $\pi_{\Sigma} \in \Pi_{\Sigma}$,
\begin{equation}\label{eq:barycentregradient}
    \lim_{\epsilon \to 0^+} \frac{\mathscr{J}(\Sigma) - \mathscr{J}(\Sigma_{\epsilon, \xi})}{\epsilon} \geq   \int_{\Sigma} \xi(\sigma) \cdot \mathcal{B}_{\pi_{\Sigma}}(\sigma)d\nu_{\pi_{\Sigma}}(\sigma).
\end{equation}
\end{proposition}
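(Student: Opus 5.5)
The natural plan is to compare $\Sigma_{\epsilon,\xi}$ with the competitor map $F_\epsilon := (\mathrm{id} + \epsilon\xi)\circ \pi_{\Sigma}$, which lies in $\mathscr{F}$ and has $\img F_\epsilon \subset \Sigma_{\epsilon,\xi}$. Since $\phi$ is nondecreasing, \cref{remark:extensionofaveragedistancefunctional} gives $\mathscr{J}(\Sigma_{\epsilon,\xi}) \leq \mathscr{J}(F_\epsilon)$. Together with $\mathscr{J}(\Sigma) = \mathscr{J}(\pi_\Sigma)$, this yields
\[
\frac{\mathscr{J}(\Sigma) - \mathscr{J}(\Sigma_{\epsilon,\xi})}{\epsilon} \geq \frac{\mathscr{J}(\pi_\Sigma) - \mathscr{J}(F_\epsilon)}{\epsilon}.
\]
So it suffices to bound the $\liminf$ of the right-hand side from below along an arbitrary sequence $\epsilon_n \to 0^+$. (The limit on the left of \eqref{eq:barycentregradient} should be read as a $\liminf$ if its existence is not known; bounding the $\liminf$ gives the claim.)

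The main step applies \cref{prop:1generalfirstorderapproximationtheorem1} with $F = \pi_\Sigma$ and $F_n = F_{\epsilon_n}$, for which $F_n - F = \epsilon_n\, \xi\circ\pi_\Sigma$. The proposition normalizes by $\|F_n - F\|_\infty$ rather than by $\epsilon_n$. To handle this, I would set $\eta_n := \|F_n - F\|_\infty = \epsilon_n \|\xi\circ\pi_\Sigma\|_\infty$ and $c := \|\xi\circ\pi_\Sigma\|_\infty \leq 1$ (here $\|\cdot\|_\infty$ is the $\mu$-essential sup).
\begin{itemize}
\item If $c = 0$, then $\xi\circ\pi_\Sigma = 0$ $\mu$-a.e. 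Hence $\mathscr{J}(F_n) = \mathscr{J}(\pi_\Sigma)$, and the right-hand side of \eqref{eq:barycentregradient} is also zero.
\item If $c > 0$, then $(F_n - F)/\eta_n = \xi\circ\pi_\Sigma / c$ pointwise. Multiplying the conclusion \eqref{liminffirstorderapproximation} by $c$ gives
\[
\liminf_n \frac{\mathscr{J}(\pi_\Sigma)-\mathscr{J}(F_n)}{\epsilon_n} \geq \int \xi(\pi_\Sigma(x))\cdot\Delta_{\pi_\Sigma}(x)\,d\mu(x).
\]
\end{itemize}
Disintegrating $\mu$ along $\pi_\Sigma$, i.e. integrating first in $\lambda_\sigma$ and then in $\nu$, turns this integral into $\int_\Sigma \xi(\sigma)\cdot\mathcal{B}_{\pi_\Sigma}(\sigma)\,d\nu(\sigma)$, because $\xi(\pi_\Sigma(x)) = \xi(\sigma)$ is constant on each fibre. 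This step needs $\Delta_{\pi_\Sigma}$ to be bounded and measurable, which holds since $\phi'$ is bounded on $[0,\diam\supp\mu + \|\pi_\Sigma\|_\infty]$.

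The main obstacle is verifying the hypotheses of \cref{prop:1generalfirstorderapproximationtheorem1}, specifically $\mu(\mathcal{I}(F)) = \mu(\mathcal{I}(F_n)) = 0$. The set $\mathcal{I}(\pi_\Sigma)$ equals $\Sigma$, which is $\mu$-null by assumption. The set $\mathcal{I}(F_n) = \{x : x = \pi_\Sigma(x) + \epsilon_n\xi(\pi_\Sigma(x))\}$ is contained in $\Sigma_{\epsilon_n,\xi}$, which need not be $\mu$-null. I would try two fixes in order.
\begin{itemize}
\item First, avoid the issue: the points of $\mathcal{I}(F_n)$ only matter through the mean value expansion in \cref{lemma:phitechnicalapproximation}. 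On $\mathcal{I}(F_n)\setminus\Sigma$ we still have $|x - F(x)| > 0$, and the Taylor expansion of $|x - F_n(x)|$ around $F(x)$ requires only $x \ne F(x)$. The mean value theorem applies to $\phi$ on $[0,\infty)$ even when one endpoint is $0$, since $\phi\in C^1([0,\infty))$. So the proof of the proposition should go through verbatim with only $\mu(\mathcal{I}(F)) = 0$ required.
\item If one insists on using the proposition exactly as stated, the alternative is to discard the countably many $\epsilon$ for which $\mu(\Sigma_{\epsilon,\xi}\setminus\Sigma) > 0$. The sets $\mathcal{I}(F_\epsilon)\setminus\Sigma$ are pairwise disjoint for distinct $\epsilon$ when $\xi \circ \pi_\Sigma \ne 0$, because $\epsilon$ is determined by $x$ via $x - \pi_\Sigma(x) = \epsilon\xi(\pi_\Sigma(x))$. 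A finite measure charges at most countably many of them, so one can choose sequences avoiding these values. This handles the limit only along generic sequences, however, so the first fix is the one I would rely on.
\end{itemize}
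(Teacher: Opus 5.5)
Your proposal is correct and follows essentially the same route as the paper: the competitor $F_n = \pi_\Sigma + \epsilon_n\,\xi\circ\pi_\Sigma$ with image in $\Sigma_{\epsilon_n,\xi}$, then monotonicity of $\phi$ to compare with $\mathscr{J}(\Sigma_{\epsilon_n,\xi})$. The paper likewise applies the general first-order approximation along an arbitrary sequence and disintegrates to obtain $\int_\Sigma \xi\cdot\mathcal{B}_{\pi_\Sigma}\,d\nu$ as a $\liminf$ bound. You add detail the paper's proof passes over: normalizing by $\|F_n - F\|_\infty$ (including the case $\xi\circ\pi_\Sigma = 0$ $\mu$-a.e.), and noting that the hypothesis $\mu(\mathcal{I}(F_n)) = 0$, which the paper invokes without checking, is superfluous because the expansion only needs $x \neq \pi_\Sigma(x)$.
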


\begin{proof}
    Let $\pi_{\Sigma} \in \Pi_{\Sigma}$ be arbitrary. Let $\{\epsilon_n\}_{n \in \mathbb{N}}\subseteq (0, \infty)$ with $\epsilon_n \to 0$ as $n \to \infty$. Take $F_n = \pi_{\Sigma} + \epsilon_n \xi\circ \pi_{\Sigma}$, then $\img F_n \subseteq \Sigma_{\epsilon_n, \xi}$ and so $\mathscr{J}(\Sigma_{\epsilon, \xi}) \leq \mathscr{J}(F_n)$ since $\phi$ is nondecreasing. Notice that $||\pi_{\Sigma} - F_n||_{\infty} = \epsilon_n ||\xi||_{\infty} \leq \epsilon_n$. So, we may apply \cref{prop:1generalfirstorderapproximationtheorem1} to see that

    \begin{align*}
        \liminf_{n \to \infty}\frac{\mathscr{J}(\Sigma)- \mathscr{J}(\Sigma_{\epsilon, \xi})}{\epsilon_n} &\geq \liminf_{n \to \infty} \frac{\mathscr{J}(\pi_{\Sigma}) - \mathscr{J}(F_n)}{\epsilon_n} \\
        &\geq \int_{\mathbb{R}^d}\xi(\pi(x))\cdot \Delta_F(x)d\mu \\
        &= \int_{\Sigma}\xi(\sigma)\cdot \mathcal{B}_{\pi_{\Sigma}}(\sigma)d\nu_{\pi_{\Sigma}}(\sigma).
    \end{align*}
    Thus, since $\{\epsilon_n\}_{n \in \mathbb{N}}$ was arbitrary, we conclude that
    \[
    \liminf_{\epsilon \to 0^+}\frac{\mathscr{J}(\Sigma) - \mathscr{J}(\Sigma_{\epsilon, \xi})}{\epsilon}\geq \max_{\pi_{\Sigma} \in \Pi_{\Sigma}}\int_{\Sigma}\xi(\sigma)\cdot \mathcal{B}_{\pi_{\Sigma}}d\nu_{\pi_{\Sigma}}(\sigma).
    \]
\end{proof}

\begin{remark}
    In fact, we expect that the bound \eqref{eq:barycentregradient} is sharp: \cite{Kobayashi24}*{Theorem 4.8} shows under the same assumptions as \cref{theorem:barycentregradient} that for $\phi(t) = t^p$, $p \geq 1$,
    \[
    \lim_{\epsilon \to 0^+}\frac{\mathscr{J}(\Sigma) - \mathscr{J}(\Sigma_{\epsilon, \xi})}{\epsilon} = \max_{\pi_{\Sigma} \in \Pi_{\Sigma}}\int_{\Sigma}\xi(\sigma)\cdot \mathcal{B}_{\pi_{\Sigma}}(\sigma)d\nu_{\pi_{\Sigma}}(\sigma).
    \]
    This result should likely extend to the case of $\phi \in C^1([0, \infty))$ nondecreasing, but as the inequality \eqref{eq:barycentregradient} is sufficient for the purposes of this paper, we will not discuss this equality further. 
\end{remark}

We now continue our analogy between the barycentre field and the negative gradient of a function by interpreting another key property of gradients in terms of the barycentre field. Should the negative gradient of a function be nonzero, it will point in the direction of greatest decrease, and so by perturbing by $\epsilon$ in the direction of the negative gradient, we can decrease the value of the function by order $\epsilon$. We wish to prove a similar property for the barycentre field. First, we will define what it means for $\mathscr{J}$ to have “nonzero gradient” at $F \in \mathscr{F}$.
\begin{definition}\label{def:nontrivialbarycentrefield}
    We say that $F$ has \textit{trivial barycentre field} if 
    \[
    \nu_{F}\{\sigma \in \img(F) \ |\ \mathcal{B}_{F}(\sigma) \ne 0\} = 0. 
    \]
    Otherwise, we say $F$ has \textit{nontrivial barycentre field}. Similarly, we say a compact set $\Sigma \subseteq \mathbb{R}^d$ has \textit{trivial barycentre field} or satisfies the \textit{(generalized) self-consistency property} if $\pi_{\Sigma}$ has trivial barycentre field for all $\pi_{\Sigma} \in \Pi_{\Sigma}$. Otherwise, we say $\Sigma$ has \textit{nontrivial barycentre field}.
\end{definition}

\begin{remark}\label{remark:generalizedselfconsistencyproperty}
    Notice that a curve $\gamma: [0,1] \to \mathbb{R}^d$ satisfies the self-consistency property used by Hastie and Steutzle to define principal curves \cite{Hastie89} if and only if $\img(\gamma)$ satisfies the generalized self-consistency property defined in \cref{def:nontrivialbarycentrefield}, in the case $\phi(t) = t^2$. Indeed, $\pi \in \Pi_{\img(\gamma)}$ has trivial barycentre field for $\phi(t) = t^2$ if and only if for $\nu_{\pi}$-a.e. $\gamma(s) \in \img(\gamma)$,
    \[
    \int_{\pi^{-1}(\gamma(s))}(x - \gamma(s))d\lambda_{\gamma(s)}(x) = \int_{\pi^{-1}(\gamma(s))} x d\lambda_{\gamma(s)}(x) - \gamma(s) = 0,
    \]
    which is precisely the self-consistency property of Hastie and Steutzle \cite{Hastie89}*{Definition 1}. The barycentre field thus gives a natural way to generalize the self-consistency property, justifying our terminology in \cref{def:nontrivialbarycentrefield}.
\end{remark}

In order to use \cref{theorem:barycentregradient} to perturb in the “direction” of the barycentre field, we need to find a continuous perturbation $\xi$ for which the right hand side of \eqref{eq:barycentregradient} is positive. In general, we should not expect the barycentre field to be continuous, as can be seen in \cite{Kobayashi24}*{Counterexample 4.7}; this means that it is necessary to approximate the barycentre field with a continuous perturbation. In fact, we will approximate $\mathcal{B}_{\pi_{\Sigma}}$ by a Lipschitz perturbation: this allows us to control the change in $\mathcal{H}^1$-budget as we perturb $\Sigma$.

\begin{proposition}[Approximation of  $\mathcal{B}_{\pi_{\Sigma}}$]\label{prop:lipschitzapproximation}
  Assume $\phi \in C^1([0, \infty))$ is nondecreasing.
  Suppose that $\pi_{\Sigma} \in \Pi_\Sigma$ has nontrivial barycentre
  field. Then, there exists a Lipschitz map $\xi : \mathbb{R}^d \to
  \mathbb{R}^d$ such that
  \[\int_{\Sigma} \xi(\sigma) \cdot
    \mathcal{B}_{\pi_{\Sigma}}(\sigma) d \nu_{\pi_{\Sigma}}(\sigma) >
    \frac{1}{2}\int_{\Sigma}|\mathcal{B}_{\pi_{\Sigma}}(\sigma) |^2 d
    \nu_{\pi_{\Sigma}}(\sigma) > 0.\]
\end{proposition}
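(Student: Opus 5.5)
The plan is to approximate the vector field $\mathcal{B} := \mathcal{B}_{\pi_{\Sigma}}$ in $L^2(\nu)$ by Lipschitz maps, where $\nu := \nu_{\pi_{\Sigma}}$. The key inequality is
\[
\int_\Sigma \xi\cdot\mathcal{B}\,d\nu = \int_\Sigma |\mathcal{B}|^2\,d\nu + \int_\Sigma (\xi-\mathcal{B})\cdot\mathcal{B}\,d\nu \geq \|\mathcal{B}\|_{L^2(\nu)}^2 - \|\xi-\mathcal{B}\|_{L^2(\nu)}\|\mathcal{B}\|_{L^2(\nu)},
\]
which follows from Cauchy--Schwarz. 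It therefore suffices to find a Lipschitz $\xi$ with $\|\xi - \mathcal{B}\|_{L^2(\nu)} < \tfrac12\|\mathcal{B}\|_{L^2(\nu)}$. We also need $\|\mathcal{B}\|_{L^2(\nu)}\in(0,\infty)$. This gives the strict inequality $\int\xi\cdot\mathcal{B}\,d\nu > \tfrac12\|\mathcal{B}\|^2_{L^2(\nu)}$, and the final inequality $\tfrac12\int|\mathcal{B}|^2d\nu>0$ is exactly nontriviality of the barycentre field (\cref{def:nontrivialbarycentrefield}).

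The first step is to check that $\mathcal{B}$ is a bounded Borel vector field on $\Sigma$. Since $\mu$ is compactly supported and $\pi_\Sigma$ takes values in the compact set $\Sigma$, we have $|x-\pi_\Sigma(x)|\le C:=\diam(\supp\mu\cup\Sigma)$ for $\mu$-a.e. $x$. Because $\phi\in C^1([0,\infty))$, $\phi'$ is bounded on $[0,C]$ by some $M$, so $|\Delta_{\pi_\Sigma}|\le M$ $\mu$-a.e. Here $\Delta_{\pi_\Sigma}$ should be interpreted as $0$ on $\mathcal{I}(\pi_\Sigma)$, where the formula is undefined; this does not affect boundedness.

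By the disintegration theorem, $\sigma\mapsto\int\Delta_{\pi_\Sigma}\,d\lambda_\sigma$ is $\nu$-measurable and satisfies $|\mathcal{B}(\sigma)|\le M$ for $\nu$-a.e. $\sigma$. Hence $\mathcal{B}\in L^\infty(\nu)\subset L^2(\nu)$, since $\nu$ is a finite Borel measure on the compact set $\Sigma$. Nontriviality means $\nu\{\mathcal{B}\neq0\}>0$, so $\|\mathcal{B}\|_{L^2(\nu)}>0$.

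The second step, which is the main technical point (though standard), is density of Lipschitz maps in $L^2(\nu;\mathbb{R}^d)$ for a finite Borel measure $\nu$ on $\mathbb{R}^d$. I would argue componentwise. Continuous compactly supported functions are dense in $L^2(\nu)$ because finite Borel measures on $\mathbb{R}^d$ are Radon (Lusin's theorem). Continuous functions on a compact set are then uniformly approximated by Lipschitz functions, for instance by inf-convolution $f_k(x)=\inf_y\{f(y)+k|x-y|\}$ or by mollification. Uniform approximation implies $L^2(\nu)$ approximation since $\nu$ is finite. This produces a Lipschitz $\xi:\mathbb{R}^d\to\mathbb{R}^d$ with $\|\xi-\mathcal{B}\|_{L^2(\nu)}<\tfrac12\|\mathcal{B}\|_{L^2(\nu)}$.

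If one also wants $\|\xi\|_\infty\le \sup|\mathcal{B}|$, which is useful for later applications of \cref{theorem:barycentregradient}, one can compose with the radial retraction onto the ball of radius $M$. This retraction is $1$-Lipschitz and does not increase the distance to $\mathcal{B}$ pointwise, since $\mathcal{B}$ takes values in that ball $\nu$-a.e. Plugging $\xi$ into the Cauchy--Schwarz estimate above completes the proof.
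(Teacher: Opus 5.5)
Your proposal is correct and follows the paper's route: bound $|\mathcal{B}_{\pi_{\Sigma}}|$ by the maximum of $\phi'$ to place it in $L^2(\nu)$, then approximate it by Lipschitz maps using the density of Lipschitz functions in $L^2(\nu)$. Beyond the paper, you make explicit the Cauchy--Schwarz step that produces the factor $\tfrac12$, and you prove the density yourself (Lusin plus uniform Lipschitz approximation) where the paper cites \cite{Obrie25}*{Lemma 2.14}; your bound via $\diam(\supp\mu\cup\Sigma)$ is also slightly more careful than the paper's range $[0,\diam\supp\mu]$ when $\Sigma$ is a general set.
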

\begin{proof}
    Let 
    \[
    M = \max_{t \in [0, \diam(\supp \mu)]}\phi'(t),
    \]
    then since $\phi \in C^1([0, \infty))$, $M < \infty$. Notice that $|\mathcal{B}_{\pi_{\Sigma}}| \leq M$, and thus $\mathcal{B}_{\pi_{\Sigma}} \in L^2(\Sigma, \nu)$. But Lipschitz functions are dense in $L^2(\Sigma, \nu)$ by \cite{Obrie25}*{Lemma 2.14}, so approximating $\mathcal{B}_{\pi_{\Sigma}}$ by Lipschitz functions yields the desired result. 
\end{proof}

In particular,
\begin{corollary}[Right derivative bound on $j$]\label{cor:rightderivativeboundonj}
    Assume $\phi \in C^1([0, \infty))$ is nondecreasing. Let $\ell > 0$, and suppose there exists a solution $\Sigma \in \mathcal{S}_{\ell}$ to the average distance problem with nontrivial barycentre field. Then, there exists some $C$ such that
    \[
    \liminf_{\epsilon \to 0^+}\frac{j(\ell) - j(\ell + \epsilon)}{\epsilon} \geq C > 0.
    \]
\end{corollary}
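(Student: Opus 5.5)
Since $\Sigma$ has nontrivial barycentre field, there is some $\pi_{\Sigma} \in \Pi_{\Sigma}$ whose barycentre field is nontrivial. Apply \cref{prop:lipschitzapproximation} to this $\pi_{\Sigma}$ to get a Lipschitz map $\xi:\mathbb{R}^d \to \mathbb{R}^d$, with Lipschitz constant $L$, satisfying
\[
a := \int_{\Sigma}\xi \cdot \mathcal{B}_{\pi_{\Sigma}}\, d\nu > 0 .
\]
By rescaling we may assume $\|\xi\|_{\infty} \le 1$ on $\Sigma$; this keeps $a>0$ and lets us use \cref{theorem:barycentregradient}. For $t>0$ consider the perturbed set $\Sigma_{t,\xi} = (\mathrm{id}+t\xi)(\Sigma)$. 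It is compact and connected, being a continuous image of $\Sigma$. The map $\mathrm{id}+t\xi$ is $(1+tL)$-Lipschitz, so
\[
\mathcal{H}^1(\Sigma_{t,\xi}) \le (1+tL)\,\ell ,
\]
and hence $\Sigma_{t,\xi} \in \mathcal{S}_{\ell + tL\ell}$. Therefore $j(\ell + tL\ell) \le \mathscr{J}(\Sigma_{t,\xi})$, and since $\Sigma$ is optimal, $j(\ell) = \mathscr{J}(\Sigma)$.

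To invoke \cref{theorem:barycentregradient} we need $\mu(\Sigma)=0$. This is the one point needing care, because the corollary does not assume it. If it is not automatic, we add the standing hypothesis that $\mu$ does not charge sets of finite $\mathcal{H}^1$-measure, which is the hypothesis used throughout the paper's main results. Granting it, \cref{theorem:barycentregradient} gives
\[
\liminf_{t\to0^+}\frac{\mathscr{J}(\Sigma)-\mathscr{J}(\Sigma_{t,\xi})}{t} \ge a .
\]
(One could instead use \cref{prop:1generalfirstorderapproximationtheorem1} directly with $F=\pi_\Sigma$ and $F_n=\pi_\Sigma+t_n\xi\circ\pi_\Sigma$. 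That route needs $\mu(\mathcal{I}(F))=0$, which again amounts to $\mu(\Sigma)=0$.)

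Now set $\epsilon = tL\ell$. If $L=0$, then $\xi$ is constant and the perturbation is a translation, which costs no length; then $j(\ell)-j(\ell)\ge$ a positive amount for small $t$, a contradiction, so this case is harmless. We may also simply replace $L$ by $\max\{L,1\}$. Substituting $t = \epsilon/(L\ell)$ into the estimate above gives
\[
\liminf_{\epsilon\to0^+}\frac{j(\ell)-j(\ell+\epsilon)}{\epsilon}
\ \ge\ \liminf_{t\to0^+}\frac{\mathscr{J}(\Sigma)-\mathscr{J}(\Sigma_{t,\xi})}{tL\ell}
\ \ge\ \frac{a}{L\ell} .
\]
We take $C = a/(L\ell) > 0$. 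Because $j$ is nonincreasing in $\ell$, the $\liminf$ over all $\epsilon\to0^+$ is the same as the $\liminf$ along the reparametrisation $\epsilon = tL\ell$, so the bound holds as stated.

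The main technical point is the length bound for the Lipschitz image, together with ensuring that the rescaling of $\xi$ keeps both $\|\xi\|_\infty\le1$ and $a>0$.
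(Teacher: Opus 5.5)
Your proof is correct and is essentially the paper's argument: take the Lipschitz approximation $\xi$ of the barycentre field from \cref{prop:lipschitzapproximation}, perturb $\Sigma$ along it, control the length increase by the Lipschitz constant, and apply \cref{theorem:barycentregradient}. The only difference is that the paper normalizes $\zeta = \xi/(\ell L)$ at the start, so a perturbation by $\epsilon$ costs at most $\epsilon$ length, rather than reparametrizing $\epsilon = tL\ell$ afterwards. Your extra care about $\|\xi\|_\infty \le 1$ and $\mu(\Sigma)=0$ is well placed: the paper's proof also needs both when it invokes \cref{theorem:barycentregradient}, but never states them.
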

\begin{proof}
    Take $\pi_{\Sigma} \in \Pi_{\Sigma}$ with nontrivial barycentre field, and take $\xi$ as in \cref{prop:lipschitzapproximation}. Let $L > 0$ be a Lipschitz constant for $\xi$, and consider $\zeta = \frac{1}{\ell L}\xi$. Then, $\Sigma_{\epsilon, \zeta} \in S_{\ell+\epsilon}$, and by \cref{theorem:barycentregradient}, 
    \begin{align*}
        \liminf_{\epsilon \to 0^+}\frac{j(\ell) - j(\ell + \epsilon)}{\epsilon} &\geq \lim_{\epsilon \to 0^+}\frac{\mathscr{J}(\Sigma) - \mathscr{J}(\Sigma_{\epsilon, \zeta})}{\epsilon} \\&\geq \int_{\Sigma}\zeta(\sigma)\cdot \mathcal{B}_{\pi_{\Sigma}}(\sigma)d\nu_{\pi_{\Sigma}}(\sigma)\\  &\geq \frac{1}{2L\ell}\int_{\Sigma}|\mathcal{B}_{\pi_{\Sigma}}(\sigma)|^2 d\nu_{\pi_{\Sigma}}(\sigma) > 0.
    \end{align*}
\end{proof}

\subsection{Negligibility of the ambiguous locus}\label{sec:negligibilityofambiguouslocus}

Now, we will give our first example of the usefulness of the barycentre field by proving that the \textit{ambiguous locus} (the set of points which do not unique closest point in $\Sigma$) is $\mu$-null for sets $\Sigma \subseteq \mathbb{R}^d$ which are local minima of the average distance functional under translation. The results of this section generalize those in \cite{Obrie25}*{Section 2.4}, which in turn generalizes the result of Delattre and Fischer \cite{Delattre17}*{Proposition 3.1}. 

\begin{definition}[Ambiguous locus]\label{def:ambiguouslocus}
  Let $\Sigma \subseteq \mathbb{R}^d$ be compact and nonempty, and for each $x \in \mathbb{R}^d$ consider the set $$\mathcal{P}_{\Sigma}(x) = \{\sigma \in \Sigma \ | \ d(x, \sigma) = \dist(x, \Sigma)\}.$$ We define the \textit{ambiguous locus} of $\Sigma$ to be
  \[
    \mathcal{A}_{\Sigma} = \{x \in \mathbb{R}^d \ | \ \#\mathcal{P}_{\Sigma}(x) > 1\}.
  \]
\end{definition}

First, we show that if the \textit{net barycentre field} is nonzero, then we can always decrease the objective value by translating in the direction of the net barycentre field. 
In particular, if $\Sigma$ is a minimum of the average distance functional over the set of its translates by vectors in a neighbourhood of $0$ in $\mathbb{R}^d$, then the net barycentre field of $\pi_{\Sigma}$ equals $0$ for any $\pi_{\Sigma} \in \Pi_{\Sigma}$. 

\begin{lemma}[Net barycentre field of a minimizer is zero]\label{lemma:netbarycentrefieldiszero}
    Assume $\phi \in C^1([0, \infty))$ is nondecreasing. Let $\Sigma \subseteq \mathbb{R}^d$ be measurable and nonempty, and let $\pi_{\Sigma} \in \Pi_{\Sigma}$. Let
    \begin{equation}\label{eq:netbarycentrefielddef}
        \mathcal{B}_{\pi_{\Sigma}}^{\mathrm{net}} := \int_{\Sigma}\mathcal{B}_{\pi_{\Sigma}}(\sigma)d\nu_{\pi_{\Sigma}}(\sigma) = \int_{\mathbb{R}^d}\frac{(x - \pi_{\Sigma}(x))}{|x - \pi_{\Sigma}(x)|}\phi'(|x - \pi_{\Sigma}(x)|)d\mu
    \end{equation}
    be the \textit{net barycentre field} of $\pi_{\Sigma}$, and define $\Sigma_{\epsilon} : = \Sigma + \epsilon \mathcal{B}_{\pi_{\Sigma}}^{\mathrm{net}}$. Then, 
    \[
    \lim_{\epsilon \to 0^+}\frac{\mathscr{J}(\Sigma) - \mathscr{J}(\Sigma_{\epsilon})}{\epsilon} \geq |\mathcal{B}_{\pi_{\Sigma}}^{\mathrm{net}}|^2.
    \] 
    In particular, if $0$ is a local minimum of the map $\mathbb{R}^d \to \mathbb{R}, a \mapsto \mathscr{J}(\Sigma + a)$, then $\mathcal{B}_{\pi_{\Sigma}}^{\mathrm{net}} = 0$.
\end{lemma}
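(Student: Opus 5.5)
This is the special case of \cref{theorem:barycentregradient} in which the perturbation field is constant. If $\mathcal{B}^{\mathrm{net}}_{\pi_{\Sigma}} = 0$, then $\Sigma_\epsilon = \Sigma$ and the inequality reads $0 \geq 0$, so there is nothing to prove. Otherwise, rather than applying \cref{theorem:barycentregradient} directly, which needs $\|\xi\|_\infty \leq 1$ and $\mu(\Sigma)=0$, I would argue straight from \cref{prop:1generalfirstorderapproximationtheorem1}.

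Write $b := \mathcal{B}^{\mathrm{net}}_{\pi_{\Sigma}}$. Take any sequence $\epsilon_n \to 0^+$ and set $F_n := \pi_{\Sigma} + \epsilon_n b$. Then $\img F_n \subset \Sigma_{\epsilon_n}$, so \cref{remark:extensionofaveragedistancefunctional} (which uses that $\phi$ is nondecreasing) gives $\mathscr{J}(\Sigma_{\epsilon_n}) \leq \mathscr{J}(F_n)$. Also $\|F_n - \pi_{\Sigma}\|_\infty = \epsilon_n |b|$. Applying \eqref{liminffirstorderapproximation} with the normalization $\epsilon_n|b|$, and multiplying through by $|b|$, yields
\[
\liminf_{n\to\infty}\frac{\mathscr{J}(\Sigma)-\mathscr{J}(\Sigma_{\epsilon_n})}{\epsilon_n} \geq \int_{\mathbb{R}^d} b\cdot \Delta_{\pi_{\Sigma}}(x)\,d\mu(x) = b \cdot b = |b|^2 .
\]
The middle equality is exactly the second expression in \eqref{eq:netbarycentrefielddef}. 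Since the sequence $\epsilon_n$ was arbitrary, the $\liminf$ as $\epsilon \to 0^+$ is at least $|b|^2$. This is how the claimed "$\lim$" inequality should be read, consistent with \cref{theorem:barycentregradient}.

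\textbf{Main obstacle: the hypothesis $\mu(\mathcal{I}(F))=\mu(\mathcal{I}(F_n))=0$ in \cref{prop:1generalfirstorderapproximationtheorem1}.}
\begin{itemize}
\item For $F_n$, the set $\mathcal{I}(F_n)$ consists of points $x$ with $x = \pi_\Sigma(x)+\epsilon_n b$. Such $x$ lie in $\Sigma + \epsilon_n b$, a translate of $\Sigma$. There are two ways to handle this. One is to assume, as in \cref{theorem:barycentregradient}, that $\mu$ does not charge $\Sigma$ or its translates. The other is to restrict the integrals to $\mathbb{R}^d\setminus\Sigma$ and bound the contribution near $\Sigma$ directly.
\item On $\Sigma$ itself, $\Delta_{\pi_\Sigma}$ should be read as $0$. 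There $\phi(|x-F_n(x)|)-\phi(0) = \phi(\epsilon_n|b|) \geq 0$, which only helps the inequality, because the displacement term contributes a nonpositive amount to $\mathscr{J}(\Sigma)-\mathscr{J}(F_n)$ and $0$ to the right-hand side. More precisely, it costs at most $\phi'(0)\epsilon_n|b|\,\mu(\Sigma)$ per unit $\epsilon_n$.
\item In the intended applications $\mu(\Sigma)=0$, and I would adopt that (implicit) hypothesis. Points of $\mathcal{I}(F_n)\setminus \Sigma$ are handled pointwise: by the uniform mean value bound in \cref{lemma:phitechnicalapproximation}, the difference quotient is bounded by $2M$. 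So dominated convergence still applies after a routine check that, for each fixed $x\notin\Sigma$, $x\in\mathcal{I}(F_n)$ for at most one $n$.
\end{itemize}

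\textbf{Final claim.} If $0$ is a local minimum of $a\mapsto \mathscr{J}(\Sigma+a)$, then for small $\epsilon$ we have $\mathscr{J}(\Sigma)-\mathscr{J}(\Sigma_\epsilon)\leq 0$. Hence the $\liminf$ above is $\leq 0$, which forces $|b|^2 \leq 0$, i.e.\ $b=0$.
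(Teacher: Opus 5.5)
Your argument is correct and is essentially the paper's proof. The paper simply applies \cref{theorem:barycentregradient} with the constant field $\xi = \mathcal{B}^{\mathrm{net}}_{\pi_\Sigma}$, and you have unpacked that proposition (same $F_n = \pi_\Sigma + \epsilon_n b$, same use of \cref{remark:extensionofaveragedistancefunctional} and \cref{prop:1generalfirstorderapproximationtheorem1}), while handling more carefully three points the paper glosses over: the normalization when $|b|>1$, the $\liminf$ reading, and the implicit hypothesis $\mu(\Sigma)=0$.

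One correction: points of $\Sigma$ do not ``only help''---they contribute a loss of about $\phi'(0)|b|\mu(\Sigma)$ per unit $\epsilon$, which can genuinely destroy the inequality when $\phi'(0)>0$. So assuming $\mu(\Sigma)=0$, as the paper does through \cref{theorem:barycentregradient}, is necessary rather than merely convenient.
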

\begin{proof}
    Applying \cref{theorem:barycentregradient} with $\xi = \mathcal{B}_{\pi_{\Sigma}}^{\mathrm{net}}$, we get
    \[
    \lim_{\epsilon \to 0^+}\frac{\mathscr{J}(\Sigma) - \mathscr{J}(\Sigma_{\epsilon})}{\epsilon} \geq \int_{\Sigma} \mathcal{B}_{\pi_{\Sigma}}^{\mathrm{net}}\cdot \mathcal{B}_{\pi_{\Sigma}}(\sigma)d\nu(\sigma) = |\mathcal{B}_{\pi_{\Sigma}}^{\mathrm{net}}|^2.
    \]
\end{proof}

Using \cref{lemma:netbarycentrefieldiszero} to play the role of \cite{Delattre17}*{Remark 2}, we can generalize Delattre and Fischer's proof of \cite{Delattre17}*{Proposition 3.1}.

\begin{proposition}[Negligibility of the ambiguous locus]\label{prop:negligibilityofambiguouslocus}
  Assume $\phi \in C^1([0, \infty))$ is strictly increasing. Suppose that $\mathscr{J}(\Sigma + a) \geq \mathscr{J}(\Sigma)$ for all $a$ in some open neighbourhood of $0$. Then, $\mu(\mathcal{A}_{\Sigma}) = 0$.
\end{proposition}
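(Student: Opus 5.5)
The plan is to adapt Delattre and Fischer's argument, with \cref{lemma:netbarycentrefieldiszero} playing the role of their first-order condition. The idea is to build, for each direction, a pair of closest-point projections that disagree exactly on the ambiguous points in a controlled way. Since every closest-point projection has zero net barycentre field, the difference of the two net barycentre fields will force the disagreement to be $\mu$-negligible.

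\textbf{Step 1: extremal projections.} Fix a unit vector $u \in \mathbb{S}^{d-1}$. Apply \cref{lemma:measurableselection} to the closed set $D = \{(x,\sigma) : |x-\sigma| = \dist(x,\Sigma)\}$ with the continuous function $f(x,\sigma) = \sigma\cdot u$. This gives a Borel $\pi_{-u} \in \Pi_\Sigma$ selecting a closest point minimising $\sigma\cdot u$ over $\mathcal{P}_\Sigma(x)$. The function $-\sigma\cdot u$ gives $\pi_{u}$, selecting a closest point maximising $\sigma\cdot u$. By hypothesis, $0$ is a local minimum of $a\mapsto \mathscr{J}(\Sigma+a)$, so \cref{lemma:netbarycentrefieldiszero} gives $\mathcal{B}^{\mathrm{net}}_{\pi_u} = \mathcal{B}^{\mathrm{net}}_{\pi_{-u}} = 0$.

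Two technical points need care here. First, the lemma rests on \cref{theorem:barycentregradient}, which assumed $\mu(\Sigma)=0$. Points $x\in\Sigma$ have $\mathcal{P}_\Sigma(x)=\{x\}$, so they are never ambiguous. I would handle them by setting $\Delta := 0$ on $\Sigma$ and noting that translation only increases $\phi(\dist(x,\cdot))$ there. This needs a short check that the first-order inequality survives. Second, with $r(x) = \dist(x,\Sigma)$, we have $\Delta_{\pi_{\pm u}}(x) = \frac{x-\pi_{\pm u}(x)}{r(x)}\phi'(r(x))$.

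\textbf{Step 2: sign argument.} Subtracting the two net barycentre fields and dotting with $u$ gives
\[
0 = \int_{\mathbb{R}^d\setminus\Sigma} \frac{\phi'(r(x))}{r(x)}\,\bigl(\pi_{-u}(x) - \pi_u(x)\bigr)\cdot u \, d\mu(x).
\]
The integrand is pointwise $\le 0$, since $\phi'\ge 0$ and $\pi_u(x)\cdot u \ge \pi_{-u}(x)\cdot u$ by construction. Hence it vanishes $\mu$-a.e. Wherever $\phi'(r(x))>0$, this says that all of $\mathcal{P}_\Sigma(x)$ lies in a single affine hyperplane orthogonal to $u$. Now run over a countable dense set $\{u_k\}\subset \mathbb{S}^{d-1}$ and discard a countable union of null sets. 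For $\mu$-a.e. such $x$, any two points $\sigma,\sigma'\in\mathcal{P}_\Sigma(x)$ satisfy $(\sigma-\sigma')\cdot u_k = 0$ for all $k$. By density, $\sigma=\sigma'$, so $x\notin\mathcal{A}_\Sigma$.

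\textbf{Main obstacle.} The hard part is the set $Z=\{x\notin\Sigma : \phi'(r(x))=0\}$. Strict monotonicity only makes the zero set of $\phi'$ a closed set with empty interior, and $\mu$ could charge the level sets of $r$ over it. On $Z$ the first-order information is lost. My first attempt would be to bypass derivatives and compare $\mathscr{J}(\Sigma+\epsilon u)$ with $\mathscr{J}(\Sigma)$ directly. For ambiguous $x$, use the competitor point $\pi_{-u}(x)+\epsilon u$ or $\pi_u(x)+\epsilon u$, whichever is closer. Then $|x-\sigma-\epsilon u|^2 = r^2 - 2\epsilon(x-\sigma)\cdot u+\epsilon^2$, and choosing between the two extremal points gains a strictly positive amount of order $\epsilon$ in squared distance on ambiguous points. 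Strict monotonicity of $\phi$ then converts this into a strict decrease of $\phi$ at those points. The remaining task is to show that this gain dominates the possible loss at the non-ambiguous points. If that comparison fails, I would restrict the argument to $\{\phi'(r)>0\}$, where Step 2 already suffices. I would then try to show that $\mu(Z\cap\mathcal{A}_\Sigma)=0$ using a second-order expansion at points of $Z$, exploiting that $\phi$ is strictly increasing on every neighbourhood of each zero of $\phi'$.
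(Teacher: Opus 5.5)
Your Steps 1--2 follow the paper's proof: extremal selections from \cref{lemma:measurableselection}, vanishing net barycentre fields from \cref{lemma:netbarycentrefieldiszero}, and the sign of their difference. (The paper uses only the $d$ coordinate directions, which suffice because two distinct points differ in some coordinate.) The paper handles your set $Z$ by asserting that, because $\phi\in C^1$ is strictly increasing, $\phi'(\dist(x,\Sigma))>0$ on $\mathcal{A}_\Sigma$. That implication is false, so you were right to isolate $Z$. However, your proposal leaves $Z$ unresolved, and no second-order argument can resolve it, because the proposition itself fails there.

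Take $d=2$, $\Sigma=\{(\pm3,0)\}$, $\mu=\frac14(\delta_{(0,4)}+\delta_{(0,-4)}+\delta_{(9,0)}+\delta_{(-9,0)})$ and $\phi(t)=125+(t-5)^3$. This $\phi$ is strictly increasing with $\phi(0)=0$ and $\phi'(5)=0$. The atoms $(0,\pm4)$ are at distance $5$ from both points of $\Sigma$, so $\mu(\mathcal{A}_\Sigma)=\frac12$. Under a translation by $v$ their distances move by at most $|v|$, so they change $\mathscr{J}$ only by $O(|v|^3)$. The atoms $(\pm9,0)$ sit at distance $6$, where $\phi'=3$ and $\phi''=6$. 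Their first-order terms cancel, and for small $v\neq 0$ one gets
\[
\mathscr{J}(\Sigma+v)-\mathscr{J}(\Sigma)=\tfrac14\big(6v_1^2+\tfrac12 v_2^2\big)+O(|v|^3)>0 .
\]
The degenerate level set is invisible at the order at which the rest of $\mu$ already pins $\Sigma$.

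Your ``short check'' at points of $\Sigma$ also fails when $\phi'(0)>0$. Translating by $\epsilon w$ raises their cost by $\phi(\epsilon)\approx\phi'(0)\epsilon$, which is a first-order term. So local minimality only yields $\int_{\mathbb{R}^d\setminus\Sigma}w\cdot\Delta_\pi\,d\mu\le\phi'(0)\mu(\Sigma)$, and adding these inequalities for $(\pi_{-u},u)$ and $(\pi_u,-u)$ no longer forces your nonnegative integrand to vanish. Concretely, take $\phi(t)=t$, the same $\Sigma$, and $\mu=\frac14(\delta_{(3,0)}+\delta_{(-3,0)}+\delta_{(0,4)}+\delta_{(0,-4)})$. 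Then $\mathscr{J}(\Sigma+v)-\mathscr{J}(\Sigma)=\frac14(2|v|-\frac65|v_1|)+O(|v|^2)>0$, while $\mu(\mathcal{A}_\Sigma)=\frac12$. The paper avoids this only tacitly: \cref{lemma:netbarycentrefieldiszero} goes through \cref{theorem:barycentregradient}, which assumes $\mu(\Sigma)=0$.

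So what is missing is hypotheses, not an idea. Suppose $\phi'>0$ on $(0,\infty)$ (true for $t^p$, and forced by \eqref{eq:alpha2} on $(0,\diam\supp\mu)$), and suppose $\mu(\Sigma)=0$ or $\phi'(0)=0$. Then your Steps 1--2 already form a complete proof, identical to the paper's. Without these hypotheses, neither argument can be completed.
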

\begin{proof}
  Suppose for the sake of contradiction that $\mu(\mathcal{A}_{\Sigma})> 0$. To achieve a contradiction, we claim that it suffices to construct for each $j \in \{1, \dots, d\}$  a pair $\hat{X}, \hat{Y} \in \Pi_\Sigma$ (that is, $\hat{X}, \hat{Y}: \mathbb{R}^d \to \Sigma$ measurable maps such that $|x - \hat{X}(x)| = |x - \hat{Y}(x)| = \dist(x, \Sigma)$ for all $x \in \mathbb{R}^d$), such that
  \begin{align*}
    \hbox{$\hat{X}^j(x) = \max \pi_j(\mathcal{P}_{\Sigma}(x))$, and $\hat{Y}^j(x) = \min \pi_j(\mathcal{P}_{\Sigma}(x))$,}
  \end{align*}
  where we write $\hat{X}^j(x) = \pi_j(\hat{X}(x))$, and $\pi_j:
  \mathbb{R}^d \to \mathbb{R}, (x^1, \dots, x^d) \mapsto x^j$ is the
  projection map.  Indeed, suppose such a pair $\hat{X},
  \hat{Y}$ exist for each $j$. Notice
  that $$\mathcal{A}_{\Sigma}\subseteq \bigcup_{j =1}^d \{x \in
  \mathbb{R}^d \ | \ \max \pi_j(\mathcal{P}_{\Sigma}(x)) > \min
  \pi_j(\mathcal{P}_{\Sigma}(x))\}.$$ So, since
  $\mu(\mathcal{A}_{\Sigma}) > 0$, there is some $j \in \{1, \dots,
  d\}$ such that $$\{x \in \mathbb{R}^d \ |\ \max
  \pi_j(\mathcal{P}_{\Sigma}(x)) > \min
  \pi_j(\mathcal{P}_{\Sigma}(x))\} \cap \mathcal{A}_{\Sigma}$$ has positive measure with respect
  to $\mu$.

  By \cref{lemma:netbarycentrefieldiszero}, since $\mathscr{J}(\Sigma + a) \geq \mathscr{J}(\Sigma)$ for all $a$ in a neighbourhood of $0$, $\mathcal{B}_{\hat{X}}^{\mathrm{net}} = \mathcal{B}_{\hat{Y}}^{\mathrm{net}} = 0$. So, we have
  \begin{align*}
    0 &= \mathcal{B}_{\hat{X}}^{\mathrm{net}} - \mathcal{B}_{\hat{Y}}^{\mathrm{net}} \\
    &= \int_{\mathbb{R}^d}\frac{(x - \hat{X}(x))}{|x - \hat{X}(x)|}\phi'(|x - \hat{X}(x)|)d\mu(x) \\
    & \qquad - \int_{\mathbb{R}^d}\frac{(x - \hat{Y}(x))}{|x - \hat{Y}(x)|}\phi'(|x - \hat{Y}(x)|)d\mu(x)\\
    &= \int_{\mathcal{A}_{\Sigma}}(\hat{X}(x) - \hat{Y}(x))\frac{\phi'(\dist(x, \Sigma))}{\dist(x, \Sigma)}d\mu(x).
  \end{align*}
    For each $x \in \Sigma$, $\mathcal{P}_{\Sigma}(x) = \{x\}$, thus we have that $\dist(x, \Sigma) > 0$ for every $x \in \mathcal{A}_{\Sigma}$. So, since $\phi \in C^1([0, \infty))$ is strictly increasing, $\phi'(\dist(x, \Sigma)) > 0$ for all $x \in \mathcal{A}_{\Sigma}$. Moreover, we have $\hat{X}^j(x) - \hat{Y}^j(x) \geq 0$ for all $x \in \mathbb{R}^d$ by definition, and the inequality is strict on a positive measure subset of $\mathcal{A}_{\Sigma}$. So,
    \begin{align*}
        0 &= \pi_{j}\left(\int_{\mathcal{A}_{\Sigma}}(\hat{X}(x) - \hat{Y}(x))\frac{\phi'(\dist(x, \Sigma))}{\dist(x, \Sigma)}d\mu(x)\right) \\ &= \int_{\mathcal{A}_{\Sigma}}(\hat{X}^j(x) - \hat{Y}^j(x))\frac{\phi'(\dist(x, \Sigma))}{\dist(x, \Sigma)}d\mu(x) > 0,
    \end{align*}
    a contradiction.
    
  So, it remains to show that such $\hat{X}$ and $\hat{Y}$ exist. We show that $\hat{X}$ exists, the same argument applies to $\hat{Y}$ \textit{mutatis mutandis}. Fix $j \in \{1, \dots, d\}$, and consider $D = \{(x, \sigma) \in \mathbb{R}^d\times \Sigma \ | \ \sigma \in \mathcal{P}_{\Sigma}(x)\}$, then $D$ is closed. Let $f : D \to \mathbb{R}, (x, \sigma)\mapsto -\pi_j(\sigma)$, then $f$ is continuous. So, by \cref{lemma:measurableselection}, there exists a Borel-measurable function $\hat{X}: \mathbb{R}^d \to \Sigma$ such that $(x, \hat{X}(x)) \in \mathcal{P}_{\Sigma}(x)$ for each $x \in \mathbb{R}^d$, and for all $x \in \mathbb{R}^d$,
  \[
    f(x, \hat{X}(x)) = \pi_j(\hat{X}(x)) = \hat{X}^j(x) = \max_{\sigma \in \mathcal{P}_{\Sigma}(x)}\pi_j(\sigma).
  \]
  Thus, the desired $\hat{X} \in \Pi_{\Sigma}$ exists. 
\end{proof}

In particular, this allows us to conclude that any two closest-point projections onto an average distance minimizer are unique $\mu$-a.e.

\begin{corollary}[Uniqueness of closest-point projection]\label{cor:uniquenessofclosestpointprojections}
    Assume $\phi \in C^1([0, \infty))$ is strictly increasing. Let $\Sigma \in \mathcal{S}_{\ell}$ be an average distance minimizer. Then, $\mu(\mathcal{A}_{\Sigma}) = 0$, and for any two closest-point projections $\pi_{\Sigma}, \pi_{\Sigma}' \in \Pi_{\Sigma}$, 
    \begin{enumerate}
        \item $\pi_{\Sigma}(x) = \pi_{\Sigma}'(x)$ for $\mu$-a.e. $x \in \mathbb{R}^d$,
        \item $\mathcal{B}_{\pi_{\Sigma}} = \mathcal{B}_{\pi_{\Sigma}'}$, and 
        \item $\nu_{\pi_{\Sigma}} = \nu_{\pi_{\Sigma}'}$. 
    \end{enumerate}
    In particular, under these assumptions, we have 
    \[
    \nu_{\pi_{\Sigma}}(\{\sigma\}) = \mu(\{x \in \mathbb{R}^d \ | \ d(x, \sigma) = \dist(x, \Sigma)\})
    \]
    for every $\sigma \in \Sigma$.
\end{corollary}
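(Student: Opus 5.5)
The plan is to reduce everything to \cref{prop:negligibilityofambiguouslocus}. A minimizer $\Sigma \in \mathcal{S}_{\ell}$ is in particular a minimizer among its own translates. For any $a \in \mathbb{R}^d$, the translate $\Sigma + a$ is compact and connected, and $\mathcal{H}^1(\Sigma + a) = \mathcal{H}^1(\Sigma) \leq \ell$ by translation invariance of Hausdorff measure. So $\Sigma + a \in \mathcal{S}_{\ell}$, and hence $\mathscr{J}(\Sigma + a) \geq \mathscr{J}(\Sigma)$ for all $a$. The hypothesis of \cref{prop:negligibilityofambiguouslocus} therefore holds with neighbourhood $\mathbb{R}^d$, and since $\phi \in C^1([0,\infty))$ is strictly increasing, we get $\mu(\mathcal{A}_{\Sigma}) = 0$.

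Next I would prove (1). For $x \notin \mathcal{A}_{\Sigma}$, the set $\mathcal{P}_{\Sigma}(x)$ is a singleton; it is nonempty because $\Sigma$ is compact. Any $\pi \in \Pi_{\Sigma}$ satisfies $\pi(x) \in \mathcal{P}_{\Sigma}(x)$ by definition. Hence $\pi_{\Sigma}(x) = \pi_{\Sigma}'(x)$ off the $\mu$-null set $\mathcal{A}_{\Sigma}$.

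Items (2) and (3) follow from (1).
\begin{itemize}
\item[(3)] Pushforwards of $\mu$ under maps that agree $\mu$-a.e. coincide, so $\nu_{\pi_{\Sigma}}(B) = \mu(\pi_{\Sigma}^{-1}B) = \mu(\pi_{\Sigma}'^{-1}B)$ for every Borel $B$.
\item[(2)] The cleanest route is to characterize $\mathcal{B}_{\pi}$ without reference to the disintegration. It is the $\nu$-a.e. unique density satisfying
\[
\int_{\Sigma} g(\sigma)\,\mathcal{B}_{\pi}(\sigma)\,d\nu(\sigma) = \int_{\mathbb{R}^d} g(\pi(x))\,\Delta_{\pi}(x)\,d\mu(x)
\]
for all bounded Borel $g$. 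In other words, $\mathcal{B}_{\pi}\,\nu = \pi_{\#}(\Delta_{\pi}\mu)$, a conditional expectation. The right-hand side depends only on the $\mu$-a.e. class of $\pi$, because $\Delta_{\pi}$ and $g\circ \pi$ change only on a $\mu$-null set. The left-hand side uses the common measure $\nu$ from (3). Hence $\mathcal{B}_{\pi_{\Sigma}} = \mathcal{B}_{\pi_{\Sigma}'}$ $\nu$-a.e.
\end{itemize}
Equality in (2) is to be read $\nu$-a.e., since the barycentre field is defined only through a disintegration. I would note this, as the disintegration itself is unique only $\nu$-a.e.

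For the final formula, fix $\sigma \in \Sigma$ and set $E_\sigma = \{x : d(x,\sigma) = \dist(x,\Sigma)\}$, so $E_\sigma = \{x : \sigma \in \mathcal{P}_{\Sigma}(x)\}$. Clearly $\pi_{\Sigma}^{-1}\{\sigma\} \subseteq E_\sigma$. Conversely, if $x \in E_\sigma \setminus \mathcal{A}_{\Sigma}$, then $\mathcal{P}_{\Sigma}(x) = \{\sigma\}$, so $\pi_{\Sigma}(x) = \sigma$. Thus $E_\sigma \setminus \pi_{\Sigma}^{-1}\{\sigma\} \subseteq \mathcal{A}_{\Sigma}$, which is $\mu$-null, and so $\nu_{\pi_{\Sigma}}(\{\sigma\}) = \mu(E_\sigma)$. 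The set $E_\sigma$ is closed, hence measurable.

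I expect no real obstacle here. The only delicate point is making (2) precise, which the conditional-expectation characterization above handles.
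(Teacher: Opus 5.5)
Your proposal is correct and follows the paper's proof essentially step for step. Translation invariance of $\mathcal{S}_{\ell}$ gives negligibility of the ambiguous locus, the two projections can differ only on $\mathcal{A}_{\Sigma}$, and the final formula comes from the same sandwich $E_\sigma \setminus \mathcal{A}_{\Sigma} \subseteq \pi_{\Sigma}^{-1}\{\sigma\} \subseteq E_\sigma$. Your identification $\mathcal{B}_{\pi}\,\nu = \pi_{\#}(\Delta_{\pi}\mu)$ usefully justifies the passage from (1) to (2), which the paper simply asserts, and correctly qualifies that equality as holding $\nu$-a.e.
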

\begin{proof}
    The set $\mathcal{S}_{\ell}$ is translation invariant, since for any $\Sigma \in \mathcal{S}_{\ell}$ and $a \in \mathbb{R}^d$, the set $a + \Sigma$ is compact, connected, and 
    \[
    \mathcal{H}^1(\Sigma + a) = \mathcal{H}^1(\Sigma) \leq \ell.
    \]
    So, \cref{prop:negligibilityofambiguouslocus} shows that $\mu(\mathcal{A}_{\Sigma}) = 0$ for any minimizer $\Sigma$ of the average distance problem. Since 
    \[
    \{x \in \mathbb{R}^d \ | \ \pi_{\Sigma}(x) \ne \pi_{\Sigma}'(x)\} \subseteq \mathcal{A}_{\Sigma},
    \]
    we conclude that $\pi_{\Sigma}(x) = \pi_{\Sigma}'(x)$ for $\mu$-a.e. $x$; this implies that $\mathcal{B}_{\pi_{\Sigma}} = \mathcal{B}_{\pi_{\Sigma}'}$ and $\nu_{\pi_{\Sigma}} = \nu_{\pi_{\Sigma}'}$. Moreover,  we have 
    \begin{align*}
        \{x \in \mathbb{R}^d \ | \ d(x, \sigma) = \dist(x, \Sigma)\} \setminus \mathcal{A}_{\Sigma} & \subseteq \pi_{\Sigma}^{-1}\{\sigma\} \\
        & \subset \{x \in \mathbb{R}^d \ | \ d(x, \sigma) = \dist(x, \Sigma)\},
    \end{align*}
    so we conclude that 
    \[
    \nu_{\pi_{\Sigma}}(\{\sigma\}) = \mu(\{x \in \mathbb{R}^d \ | \ d(x, \sigma) = \dist(x, \Sigma)\}).
    \]
\end{proof}

\subsection{Bounding higher-order terms}\label{sec:boundinghigherorderterms}

We now will prove a bound on the higher-order terms in the expansion given in the general first-order approximation \cref{prop:1generalfirstorderapproximationtheorem1}. This result will be particularly useful when bounding the change in $\mathscr{J}(\Sigma)$ under local modifications to $\Sigma$; such arguments will play a key role in the following sections, especially in the proof of existence of an atom/barycentre nontriviality for average distance minimizers \cref{theorem:existenceofatom}.

The parameters in the following bound play an important role in the proof of \cref{theorem:existenceofatom}. In particular, the ability to choose the value of the parameter $\delta$ appearing in \cref{prop:boundinghigherorderestimatescase1}, possibly depending on $\epsilon$, plays a pivotal role in this proof, as does the ability to choose (by carefully constructing $F$ and $G$) the set $A_{F,G}$. For the sake of concision, we now introduce some notation which we will use throughout the rest of the paper. Define the measure
\begin{equation}\label{eq:gammadefinition}
    d\gamma_F := \phi'(|x - F(x)|)d\mu;
\end{equation}
this will allow us to use the shorthand 
\[
\gamma_F(E) = \int_{E}\phi'(|x - F(x)|)d\mu(x)
\]
for a Borel set $E \subseteq \mathbb{R}^d$. When $F = \pi_{\Sigma}$ and $\Sigma$ is clear from context, we will often write $\gamma = \gamma_{\pi_{\Sigma}}$. Moreover, define the measure
\begin{equation}\label{eq:rhodefinition}
    \rho = \rho_{F} = F_{\#}(\gamma_F).
\end{equation}
Additionally, we denote
\begin{equation}\label{eq:mphinotation}
    M_{\phi}:= \max_{t \in [0, \diam( \supp \mu)]}\phi'(t);
\end{equation}
when $\phi$ is clear from context we simply write $M = M_{\phi}$. We denote the $\delta$-neighbourhood of a set $E \subseteq \mathbb{R}^d$ by
\[
B_{\delta}(E) = \bigcup_{\sigma \in E}B_{\delta}(\sigma).
\]
Finally, given $F \in \mathcal{F}$, we denote
\[
\mathcal{I}(F) := \{x \in \mathbf{R}^d \ | \ |F(x) - x| = 0\}. 
\]

\begin{proposition}[Bounding higher-order terms]\label{prop:boundinghigherorderestimatescase1}
    Assume $\phi \in C^{1,1}_{\mathrm{loc}}([0, \infty))$, and let $L$ be a Lipschitz constant for $\phi'$ on $[0, \mathrm{diam}(\supp \mu)]$. Let $F, G \in \mathcal{F}$, and denote $\epsilon = ||F - G||_{\infty}$. Suppose that $\mu(\mathcal{I}(F)) = \mu(\mathcal{I}(G)) = 0$. Denote 
    \begin{equation}\label{eq:afgnotation}
        A_{F,G} = \{x \in \mathbb{R}^d \ | \ F(x) \ne G(x)\}.
    \end{equation}
    Then, for each $\delta > 0$, 
    \begin{align*}
        &\mathscr{J}(F) - \mathscr{J}(G) -\int_{\mathbb{R}^d} (G(x) - F(x))\cdot \Delta_F(x)d\mu(x) \\
        & \qquad \geq - \frac{\epsilon^2}{2\delta}\gamma_F(A_{F,G} \setminus B_{\delta}(\mathcal{I}(F)))\\
        &\qquad \qquad - L\epsilon^2\mu(A_{F,G} \setminus B_{\delta}(\mathcal{I}(F)))-2M\epsilon\mu(B_{\delta}(\mathcal{I}(F)) \cap A_{F,G}).
    \end{align*}
\end{proposition}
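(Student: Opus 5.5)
The plan is to prove the inequality pointwise in $x$ and then integrate. Write
\[
\Phi(x) := \phi(|x - F(x)|) - \phi(|x - G(x)|) - (G(x) - F(x))\cdot \Delta_F(x).
\]
Then the left-hand side of the claim is $\int_{\mathbb{R}^d}\Phi\, d\mu$. Since $\Phi \equiv 0$ off $A_{F,G}$, it suffices to bound $\Phi$ from below separately on $A_{F,G}\cap B_{\delta}(\mathcal{I}(F))$ and on $A_{F,G}\setminus B_{\delta}(\mathcal{I}(F))$. Throughout we discard the $\mu$-null sets $\mathcal{I}(F)$ and $\mathcal{I}(G)$, so that $a := |x - F(x)| > 0$ and $b := |x - G(x)| > 0$. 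We also have $|a - b| \le |F(x) - G(x)| \le \epsilon$.

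\emph{Near region.} On $A_{F,G}\cap B_{\delta}(\mathcal{I}(F))$ we use crude bounds. By the mean value theorem, $|\phi(a) - \phi(b)| \le M|a-b| \le M\epsilon$. Since $|\Delta_F(x)| = |\phi'(a)| \le M$, also $|(G-F)\cdot \Delta_F| \le M\epsilon$. Hence $\Phi \ge -2M\epsilon$ there, which produces the last term. This needs $a, b$ to lie in the range where $M$ bounds $\phi'$, namely $[0, \diam \supp\mu]$. As elsewhere in the paper, I will take this as given for $x \in \supp\mu$ when $F$ and $G$ take values near $\supp\mu$. If this fails, $M$ should be read as a bound for $\phi'$ on the relevant compact range.

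\emph{Far region.} On $A_{F,G}\setminus B_{\delta}(\mathcal{I}(F))$ the key input is $a \ge \delta$. I expect this to be the main obstacle, because it is not literally implied by $x \notin B_\delta(\mathcal{I}(F))$ for an arbitrary $F$. In the case of interest, $F = \pi_{\Sigma}$ and $\mathcal{I}(F) = \Sigma$ (up to the role of $\Sigma \cap \supp \mu$), so $a = \dist(x,\Sigma) \ge \delta$. I would first try to justify $a \ge \delta$ in that setting; failing that, I would record it as the interpretation of the hypothesis needed in the applications. Granting $a \ge \delta$, set $v = G(x) - F(x)$ and $u = (x - F(x))/a$. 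Then $b^2 = a^2 - 2a\, v\cdot u + |v|^2$, and $\sqrt{1+t} \le 1 + t/2$ gives
\[
b \le a - v\cdot u + \frac{|v|^2}{2a} \le a - v\cdot u + \frac{\epsilon^2}{2\delta}.
\]
Next, the $C^{1,1}$ Taylor bound $\phi(b) \le \phi(a) + \phi'(a)(b-a) + \tfrac{L}{2}(b-a)^2$ applies, with $|b-a| \le \epsilon$. Combining this with the previous display when $\phi'(a) \ge 0$ gives
\[
\phi(a) - \phi(b) \ge \phi'(a)\, v\cdot u - \phi'(a)\frac{\epsilon^2}{2\delta} - \frac{L}{2}\epsilon^2.
\]
Since $\phi'(a)\, v\cdot u = (G-F)\cdot\Delta_F$, this yields $\Phi \ge -\phi'(a)\epsilon^2/(2\delta) - L\epsilon^2$.

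If $\phi'(a) < 0$ is possible, I would instead bound $|b - (a - v\cdot u)| \le \epsilon^2/(2\delta)$ two-sidedly. Indeed, $b - (a - v\cdot u) = (|v|^2 - (v\cdot u)^2)/(b + a - v\cdot u)$, so it lies between $0$ and $|v|^2/(2a)$ after a short check. Then $\phi'(a)$ is replaced by $|\phi'(a)|$; in the intended applications $\phi$ is nondecreasing, so this is moot. Integrating against $\mu$ over $A_{F,G}\setminus B_\delta(\mathcal{I}(F))$, the $\phi'(a)$ factor turns into $\gamma_F$. Adding the near-region contribution then gives the stated inequality.
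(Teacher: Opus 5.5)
Your argument is correct and is essentially the paper's. It uses the same split of $A_{F,G}$ along $B_\delta(\mathcal{I}(F))$, the same crude $-2M\epsilon$ bound near $\mathcal{I}(F)$, and the same $\sqrt{1+t}\le 1+t/2$ estimate far away. The only difference is that your $C^{1,1}$ Taylor remainder (giving $\tfrac{L}{2}\epsilon^2$) replaces the paper's mean-value-plus-Lipschitz step.

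The lower bound $|x-F(x)|\ge\delta$ that you flag is also used silently in the paper, and it cannot be dropped for general $F$. Take $\phi(t)=t$, $F(x)=x+se_1$, $G(x)=x+se_2$: then $\mathcal{I}(F)=\mathcal{I}(G)=\emptyset$, the left side is $-s$ and the right side is $-s^2/\delta$, so the inequality fails for $\delta>s$. It does hold for $F=\pi_\Sigma$, since $\mathcal{I}(\pi_\Sigma)=\Sigma$, and that is the only case the paper uses.
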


\begin{proof}
    In what follows, we write $A = A_{F,G}$. Let $x \in \mathbb{R}^d \setminus (\mathcal{I}(F) \cup \mathcal{I}(G))$, then we may apply the mean value theorem to find some $\min\{|x - F(x)| , | x- G(x)|\} \leq c^x \leq \max\{ |x - F(x)| , |x - G(x)|\}$ such that 
     \[
    \phi(|x - F(x)|) - \phi(|x - G(x)|) = (|x - F(x)| - |x -G(x)|)\phi'(c^x).
    \]
    By the definition of $L$, we then have
    \begin{align*}
        &\phi(|x - F(x)|) - \phi(|x - G(x)|) \\
        & \qquad \geq (|x - F(x)| - |x -G(x)|)\phi'(|x - F(x)|) -L\epsilon^2.
    \end{align*}
    Using the inequality $\sqrt{1 + x} \leq 1 + \frac{1}{2}x$, we have
    \begin{equation}\label{eq:squarerootinequalityresult}
        \begin{split}
            &|x - G(x)| = |(x - F(x)) - (G(x) - F(x))|\\
            &\qquad = |x - F(x)|\sqrt{1 - 2(G(x) - F(x))\cdot\frac{(x - F(x))}{|x - F(x)|^2} + \frac{|G(x) - F(x)|^2}{|x - F(x)|^2}}\\
        &\qquad \leq |x - F(x)| - (G(x) - F(x))\cdot \frac{(x - F(x))}{|x - F(x)|} + \frac{1}{2}\frac{|G(x) - F(x)|^2}{|x -F(x)|}.
        \end{split}
    \end{equation}
    So, since $\mu(\mathcal{I}(F)) = \mu(\mathcal{I}(G)) = 0$, we find that for $\mu$-a.e. $x \in \mathbb{R}^d$ that
    \begin{equation*}
        \begin{split}
            &\phi(|x - F(x)|) - \phi(|x - G(x)|) + (G(x) - F(x))\cdot \Delta_F(x) \\
            & \qquad \geq -L\epsilon^2
        - \frac{1}{2}\frac{|G(x) - F(x)|^2}{|x - F(x)|}\phi'(|x - F(x)|).
        \end{split}
    \end{equation*}
    For $\mu$-a.e. $x \in A \setminus B_{\delta}(\mathcal{I}(F))$, we thus get that
    \begin{equation}\label{eq:lowerboundinAminusBdelta}
        \begin{split}
            &\phi(|x - F(x)|) - \phi(|x - G(x)|) + (G(x) - F(x))\cdot \Delta_F(x) \\
            &\qquad \geq -(L + \frac{1}{2\delta}\phi'(|x - F(x)|))\epsilon^2.
        \end{split}
    \end{equation}
    Meanwhile, if $x \in A \cap B_{\delta}(\mathcal{I}(F))$, then using that
    \[
    \phi(|x - F(x)|) - \phi(|x - G(x)|) \geq - \epsilon M
    \]
    and 
    \[
    |\Delta_F| \leq | \phi'(|x - F(x)|)| \leq M
    \]
    for $x \in \mathbb{R}^d \setminus \mathcal{I}(F)$,
    we may directly get the bound \begin{equation}\label{eq:lowerboundinAcapBdelta}
        \phi(|x - F(x)|) - \phi(|x - G(x)|) + (G(x) - F(x))\cdot \Delta_F(x) \geq -2\epsilon M
    \end{equation}
    for $\mu$-a.e. $x \in  A \cap B_{\delta}(\mathcal{I}(F))$. Finally, since
    \[
    \phi(|x - F(x)|) - \phi(|x - G(x)|) + (G(x) - F(x))\cdot \Delta_F(x) = 0
    \]
    for $x \in \mathbb{R}^d \setminus A$, integrating \eqref{eq:lowerboundinAcapBdelta} and \eqref{eq:lowerboundinAminusBdelta} over the regions in which they are given and adding the resulting bounds yields
     \begin{align*}
        &\mathscr{J}(F) - \mathscr{J}(G) -\int_{\mathbb{R}^d} (G(x) - F(x))\cdot \Delta_F(x)d\mu(x) \\
        & \qquad \geq - \frac{\epsilon^2}{2\delta}\gamma_F(A \setminus B_{\delta}(\mathcal{I}(F)))
        - L\epsilon^2\mu(A \setminus B_{\delta}(\mathcal{I}(F)))\\
        & \qquad \qquad -2M\epsilon\mu(B_{\delta}(\mathcal{I}(F)) \cap A),
    \end{align*}
    as desired. 
\end{proof}

\section{Bounding the mass of noncut points}\label{sec:boundingmassofnoncutpoints}

A key concept in the remainder of the paper is that of the \textit{noncut points} of an average distance minimizer $\Sigma \in \mathcal{S}_{\ell}$, that is, the points $x \in \Sigma$ such that $\Sigma \setminus \{x\}$ is connected. The importance of noncut points in the average distance problem comes from the idea that the $\nu$-mass of neighbourhoods of noncut points should be closely related to how much one can improve the objective value given $\epsilon$ additional budget. This suggests a natural link between the mass of noncut points and the barycentre field, which will be established in \cref{cor:nontrivialbarycentrefieldandatoms}, a key tool in the proof of existence of an atom \cref{theorem:existenceofatom}

The focus of this section will be on studying how the mass of noncut points can be bounded from below using improvements to the objective value. In \cref{sec:foundationalresults} we will introduce an important idea justifying why we should expect such bounds, before using this idea to prove in \cref{lemma:noncutneighbourhoodshavepositivemass} that $\epsilon$-neighbourhoods of noncut points have $\nu$-mass bounded below by a constant times $\epsilon$ as $\epsilon \to 0$. While \cref{lemma:noncutneighbourhoodshavepositivemass} is certainly not sharp, it will later be used in \cref{theorem:existenceofatom} to bootstrap to existence of an atom. In \cref{sec:thebarycentrefieldandatomicnoncutpoints}, we will prove the main theorem of this section, \cref{theorem:boundingmassofnoncutpoints}, which provides a lower bound on the $\nu$-mass of noncut points in terms of the barycentre field. This generalizes \cite{Obrie25}*{Theorem 3.2} to a much greater range of functions $\phi$, in particular including $\phi(t) = t^p$ for any $p \geq 1$. Using \cref{theorem:boundingmassofnoncutpoints}, we will then prove the existence of an atom for minimizers of the \textit{soft-penalty} average distance problem in \cref{sec:softpenatlyatom}, following the arguments of \cite{Obrie25}*{Section 1.1.2}.

We begin in \cref{sec:foundationalresults} with a discussion of a critical idea from \cite{Buttazzo03}, which will form the basis for our proofs of \cref{lemma:noncutneighbourhoodshavepositivemass} and \cref{theorem:boundingmassofnoncutpoints}.

\subsection{A critical idea}\label{sec:foundationalresults}

Given a connected topological space $X$, we say a point $p \in X$ is a \textit{noncut point} if $X \setminus \{p\}$ is connected. For example, the endpoints $0$ and $1$ of the line segment $[0,1]$ are the only noncut points of this space, while every point of the unit circle $p \in \mathbb{S}^1$ is a noncut point. We remark that every $\Sigma \in \mathcal{S}_{\ell}$ containing at least two points will contain at least two noncut points by \cite{kuratowski}*{§47 Theorem IV.5}.

The first key relationship between the $\nu$-mass of noncut points and improvements to the objective value comes from the following critical idea used by Buttazzo and Stepanov in \cite{Buttazzo03}*{Proposition 7.1}. Given a noncut point $\sigma$ in a minimizer $\Sigma \in \mathcal{S}_{\ell}$, we expect to find a “noncut-neighbourhood” $B_{\epsilon} \subseteq \Sigma$ of $\sigma$ with $\mathcal{H}^1(B_{\epsilon}) = \epsilon$ such that $\Sigma \setminus B_{\epsilon}$ is connected, and thus is in $\mathcal{S}_{\ell-\epsilon}$. Removing $B_{\epsilon}$ from $\Sigma$ should only increase the objective value by at most a constant times $\epsilon\nu(B_{\epsilon})$. So, if we can use the $\epsilon$ additional budget to improve the objective value of $\Sigma \setminus B_{\epsilon}$ by $f(\epsilon)$, then to avoid contradicting the minimality of $\Sigma$ we should have
\[
\nu(B_{\epsilon}) \geq C\frac{f(\epsilon)}{\epsilon}
\]
for some $C$. 

In particular, if we assumed that the barycentre field was nontrivial, then we would be able to decrease the objective value by $C'\epsilon$ for some constant $C' > 0$ by \cref{prop:lipschitzapproximation}. This would give us a lower bound on $\nu(B_{\epsilon})$ that is uniform in $\epsilon$; in particular, by taking $\epsilon$ to $0$, we could conclude that every noncut point must be an atom. This particular argument is made rigorous in \cref{theorem:boundingmassofnoncutpoints}.

In \cref{lemma:noncutneighbourhoodshavepositivemass}, we combine the above idea with \cite{Stepanov04}*{Lemma 3.6}, which says that one can always decrease the objective value by the order $\epsilon^2$ given $\epsilon$ additional budget, to prove that any $\epsilon$ neighbourhood of a noncut point has $\nu$-mass bounded below by $C \epsilon$ for some constant $C$. We begin by stating \cite{Buttazzo03}*{Lemma 6.1}, which provides the existence of the “noncut-neighbourhoods” described in the above sketch. 
\begin{lemma}[Noncut-neighbourhood lemma, \cite{Buttazzo03}*{Lemma 6.1}]\label{lemma:noncutneighbourhood}
  Let $\Sigma$ be a locally connected metric continuum containing more than one point, and let $\sigma \in \Sigma$ be a noncut point of $\Sigma$. Then, there exists a sequence $\{B_n\}_{n \in \mathbb{N}}$ of open subsets of $\Sigma$ satisfying the following conditions:
  \begin{itemize}
    \item[(i)] $\sigma \in B_n$ for all sufficiently large $n$,
    \item[(ii)] $\Sigma \setminus B_n$ is connected for each $n \in \mathbb{N}$,
    \item[(iii)] $\diam(B_n) \to 0$ as $n \to \infty$, and
    \item[(iv)] $B_n$ is connected for every $n$.
  \end{itemize}
    We will refer to the sets $\{B_n\}_{n \in \mathbb{N}}$ as a \textit{noncut-neighbourhood system} for $\sigma$.
\end{lemma}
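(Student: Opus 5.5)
The plan is to build the sets $B_n$ directly from the local connectedness of $\Sigma$ at points away from $\sigma$, using that $\Sigma\setminus\{\sigma\}$ is connected. Since $\Sigma$ is a locally connected metric continuum with more than one point, $\Sigma$ is arcwise connected and locally arcwise connected, and $\Sigma\setminus\{\sigma\}$ is a connected, locally connected open subset of $\Sigma$, hence also arcwise connected. Fix a point $p \in \Sigma\setminus\{\sigma\}$. For $n$ large, so that $1/n < d(p,\sigma)/2$, consider the closed set $K_n := \Sigma\setminus B_{1/n}(\sigma)$, which contains $p$. The natural candidate is to let $C_n$ be the connected component of $\Sigma\setminus\{\sigma\}$-type region containing $p$ and set $B_n := \Sigma\setminus \overline{C_n}$. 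The difficulty is that $K_n$ itself need not be connected, so one must instead capture everything outside a small neighbourhood that can be reached from $p$.

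Concretely, I will proceed as follows. First, let $U_n$ be the component of $\sigma$ in the open set $\Sigma\cap B_{1/n}(\sigma)$. By local connectedness $U_n$ is open and connected, and $\diam U_n \le 2/n$. Second, the complement $\Sigma\setminus U_n$ may be disconnected; I will let $D_n$ be the union of $U_n$ with all components of $\Sigma\setminus U_n$ not containing $p$. Here the main obstacle appears: these extra components may have large diameter. This is where noncutness must enter. I will argue by contradiction: if for some $\eta>0$ and infinitely many $n$ there are components $E_n$ of $\Sigma\setminus U_n$, not containing $p$, with $\diam E_n \ge \eta$, then by Hausdorff (Blaschke) compactness a subsequence converges to a continuum $E \subset \Sigma$ with $\diam E \ge \eta$. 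Each $E_n$ is separated from $p$ in $\Sigma$ by the small set $\overline{U_n}\setminus U_n \subset \overline{B_{1/n}(\sigma)}$. In the limit, this should show that $E\setminus\{\sigma\}$ and $p$ lie in different components of $\Sigma\setminus\{\sigma\}$, contradicting that $\sigma$ is a noncut point. Making this limiting separation rigorous is the hardest step. I would use the arcwise connectedness of $\Sigma\setminus\{\sigma\}$: pick $q\in E\setminus\{\sigma\}$ with $d(q,\sigma)>\eta/3$ and an arc from $q$ to $p$ avoiding $\sigma$. This arc stays at distance at least some $r>0$ from $\sigma$, and by local arcwise connectedness points of $E_n$ near $q$ connect to $q$ by arcs staying away from $\sigma$. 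For $1/n<r$, these arcs avoid $U_n$, which joins $E_n$ to $p$ in $\Sigma\setminus U_n$, a contradiction.

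Having established $\diam D_n \to 0$, I will set $B_n := $ the interior of $D_n$ (or, more carefully, $D_n$ itself after checking that it is open). $D_n$ is open because its complement is the single component of $\Sigma\setminus U_n$ containing $p$, and components of closed subsets of locally connected continua are closed. $D_n$ is connected because each added component $E$ has closure meeting $\overline{U_n}$ (boundary bumping), so $U_n\cup E$ is connected. The complement $\Sigma\setminus D_n$ is a component, hence connected. Finally, $\sigma\in U_n\subset D_n$. This verifies (i)--(iv).
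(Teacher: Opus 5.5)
The paper gives no proof of this lemma; it is quoted from \cite{Buttazzo03}*{Lemma 6.1}. Your argument is therefore a self-contained proof rather than a variant of one in the text, and it is correct. Recall your construction: $U_n$ is the component of $\sigma$ in $\Sigma\cap B_{1/n}(\sigma)$, $C_n$ is the component of $\Sigma\setminus U_n$ containing the fixed point $p$, and $B_n=\Sigma\setminus C_n$. Properties (i), (ii), (iv) then follow from general facts: components of open sets in a locally connected space are open, components of closed sets are closed, and boundary bumping makes each component of $\Sigma\setminus U_n$ meet $\overline{U_n}$. The noncut hypothesis is used exactly where it is needed, to force (iii).

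A few things to tighten.

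First, knowing that every component of $\Sigma\setminus U_n$ missing $p$ has diameter $<\eta$ is not yet enough. You also need boundary bumping to conclude $D_n\subseteq B_{\eta+1/n}(\sigma)$. It is cleaner to argue directly. Suppose $x_n\in D_n$ with $d(x_n,\sigma)\ge\eta$ along a subsequence, and extract $x_n\to q$ by compactness (no Hausdorff limits are needed). Join $q$ to $p$ by an arc $\alpha\subset\Sigma\setminus\{\sigma\}$. Join $x_n$ to $q$ by an arc $\beta_n$ inside an arcwise connected neighbourhood $V\subseteq B_{d(q,\sigma)/2}(q)$. The separation radius is then $r=\min\{\dist(\sigma,\alpha),\, d(q,\sigma)/2\}$. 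For $1/n<r$, the set $\alpha\cup\beta_n$ is a connected subset of $\Sigma\setminus U_n$ containing $x_n$ and $p$, so $x_n\in C_n$, a contradiction.

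Second, justify the arcwise connectedness of $\Sigma\setminus\{\sigma\}$ by the fact that connected open subsets of Peano continua are arcwise connected. The general implication from ``connected and locally connected'' to ``arcwise connected'' requires topological completeness. That holds here, since $\Sigma\setminus\{\sigma\}$ is open in a compact metric space. Alternatively, you can avoid arcs entirely by chaining finitely many connected open sets whose closures miss $\sigma$.

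Third, the construction requires $1/n<d(p,\sigma)$, so that $p\notin U_n$. For the finitely many remaining indices, simply set $B_n:=B_N$ for some admissible $N$.

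Finally, the opening paragraph with $K_n$ and the ``$\Sigma\setminus\{\sigma\}$-type region'' is confused and can be dropped.
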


Given a minimizer $\Sigma \in \mathcal{S}_{\ell}$ and $\pi_{\Sigma}  \in \Pi_{\Sigma}$, define the measure $\rho$ by 
\begin{equation}
    \rho = \rho_{\pi_{\Sigma}} := (\pi_{\Sigma})_{\#}\gamma,
\end{equation}
where we recall the definition of $\gamma$ from \eqref{eq:gammadefinition}. The measure $\rho$ is be helpful to consider due to the fact that if $\phi \in C^1([0, \infty))$ is nondecreasing, then by the triangle inequality we have
\[
\int_{E}|\mathcal{B}_{\pi_{\Sigma}}(\sigma)|d\nu(\sigma) \leq \int_{\pi_{\Sigma}^{-1}(E)}\phi'(|x - \pi_{\Sigma}(x)|) d\mu(x) = \gamma(\pi_{\Sigma}^{-1}(E)) = \rho(E)
\]
for any Borel $E \subseteq \mathbb{R}^d$.
In the proof of \cref{theorem:existenceofatom}, it is necessary to know that the $\rho$-mass of noncut-neighbourhoods can be bounded below by $C\epsilon$, so we will prove \cref{lemma:noncutneighbourhoodshavepositivemass} in terms of $\rho$. However, this easily implies that noncut neighbourhoods have $\nu$-mass bounded below by $C\epsilon$, thanks to the inequality
\[
\rho(E) \leq M\nu(E)
\]
for any Borel $E \subseteq \mathbb{R}^d$, where $M$ is defined in \eqref{eq:mphinotation}.

\begin{proposition}[$\epsilon$-neighbourhoods of noncut points have $\nu$-mass bounded below by $\epsilon$]\label{lemma:noncutneighbourhoodshavepositivemass}
    Assume $\phi \in C^{1,1 }_{\mathrm{loc}}([0, \infty))$ satisfies \eqref{eq:alpha2}, and suppose $\mu(B_{\epsilon}(x)) = o(\epsilon)$ for every $x \in \mathbb{R}^d$. Let $\Sigma \in \mathcal{S}_{\ell}$ be optimal, and let $\sigma^* \in \Sigma$ be a noncut point. Let $\{B_n\}_{n \in \mathbb{N}}$ be a noncut-neighbourhood system for $\sigma^*$, and let $\epsilon_n := \dist(\sigma^*, \partial B_n)$. Then,
    \[
    \lim_{n \to \infty}\frac{\rho(B_n)}{\epsilon_n} > 0,
    \]
    and so in particular 
    \[
    \lim_{n \to \infty}\frac{\nu(B_n)}{\epsilon_n} > 0.
    \]
    \end{proposition}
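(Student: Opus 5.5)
The plan is to compare $\Sigma$ with a competitor built from $\Sigma\setminus B_n$, following the critical idea of \cref{sec:foundationalresults}. Since $B_n$ is connected, open in $\Sigma$, contains $\sigma^*$ and meets $\partial B_n$ at distance $\epsilon_n$ from $\sigma^*$, we have $\mathcal{H}^1(B_n)\geq \epsilon_n$. By (ii) of \cref{lemma:noncutneighbourhood}, $\Sigma_n:=\Sigma\setminus B_n$ is compact and connected, and $\mathcal{H}^1(\Sigma_n)\leq \ell-\epsilon_n$.

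\textbf{Step 1 (cost of removing $B_n$).} I will define $F_n\in\mathscr{F}$ by $F_n=\pi_\Sigma$ off $\pi_\Sigma^{-1}(B_n)$ and $F_n(x)=p_n$ on $\pi_\Sigma^{-1}(B_n)$, where $p_n\in\partial B_n\cap\overline{\Sigma_n}$ is fixed. Then $\img F_n\subset\Sigma_n$, $\|F_n-\pi_\Sigma\|_\infty\leq \diam B_n=:\eta_n\to0$, and $A_{\pi_\Sigma,F_n}\subset\pi_\Sigma^{-1}(B_n)$. I will apply \cref{prop:boundinghigherorderestimatescase1} with $F=\pi_\Sigma$, $G=F_n$ (note $\mu(\mathcal{I}(\pi_\Sigma))=\mu(\Sigma)=0$, since $\mu(\Sigma)=0$ follows from $\mu(B_\epsilon(x))=o(\epsilon)$ and $\mathcal{H}^1(\Sigma)<\infty$ by a covering argument; I may also need to perturb $p_n$ slightly to ensure $\mu(\mathcal{I}(F_n))=0$). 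This gives
\[
\mathscr{J}(\Sigma_n)-\mathscr{J}(\Sigma)\leq \eta_n\rho(B_n)+\tfrac{\eta_n^2}{2\delta}\rho(B_n)+L\eta_n^2\nu(B_n)+2M\eta_n\mu(B_\delta(\Sigma)\cap\pi_\Sigma^{-1}(B_n)).
\]
The first-order term is bounded by $\eta_n\rho(B_n)$ because $|\Delta_F|=\phi'$. The delicate last term is controlled as follows: $\pi_\Sigma^{-1}(B_n)\cap B_\delta(\Sigma)\subset B_{\delta+\eta_n}(\sigma^*)$, so its mass is $o(\delta+\eta_n)$ by the hypothesis on $\mu$. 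I will choose $\delta$ comparable to $\eta_n$.

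\textbf{Step 2 (gain from extra budget).} Next I need a quadratic gain: by \cite{Stepanov04}*{Lemma 3.6}, whose hypothesis is the condition \eqref{eq:alpha2}, for any $\Sigma'\in\mathcal{S}$ that is not optimal in a trivial sense, adding length $\epsilon$ decreases $\mathscr{J}$ by at least $c\epsilon^2$. The constant $c$ must be uniform over $\Sigma_n$, which are Hausdorff-close to $\Sigma$; the constant in that lemma depends only on $\mu$, $\phi$ and a lower bound on $\mathscr{J}(\Sigma')$ (positive since $\mu(\Sigma)=0$ and the $\Sigma_n$ converge to $\Sigma$). Thus $j(\ell)\leq \mathscr{J}(\Sigma_n)-c\epsilon_n^2$. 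Combining with minimality of $\Sigma$ gives
\[
c\epsilon_n^2\leq C\eta_n\rho(B_n)+o(\eta_n^2).
\]

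\textbf{Step 3 and the main obstacle.} The main obstacle is relating $\eta_n=\diam B_n$ to $\epsilon_n=\dist(\sigma^*,\partial B_n)$; the above only yields $\rho(B_n)\gtrsim\epsilon_n^2/\eta_n$. To fix this, I would first pass to a modified system: replace $B_n$ by the connected component of $B_n\cap B_{\epsilon_n}(\sigma^*)$, suitably adjusted, or equivalently choose the noncut neighbourhoods so that $\diam B_n\leq 2\epsilon_n$, which the construction in \cite{Buttazzo03}*{Lemma 6.1} permits. Alternatively, I would replace $\eta_n$ in the first-order term by the actual displacement $|p_n-\pi_\Sigma(x)|$ and choose $p_n$ nearest to $\sigma^*$, so that displacements are $\lesssim\epsilon_n$. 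With $\eta_n\lesssim\epsilon_n$ and $\delta=\epsilon_n$, the $o(\epsilon_n^2)$ error is absorbed, yielding $\liminf\rho(B_n)/\epsilon_n\geq c/C>0$. The $\nu$ statement then follows from $\rho\leq M\nu$.
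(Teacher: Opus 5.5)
\textbf{Gap: Step 3 does not resolve the $\diam B_n$ versus $\epsilon_n$ mismatch.} Your Steps 1--2 follow the same route as the paper: remove $B_n$, spend the freed length via \cite{Stepanov04}*{Lemma 3.6}, and bound the removal cost with \cref{prop:boundinghigherorderestimatescase1}. The paper instead argues by contradiction from $\nu(B_n)=o(\epsilon_n)$ with $\delta=1$. Your direct handling of the error terms, with $\delta\sim\diam B_n$ and $\mu(B_r(\sigma^*))=o(r)$, is fine. Uniformity of the constant in Lemma 3.6 is cleanest as in the paper: since $\Sigma_n\subseteq\Sigma$, one fixed pair $H=B_{\eta/2}(\Sigma)$, $K=\mathbb{R}^d\setminus B_\eta(\Sigma)$ with $\mu(K)>0$ works for every $n$.

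Neither of your proposed repairs works. First, the proposition is about an arbitrary given system $\{B_n\}$. Passing to a ``nicer'' system says nothing about $\rho(B_n)$ unless you build noncut neighbourhoods $B_n'\subseteq B_n$ of size comparable to $\epsilon_n$, and you have not justified this. For instance, a component of $B_n\cap B_{\epsilon_n}(\sigma^*)$ need not have connected complement in $\Sigma$. Second, choosing $p_n$ near $\sigma^*$ cannot make the displacements $O(\epsilon_n)$. Every $\sigma\in B_n$ must be sent into $\Sigma\setminus B_n$, and $\dist(\sigma,\Sigma\setminus B_n)$ can be of order $\diam B_n\gg\epsilon_n$. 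For example, take $B_n$ an arc of a circle reaching angle $\theta_n^2$ on one side of $\sigma^*$ and $\theta_n$ on the other. The same can happen at an endpoint of a tree at which side branches accumulate, which is not excluded a priori for a minimizer.

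\textbf{The missing observation.} The freed budget is larger than you used. Since $B_n$ is connected, $\mathcal{H}^1(B_n)\ge\diam B_n=:\eta_n$: the $1$-Lipschitz map $x\mapsto|x-a|$ with $a\in B_n$ sends $B_n$ onto an interval containing $[0,|b-a|]$ for every $b\in B_n$. Hence $\Sigma_n\in\mathcal{S}_{\ell-\eta_n}$, and Lemma 3.6 yields a gain $c\eta_n^2$ rather than $c\epsilon_n^2$.

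With this, the displacement needs no improvement. Your Step 1 with $\delta=\eta_n$ gives $\mathscr{J}(\Sigma_n)-\mathscr{J}(\Sigma)\le\tfrac32\eta_n\rho(B_n)+o(\eta_n^2)$. Here the term $L\eta_n^2\nu(B_n)$ is $o(\eta_n^2)$ unless $\nu\{\sigma^*\}>0$; in that case $\rho\{\sigma^*\}>0$ by \eqref{eq:alpha2} and there is nothing to prove. Minimality then gives $\liminf_n\rho(B_n)/\eta_n\ge 2c/3$. Since $\epsilon_n\le\eta_n$, this proves the claim for $\rho$, and the $\nu$ statement follows from $\rho\le M\nu$.

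The paper's own estimates also treat $|\pi_{\partial B_n}(\sigma)-\sigma|$ as at most $\epsilon_n$ without comment. So the obstacle you identified is real there too, and the same observation closes it.
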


    \begin{proof}
        For the sake of contradiction, assume that $\nu(B_n) = o(\epsilon_n)$. For each $n \in \mathbb{N}$, define
        \[
        \Sigma_n := \Sigma \setminus B_n,
        \]
        then $\Sigma_n$ is compact, connected, and $\mathcal{H}^1(\Sigma_n)\leq \ell - \epsilon_n$. We now wish to apply \cite{Stepanov04}*{Lemma 3.6} to find some $\Sigma'_n$ which is compact, connected, $\mathcal{H}^1(\Sigma'_n)\leq \ell$, and 
        \begin{equation}\label{eq:paoliniandstepanovimpliesthis}
            \mathscr{J}(\Sigma'_n) \leq \mathscr{J}(\Sigma_n) - C\epsilon_n^2
        \end{equation}
        for some $C > 0$ independent of $n$. Firstly, notice that we are assuming Paolini and Stepanov's condition \eqref{eq:alpha2}. Moreover, since $\mu(\Sigma) = 0$, we may find some $\eta > 0$ so that $\mu(\mathbb{R}^d \setminus B_{\eta}(\Sigma)) > 0$; taking $H = B_{\eta/2}(\Sigma)$ and $K = \mathbb{R}^d \setminus B_{\eta}(\Sigma)$, we see that 
        \[\inf\{\dist(y, H) \ | \ y \in K\} = \frac{\eta}{2} > 0\]
        and $\mu(K) > 0$. So, we see that the assumptions of \cite{Stepanov04}*{Lemma 3.6} are satisfied, thus there is some constant $C > 0$ independent of $n$ so that for all sufficiently large $n$, we may find some competitor $\Sigma'_n \in \mathcal{S}_{\ell}$ satisfying \eqref{eq:paoliniandstepanovimpliesthis}. 

    Now, we use \cref{prop:boundinghigherorderestimatescase1} to bound the difference in objective value between $\Sigma$ and $\Sigma_n$. Let  
    \[
        G_n(x) = \bigg\{ \begin{matrix}
            \pi_{\partial B_n}( \pi_{\Sigma}(x)), & x \in \pi_{\Sigma}^{-1}(B_n), \\
            \pi_{\Sigma}(x), & \text{ otherwise},
        \end{matrix}
        \]
    and let $F = \pi_{\Sigma}$. Notice that $A_{F, G_n} = \pi_{\Sigma}^{-1}(B_n)$, so combining the bound $\gamma_{F} \leq M\mu$ (recall the definition of $M$ from \eqref{eq:mphinotation}) with \cref{prop:boundinghigherorderestimatescase1} and taking $\delta =1$, we have
    \begin{align*}
        \mathscr{J}(\Sigma) - \mathscr{J}(\Sigma_n) &\geq \mathscr{J}(\pi_{\Sigma}) - \mathscr{J}(G_n) \\
        &= \int_{B_n}(\sigma - \pi_{\partial B_n}(\sigma))\cdot \mathcal{B}_{\pi_{\Sigma}}(\sigma)d\nu(\sigma) \\ & \qquad - (\frac{1}{2}M + L)\epsilon_n^2 \nu(B_n) - 2M \epsilon_n \nu(B_n).
    \end{align*}
    So, since $\nu(B_n) = o(\epsilon_n)$ by assumption, we have
    \[
    \mathscr{J}(\Sigma) - \mathscr{J}(\Sigma_n) \geq \int_{B_n}(\sigma - \pi_{\partial B_n}(\sigma))\cdot \mathcal{B}_{\pi_{\Sigma}}(\sigma)d\nu(\sigma) + o(\epsilon_n^2).
    \]
    Now, using the fact that $\Sigma$ is optimal, we have that for each $n$,
    \begin{align*}
        0 &\geq \mathscr{J}(\Sigma) - \mathscr{J}(\Sigma_n') \\&= (\mathscr{J}(\Sigma)- \mathscr{J}(\Sigma_n)) + (\mathscr{J}(\Sigma_n) - \mathscr{J}(\Sigma_n')) \\
        &\geq \int_{B_n}(\sigma - \pi_{\partial B_n}(\sigma))\cdot \mathcal{B}_{\pi_{\Sigma}}(\sigma)d\nu(\sigma) + C\epsilon_n^2  + o(\epsilon_n^2).
    \end{align*}
    Thus, we see that 
    \[
    \lim_{n \to \infty}\frac{1}{\epsilon_n^2}\int_{B_n}( \pi_{\partial B_n}(\sigma) - \sigma)\cdot \mathcal{B}_{\pi_{\Sigma}}(\sigma)d\nu(\sigma) \geq C > 0.
    \]
    But we know that 
    \[
    \frac{1}{\epsilon_n^2}\int_{B_n}( \pi_{\partial B_n}(\sigma) - \sigma)\cdot \mathcal{B}_{\pi_{\Sigma}}(\sigma)d\nu(\sigma) \leq \frac{\rho(B_n)}{\epsilon_n} \leq  M \frac{\nu(B_n)}{\epsilon_n},
    \]
    where we recall the definition of $M$ from \eqref{eq:mphinotation}. So, we conclude that
    \[
    \lim_{n \to \infty}\frac{\nu(B_n)}{\epsilon_n} \geq \lim_{n \to \infty}\frac{1}{M}\frac{\rho(B_n)}{\epsilon_n} > 0,
    \]
    a contradiction. 
\end{proof}

\subsection{The barycentre field and atomic noncut points}\label{sec:thebarycentrefieldandatomicnoncutpoints}

Now, we will proceed with \cref{theorem:boundingmassofnoncutpoints}, the main result of this section. Our proof will again use the argument outlined in \cref{sec:foundationalresults}, this time using the barycentre field to improve the objective value. 

\begin{theorem}[Bounding the mass of noncut points]\label{theorem:boundingmassofnoncutpoints}
Assume $\phi \in C^1([0, \infty))$ is nondecreasing, and assume $\mu(\Sigma) = 0$ for every $\Sigma \in \mathcal{S}$. Suppose $\Sigma \in
  \mathcal{S}_{\ell}$ is an optimizer and $\Sigma$ contains at least two
  points. Let $\pi_\Sigma \in \Pi_\Sigma$ and $\nu =
  (\pi_{\Sigma})_{\#}\mu$. Let $\sigma^* \in \Sigma$ be a noncut
  point. Then, 
  \begin{equation}\label{eq:lowerboundonatomfrombarycentrefield}
    \sup_{\zeta \in \mathrm{Lip}^*(\Sigma)}\int_{\Sigma} \zeta(\sigma)\cdot \mathcal{B}_{\pi_{\Sigma}}(\sigma)d\nu_{\pi_{\Sigma}}(\sigma)\leq \ell|\mathcal{B}_{\pi_{\Sigma}}(\sigma^*)|\nu_{\pi_{\Sigma}}(\sigma^*),
  \end{equation}
  where $\mathrm{Lip}^*(\zeta) = \{\zeta: \Sigma \to \mathbb{R}^d \ | \ \zeta \text{ is 1-Lipschitz}\}$.
\end{theorem}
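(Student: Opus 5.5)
Fix a $1$-Lipschitz $\zeta:\Sigma\to\mathbb{R}^d$, and write $I(\zeta)=\int_\Sigma \zeta\cdot\mathcal{B}_{\pi_\Sigma}\,d\nu$. We may assume $I(\zeta)>0$. The plan follows the critical idea of \cref{sec:foundationalresults}: cut out a small noncut-neighbourhood of $\sigma^*$ to free up length, then spend that length on a Lipschitz perturbation in the direction $\zeta$. Minimality of $\Sigma$ will then force the loss from the cut to dominate the gain from the perturbation.

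\textbf{Step 1 (the cut).} Take a noncut-neighbourhood system $\{B_n\}$ for $\sigma^*$ from \cref{lemma:noncutneighbourhood}, and set $\epsilon_n:=\mathcal{H}^1(B_n)$. We should have $\epsilon_n\to 0$. Replacing $\mathcal{H}^1(B_n)$ by $\diam B_n$ if needed, the removed set has length at least $\epsilon_n$ and diameter at most $\epsilon_n$; here we use that the $B_n$ are open connected sets, so $\mathcal{H}^1(B_n)\ge \diam B_n$. Let $\Sigma_n=\Sigma\setminus B_n$, which lies in $\mathcal{S}_{\ell-\epsilon_n}$. Bound $\mathscr{J}(\Sigma_n)-\mathscr{J}(\Sigma)$ using $G_n=\pi_{\partial B_n}\circ\pi_\Sigma$ on $\pi_\Sigma^{-1}(B_n)$ and $G_n=\pi_\Sigma$ elsewhere. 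Since $|G_n-\pi_\Sigma|\le\diam B_n$, the first-order term is $\int_{B_n}(\pi_{\partial B_n}(\sigma)-\sigma)\cdot\mathcal{B}\,d\nu$. The error comes from \cref{prop:1generalfirstorderapproximationtheorem1}, used only at $C^1$ regularity: by the uniform mean-value lemma it is $o(\epsilon_n)\nu(B_n)$, and the terms near $\mathcal{I}(\pi_\Sigma)$ vanish because $\mu(\Sigma)=0$. We expect
\[
\mathscr{J}(\Sigma_n)-\mathscr{J}(\Sigma)\le \epsilon_n\int_{B_n}|\mathcal{B}|\,d\nu+o(\epsilon_n)\nu(B_n)+o(\epsilon_n).
\]

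\textbf{Step 2 (the perturbation).} Extend $\zeta$ to a $1$-Lipschitz map on $\mathbb{R}^d$ by Kirszbraun, and put $\zeta'=\zeta/\ell$. Then $(\mathrm{id}+\epsilon\zeta')(\Sigma_n)$ is compact, connected, and has length at most $(1+\epsilon/\ell)(\ell-\epsilon_n)\le\ell$ when $\epsilon=\epsilon_n$. Apply \cref{theorem:barycentregradient} to $\Sigma_n$, or more robustly \cref{prop:1generalfirstorderapproximationtheorem1} with $F=G_n$ and $F_n=G_n+\epsilon_n\zeta'\circ G_n$. This gives
\[
\mathscr{J}(\Sigma_n)-\mathscr{J}\big((\mathrm{id}+\epsilon_n\zeta')(\Sigma_n)\big)\ge \frac{\epsilon_n}{\ell}\int \zeta\circ G_n\cdot\Delta_{G_n}\,d\mu+o(\epsilon_n).
\]
The integrand differs from $\zeta\circ\pi_\Sigma\cdot\Delta_{\pi_\Sigma}$ only on $\pi_\Sigma^{-1}(B_n)$, where it is bounded by $2M\|\zeta\|_\infty$, and that set has $\mu$-mass $\nu(B_n)\to\nu(\{\sigma^*\})$. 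Here we must track the dependence of the error on the two parameters. The cleanest route is to do the whole comparison with one combined map $H_n=G_n+\epsilon_n\zeta'\circ G_n$ against $F=\pi_\Sigma$. Then a single application of the first-order approximation along the sequence gives a limit as $n\to\infty$, with pointwise limit $\lim (H_n-\pi_\Sigma)/\epsilon_n$ equal to $\zeta'\circ\pi_\Sigma$ off $\pi_\Sigma^{-1}(\bigcap B_n)$. Since $\diam B_n\to0$, we have $\bigcap_n \overline{B_n}\subseteq\{\sigma^*\}$.

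\textbf{Step 3 (conclusion).} Minimality gives $0\ge \mathscr{J}(\Sigma)-\mathscr{J}(H_n\text{-image})$. Divide by $\epsilon_n$ and let $n\to\infty$. Off $\pi_\Sigma^{-1}(\sigma^*)$ the contribution tends to $\frac1\ell\int_{\Sigma\setminus\{\sigma^*\}}\zeta\cdot\mathcal{B}\,d\nu$. On $\pi_\Sigma^{-1}(\sigma^*)$, the displacement $\pi_{\partial B_n}(\sigma^*)-\sigma^*$ together with the $\zeta'$ term gives a contribution of at least $-(1+\frac{\|\zeta\|}{\ell}\cdot 0)\,|\mathcal{B}(\sigma^*)|\nu(\sigma^*)$ after normalization. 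Here we use $|\pi_{\partial B_n}(\sigma^*)-\sigma^*|\le\epsilon_n$, and we choose $\zeta(\sigma^*)=0$ by subtracting a constant. That is legitimate because the net barycentre field vanishes by \cref{lemma:netbarycentrefieldiszero}, so $I(\zeta)$ is unchanged. This yields $I(\zeta)/\ell\le|\mathcal{B}(\sigma^*)|\nu(\sigma^*)$, which is \eqref{eq:lowerboundonatomfrombarycentrefield}.

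The main obstacle is Step 2/3: justifying the limit of the combined displacement on $\pi_\Sigma^{-1}(B_n\setminus\{\sigma^*\})$, whose mass vanishes but on which the displacement is of order $\epsilon_n$. The dominated convergence in \cref{prop:1generalfirstorderapproximationtheorem1} handles this, given a uniform bound $|H_n-\pi_\Sigma|\le C\epsilon_n$.
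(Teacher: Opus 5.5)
Your proposal is correct and is essentially the paper's argument: remove a noncut-neighbourhood $B_n$, push $\Sigma\setminus B_n$ by $\frac{\epsilon_n}{\ell}\zeta$ (the paper uses $\frac{\epsilon_n}{\ell-\epsilon_n}\zeta$), and apply \cref{prop:1generalfirstorderapproximationtheorem1} to the combined map $H_n$, which coincides with the paper's $F_n$. Your pointwise localization to $\pi_\Sigma^{-1}(\sigma^*)$ replaces the paper's $C_\delta$ bookkeeping, the normalization $\zeta(\sigma^*)=0$ is harmless but not needed, and the one detail to make explicit (the paper also glosses it) is passing to a subsequence along which $(\pi_{\partial B_n}(\sigma^*)-\sigma^*)/\epsilon_n$ converges, since otherwise the $\liminf$ inside the integral only yields $-\rho(\{\sigma^*\})$ instead of $-|\mathcal{B}_{\pi_\Sigma}(\sigma^*)|\nu(\{\sigma^*\})$.
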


\begin{proof}
    If $\mathcal{B}_{\pi_{\Sigma}}$ is trivial then the claim clearly holds, so assume $\mathcal{B}_{\pi_{\Sigma}}$ is nontrivial. Our plan is to remove a noncut-neighbourhood of $\sigma^*$ from $\Sigma$, gaining back $\epsilon$ budget, and then use the barycentre field to construct a competitor to $\Sigma$ with this $\epsilon$ extra length. 
    
    By \cref{lemma:noncutneighbourhood} $\{B_n\}_{n \in \mathbb{N}}$ be a noncut-neighbourhood system for $\sigma^*$, and let $\epsilon_n := \diam (B_n)$. By passing to a subsequence (not relabelled), we may assume that $\sigma^* \in B_n$ for all $n \in \mathbb{N}$. Moreover, since $\epsilon_n \to 0$ as $n \to \infty$, we may assume that $\epsilon_n < \ell $ for all $n \in \mathbb{N}$. Let $\zeta \in \mathrm{Lip}^*(\Sigma)$, and for each $n \in \mathbb{N}$ define the competitor 
    \[
    \Sigma_n := \{\sigma + \frac{\epsilon_n}{\ell - \epsilon_n}\zeta(\sigma) \ |\ \sigma \in \Sigma \setminus B_n \}.
    \]
    The set $\Sigma \setminus B_n$ is compact and connected by our choice of $B_n$, and $\sigma \mapsto \sigma + \frac{\epsilon_n}{\ell - \epsilon_n}\zeta(\sigma)$ is continuous, so we see that $\Sigma_n$ is compact and connected. Moreover, since $\zeta$ is $1$-Lipschitz, the map $\sigma \mapsto \sigma + \frac{\epsilon_n}{\ell - \epsilon_n}\zeta(\sigma)$ is $1 + \frac{\epsilon_n}{\ell - \epsilon_n}$-Lipschitz, and thus
    \begin{align*}
        \mathcal{H}^1(\Sigma_n) &\leq (1 + \frac{\epsilon_n}{\ell - \epsilon_n})\mathcal{H}^1(\Sigma \setminus B_n) \\
        &\leq\ell- \epsilon_n + \epsilon_n \\
        &= \ell,
    \end{align*}
    so $\Sigma_n \in \mathcal{S}_{\ell}$ for each $n \in \mathbb{N}$.

    Now that we have constructed our competitor $\Sigma_n$, we will approximate $\mathscr{J}(\Sigma_n)$. Fix $\delta > 0$, and define $C_{\delta} := B_{\delta}(\sigma^*) \cap \Sigma$. Then, since $\epsilon_n \to 0$ as $n \to \infty$, we may pass to a further subsequence to assume that $B_n \subseteq C_{\delta}$ for all $n \in \mathbb{N}$. Denote $H_n(x) := \pi_{\Sigma \setminus B_n}(\pi_{\Sigma}(x))$, and define
    \[
F_n := \Big \{\begin{matrix}
    \pi_{\Sigma}(x) + \frac{\epsilon_n}{\ell - \epsilon_n} \zeta(\pi_{\Sigma}(x)), & x \in \pi_{\Sigma}^{-1}(\Sigma \setminus C_{\delta}), \\
   H_n(x) + \frac{\epsilon_n}{\ell - \epsilon_n} \zeta(H_n(x)), & x \in \pi_{\Sigma}^{-1}(C_{\delta}).
\end{matrix}
\]
We wish to approximate $\mathscr{J}(\Sigma)$ by $\mathscr{J}(F_n)$. Indeed, if $x \in \pi_{\Sigma}^{-1}(C_{\delta})$, then $H_n(x) \in \Sigma \setminus B_n$ by definition, whereas if $x \in \pi_{\Sigma}^{-1}(\Sigma \setminus C_{\delta})$, then since $B_n \subseteq C_{\delta}$, $\pi_{\Sigma}(x) \in \Sigma \setminus B_{n}$. In either case, we have $F_n \in \Sigma_n$, so $\img(F_n) \subseteq \Sigma_n$. Therefore, by \cref{remark:extensionofaveragedistancefunctional}, we know that $\mathscr{J}(F_n) \geq \mathscr{J}(\Sigma_n)$. Recalling the notation $\mathscr{J}|_{A}$ from \eqref{eq:restrictedadfnotation}, we have that
\begin{align*}
    \mathscr{J}(\Sigma) - \mathscr{J}(\Sigma_n) &\geq \mathscr{J}(\Sigma) - \mathscr{J}(F_n)\\ &= \mathscr{J}(\Sigma) - \mathscr{J}|_{\pi_{\Sigma}^{-1}(\Sigma \setminus C_{\delta})}(\pi_{\Sigma} + \frac{\epsilon_n}{\ell - \epsilon_n}\zeta\circ \pi_{\Sigma}) \\ & \qquad -  \mathscr{J}|_{\pi_{\Sigma}^{-1}(C_{\delta})}(H_n + \frac{\epsilon_n}{\ell - \epsilon_n}\zeta \circ H_n).
\end{align*}
Notice that 
\begin{align*}
    \mathscr{J}|_{\pi_{\Sigma}^{-1}(\Sigma \setminus C_{\delta})}(\pi_{\Sigma} + \frac{\epsilon_n}{\ell - \epsilon_n}\zeta\circ \pi_{\Sigma}) &= \mathscr{J}(\pi_{\Sigma} + \frac{\epsilon_n}{\ell - \epsilon_n}\zeta\circ \pi_{\Sigma}) \\
    &\qquad - \mathscr{J}|_{\pi_{\Sigma}^{-1}(C_{\delta})}(\pi_{\Sigma} + \frac{\epsilon_n}{\ell - \epsilon_n}\zeta\circ \pi_{\Sigma}),
\end{align*}
and so we can divide our lower bound on $\mathscr{J}(\Sigma) - \mathscr{J}(\Sigma_n)$ into three terms which are amenable to bounding using \cref{prop:1generalfirstorderapproximationtheorem1}:
\begin{align*}
    \mathscr{J}(\Sigma) - \mathscr{J}(\Sigma_n) &\geq \mathscr{J}(\Sigma) - \mathscr{J}(\pi_{\Sigma} + \frac{\epsilon_n}{\ell - \epsilon_n}\zeta\circ \pi_{\Sigma}) \\
    &\qquad + \mathscr{J}|_{\pi_{\Sigma}^{-1}(C_{\delta})}(\pi_{\Sigma} + \frac{\epsilon_n}{\ell - \epsilon_n}\zeta\circ \pi_{\Sigma}) - \mathscr{J}|_{\pi_{\Sigma}^{-1}(C_{\delta})}(\pi_{\Sigma}) \\
    &\qquad + \mathscr{J}|_{\pi_{\Sigma}^{-1}(C_{\delta})}(\pi_{\Sigma}) - \mathscr{J}_{\pi_{\Sigma}^{-1}(C_{\delta})}(H_n + \frac{\epsilon_n}{\ell - \epsilon_n}\zeta\circ H_n).
\end{align*}
For the first term, we apply \cref{prop:1generalfirstorderapproximationtheorem1} to see that 
\begin{equation}\label{eq:firstterminlowerboundonnoncutpoints}
    \liminf_{n \to \infty}\frac{\mathscr{J}(\Sigma) - \mathscr{J}(\pi_{\Sigma} + \frac{\epsilon_n}{\ell - \epsilon_n})}{\frac{\epsilon_n}{\ell - \epsilon_n}} \geq \int_{\Sigma}\zeta(\sigma)\cdot \mathcal{B}_{\pi_{\Sigma}}(\sigma)d\nu_{\pi_{\Sigma}}(\sigma).
\end{equation}
Notice that $\mathscr{J}|_{A} = \mathscr{J}^{\mu \llcorner A}$, so we have no issues applying \cref{prop:1generalfirstorderapproximationtheorem1} to the second term to see that
\begin{align*}
    &\limsup_{n \to \infty}\frac{\mathscr{J}|_{\pi_{\Sigma}^{-1}(C_{\delta})}(\Sigma) - \mathscr{J}|_{\pi_{\Sigma}^{-1}(C_{\delta})}(\pi_{\Sigma} + \frac{\epsilon_n}{\ell - \epsilon_n}\zeta \circ \pi_{\Sigma})}{\frac{\epsilon_n}{\ell - \epsilon_n}} \\
    & \qquad \leq \int_{C_{\delta}}\zeta(\sigma)\cdot \mathcal{B}_{\pi_{\Sigma}}(\sigma)d\nu_{\pi_{\Sigma}}(\sigma),
\end{align*}
and so in particular
\begin{equation}\label{eq:secondterminlowerboundonnoncutpoints}
\begin{split}
    &\liminf_{n \to \infty}\frac{ \mathscr{J}|_{\pi_{\Sigma}^{-1}(C_{\delta})}(\pi_{\Sigma} + \frac{\epsilon_n}{\ell - \epsilon_n}\zeta \circ \pi_{\Sigma}) - \mathscr{J}|_{\pi_{\Sigma}^{-1}(C_{\delta})}(\Sigma)}{\frac{\epsilon_n}{\ell - \epsilon_n}} \\& \qquad \geq -  \int_{C_{\delta}}\zeta(\sigma)\cdot \mathcal{B}_{\pi_{\Sigma}}(\sigma)d\nu_{\pi_{\Sigma}}(\sigma).
\end{split}
\end{equation}

Finally, we approximate the third term. Notice that for each $x \in \pi_{\Sigma}^{-1}(C_{\delta})$,
\[
|H_n(x)  - \pi_{\Sigma}(x)| \leq \epsilon_n = \frac{\epsilon_n}{\ell - \epsilon_n}(\ell - \epsilon_n).
\]
In particular, we have that 
\[
\lim_{n \to \infty}\zeta(H_n(x)) = \zeta(\pi_{\Sigma}(x)),
\]
and
\[
\lim_{n \to \infty}\frac{|H_n(x)  - \pi_{\Sigma}(x)|}{\frac{\epsilon_n}{\ell - \epsilon_n}} =\ell.
\]
So, applying \cref{prop:1generalfirstorderapproximationtheorem1}, we have
\begin{equation}\label{eq:thirdterminlowerboundonnoncutpoints}
   \begin{split}
        &\liminf_{n \to \infty}\frac{\mathscr{J}|_{\pi_{\Sigma}^{-1}(C_{\delta})}(\Sigma) - \mathscr{J}|_{\pi_{\Sigma}^{-1}(C_{\delta})}(H_n + \frac{\epsilon_n}{\ell - \epsilon_n}\zeta \circ H_n)}{\frac{\epsilon_n}{\ell - \epsilon_n}} \\
        & \qquad \geq - \ell\int_{C_{\delta}}|\mathcal{B}_{\pi_{\Sigma}}(\sigma)|d\nu_{\pi_{\Sigma}}(\sigma) + \int_{C_{\delta}}\zeta(\sigma)\cdot \mathcal{B}_{\pi_{\Sigma}}(\sigma)d\nu_{\pi_{\Sigma}}(\sigma).
   \end{split}
\end{equation}
Combining \eqref{eq:firstterminlowerboundonnoncutpoints}, \eqref{eq:secondterminlowerboundonnoncutpoints}, and \eqref{eq:thirdterminlowerboundonnoncutpoints}, we conclude that
\[
\liminf_{n \to \infty}\frac{\mathscr{J}(\Sigma)- \mathscr{J}(\Sigma_n)}{\frac{\epsilon_n}{\ell - \epsilon_n}} \geq \int_{\Sigma}\zeta(\sigma)\cdot \mathcal{B}_{\pi_{\Sigma}}(\sigma)d\nu_{\pi_{\Sigma}}(\sigma) -\ell \int_{C_{\delta}}|\mathcal{B}_{\pi_{\Sigma}}(\sigma)|d\nu_{\pi_{\Sigma}}(\sigma).
\]
To avoid contradicting the optimality of $\Sigma$, we must therefore have
\[
\int_{\Sigma}\zeta(\sigma)\cdot \mathcal{B}_{\pi_{\Sigma}}(\sigma)d\nu_{\pi_{\Sigma}}(\sigma)\leq \ell \int_{C_{\delta}}|\mathcal{B}_{\pi_{\Sigma}}(\sigma)|d\nu_{\pi_{\Sigma}}(\sigma)
\]
for all $\delta > 0$. Taking $\delta \to 0$, we thus conclude that
\[
\int_{\Sigma}\zeta(\sigma)\cdot \mathcal{B}_{\pi_{\Sigma}}(\sigma)d\nu_{\pi_{\Sigma}}(\sigma)\leq \ell|\mathcal{B}_{\pi_{\Sigma}}(\sigma^*)|\nu_{\pi_{\Sigma}}(\sigma^*),
\]
as desired. 
\end{proof}

\begin{corollary}\label{cor:boundingmassofnoncutpoints}
    Assume $\phi \in C^1([0, \infty))$ is nondecreasing, and let $\Sigma$ be a solution to the average distance problem. Then, there exists some constant $C > 0$ such that for every noncut point $\sigma^* \in \Sigma$,

    \begin{equation}\label{eq:corboundingmassofnoncutpoints}
        C \int_{\Sigma}|\mathcal{B}_{\pi_{\Sigma}}(\sigma)| d\nu_{\pi_{\Sigma}}(\sigma)\leq |\mathcal{B}_{\pi_{\Sigma}}(\sigma^*)|\nu_{\pi_{\Sigma}}(\sigma^*).
    \end{equation}
    In particular, if $\mathcal{B}_{\pi_{\Sigma}}$ is nontrivial, then every noncut point of $\Sigma$ is an atom, and $\Sigma$ has finitely many noncut points. 
\end{corollary}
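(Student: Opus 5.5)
The plan is to deduce the corollary from \cref{theorem:boundingmassofnoncutpoints} by choosing a good $1$-Lipschitz test field $\zeta$. If $\mathcal{B}_{\pi_{\Sigma}}$ is trivial, the left side of \eqref{eq:corboundingmassofnoncutpoints} vanishes and there is nothing to prove. We may also assume $\Sigma$ has at least two points, since otherwise $\Sigma$ is a point, which is trivial or degenerate. We use the standing assumption that $\mu$ does not charge sets of finite $\mathcal{H}^1$-measure, as in \cref{theorem:introboundingmassofnoncutpoints}; this is what allows us to apply the theorem.

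So assume $\mathcal{B}_{\pi_{\Sigma}}$ is nontrivial, so that $I := \int_\Sigma |\mathcal{B}_{\pi_{\Sigma}}|^2\,d\nu > 0$. By \cref{prop:lipschitzapproximation} there is a Lipschitz $\xi$ with
\[
\int_\Sigma \xi\cdot \mathcal{B}_{\pi_{\Sigma}}\,d\nu > \tfrac12 I.
\]
Let $L_\xi$ be a Lipschitz constant of $\xi$ and put $\zeta = \xi/L_\xi \in \mathrm{Lip}^*(\Sigma)$. Then \eqref{eq:lowerboundonatomfrombarycentrefield} gives, for every noncut point $\sigma^*$,
\[
\frac{I}{2L_\xi} \le \ell\, |\mathcal{B}_{\pi_{\Sigma}}(\sigma^*)|\,\nu(\sigma^*).
\]
The field $\xi$ depends only on $\Sigma$ and $\pi_{\Sigma}$, not on $\sigma^*$, so this constant is uniform over noncut points.

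To convert $I$ into the $L^1$ norm, use $|\mathcal{B}_{\pi_{\Sigma}}| \le M$. This gives the wrong direction, $I \le M\|\mathcal{B}\|_{L^1}$, so instead we use that $\|\mathcal{B}\|_{L^1(\nu)}$ is a fixed finite positive number. Setting
\[
C := \frac{I}{2L_\xi\,\ell\,\|\mathcal{B}_{\pi_{\Sigma}}\|_{L^1(\nu)}} > 0
\]
yields \eqref{eq:corboundingmassofnoncutpoints}, since $C$ may depend on $\Sigma$. The only care needed is that $\zeta$ is fixed once and for all, and this is the step where uniformity in $\sigma^*$ comes from.

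For the consequences, if $\mathcal{B}_{\pi_{\Sigma}}$ is nontrivial then the left side of \eqref{eq:corboundingmassofnoncutpoints} is a fixed positive number $c$. Hence every noncut point satisfies $\nu(\sigma^*) \ge c/|\mathcal{B}_{\pi_{\Sigma}}(\sigma^*)| \ge c/M > 0$, so it is an atom, using \cref{cor:uniquenessofclosestpointprojections} to identify atoms of $\nu$ with atoms of $\Sigma$. Since $\nu$ is a probability measure, it has at most $M/c$ points of mass at least $c/M$, so there are finitely many noncut points.
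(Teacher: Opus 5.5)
Your proposal is correct and follows essentially the paper's argument: approximate $\mathcal{B}_{\pi_{\Sigma}}$ by a Lipschitz field, rescale it to a $1$-Lipschitz $\zeta$, and apply \cref{theorem:boundingmassofnoncutpoints} uniformly in $\sigma^*$. The paper chooses $\xi$ with $\int_\Sigma \xi\cdot\mathcal{B}_{\pi_{\Sigma}}\,d\nu\ge\frac12\int_\Sigma|\mathcal{B}_{\pi_{\Sigma}}|\,d\nu$ directly rather than absorbing $I/\|\mathcal{B}_{\pi_{\Sigma}}\|_{L^1(\nu)}$ into the constant, and you are right that the hypothesis $\mu(\Sigma)=0$ for $\Sigma\in\mathcal{S}$, needed for that theorem, is used implicitly. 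One small fix: to get that $\sigma^*$ is an atom of $\Sigma$ from $\nu_{\pi_{\Sigma}}(\{\sigma^*\})>0$, use the inclusion $\pi_{\Sigma}^{-1}\{\sigma^*\}\subseteq\{x : d(x,\sigma^*)=\dist(x,\Sigma)\}$ instead of \cref{cor:uniquenessofclosestpointprojections}, since that corollary requires $\phi$ strictly increasing.
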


\begin{proof}
    Lipschitz functions are dense in $L^2(\mu)$, so we may take $\xi$ to be a Lipschitz map so that
    \[
    \int_{\Sigma}\xi(\sigma)\cdot \mathcal{B}_{\pi_{\Sigma}}(\sigma) d\nu_{\pi_{\Sigma}}(\sigma) \geq \frac{1}{2}\int_{\Sigma} |\mathcal{B}_{\pi_{\Sigma}}(\sigma)| d\nu_{\pi_{\Sigma}}(\sigma).
    \]
    Taking $L$ to be a nonzero Lipschitz constant for $\xi$, we see that $\zeta := \frac{1}{L} \xi$ is 1-Lipschitz, and so by \cref{theorem:boundingmassofnoncutpoints}, for any noncut point $\sigma^* \in \Sigma$,
    \[
    \frac{1}{2L\ell}\int_{\Sigma}|\mathcal{B}_{\pi_{\Sigma}}(\sigma)|d\nu_{\pi_{\Sigma}} \leq \frac{1}{\ell}\int_{\Sigma}\zeta(\sigma)\cdot \mathcal{B}_{\pi_{\Sigma}}(\sigma)d\nu_{\pi_{\Sigma}}(\sigma) \leq |\mathcal{B}_{\pi_{\Sigma}}(\sigma^*)| \nu_{\pi_{\Sigma}}(\sigma^*).
    \]
    This proves that \eqref{eq:corboundingmassofnoncutpoints} holds.
    
    If $\mathcal{B}_{\pi_{\Sigma}}$ is nontrivial, then this inequality uniformly bounds $|\mathcal{B}_{\pi_{\Sigma}}(\sigma^*)|$ away from $0$ for noncut points $\sigma^*$, and in particular every noncut point of $\Sigma$ must be an atom. Moreover, since $|\mathcal{B}_{\pi_{\Sigma}}(\sigma)|$ can be bounded from above by $M_{\phi}$ for $\nu$-a.e. $\sigma$ (recall the notation $M_{\phi}$ from \eqref{eq:mphinotation}), we conclude that \eqref{eq:corboundingmassofnoncutpoints} provides a uniform lower bound on the $\nu$-mass of the noncut points $\sigma^*$ of $\Sigma$, and in particular, since $\nu(\Sigma) =1$, $\Sigma$ has only finitely many noncut points in this case.
\end{proof}

\begin{remark}\label{remark:noteveryatomisanoncutpoint}
Notice that while nontriviality of the barycentre field implies that every noncut point is an atom, it does not seem to be true that every atom is necessarily a noncut point: indeed, the blow-up analysis of Santambrogio and Tilli \cite{Santambrogio05} does not rule out the existence of atoms at corner points of minimizers, and Buttazzo, Manini, and Stepanov \cite{Buttazzo09}*{Proposition 4.1} have constructed an example $\Sigma$ which contains an atom at a corner point, and is stationary for the soft-penalty average distance functional \eqref{eq:softpenaltyaveragedistancefunctional}. 
\end{remark}

\subsection{Existence of an atom for soft-penalty minimizers}\label{sec:softpenatlyatom}

Following the arguments of \cite{Obrie25}*{Section 1.1.2}, we now prove existence of an atom for minimizers of the \textit{soft-penalty} average distance problem. 

For any $\lambda > 0$, define the soft-penalty average distance functional by 
\begin{equation}\label{eq:softpenaltyaveragedistancefunctional}
    (\mathscr{J}^{\mu}_{\phi})^{\lambda}(\Sigma) := \mathscr{J}^{\mu}_{\phi}(\Sigma) + \lambda \mathcal{H}^1(\Sigma),
\end{equation}
for $\Sigma \subseteq \mathbb{R}^d$ compact. Define 
\begin{equation}\label{eq:softpenaltyconstraintset}
    \mathcal{S} := \bigcup_{l \geq 0}\mathcal{S}_{\ell} = \{\Sigma \subseteq \mathbb{R}^d \ | \ \Sigma \text{ is compact, connected, and }\mathcal{H}^1(\Sigma) < \infty\},
\end{equation}
then the \textit{soft-penalty average distance problem} asks us to find $\Sigma_{\lambda} \in \mathcal{S}$ such that
\begin{equation}\label{eq:softpenaltyadp}    (\mathscr{J}_{\phi}^{\mu})^{\lambda}(\Sigma_{\lambda}) = \inf_{\Sigma \in \mathcal{S}}(\mathscr{J}_{\phi}^{\mu})^{\lambda}(\Sigma).
\end{equation}
\begin{remark}[Comparison of hard-constraint and soft-penalty average distance problems]\label{remark:comparisonofhardconstraintandsoftpenalty}
    Every minimizer $\Sigma_{\lambda} \in \mathcal{S}$ of the soft-penalty average distance problem solves the hard-constraint average distance problem with length budget $\ell = \mathcal{H}^1(\Sigma_{\lambda})$. However, it is \textit{not} known under what conditions, if any, a hard-constraint average distance minimizer will also solve a soft-penalty problem for some $\lambda > 0$, see \cite{Lemenant12}*{Remark 22}. 
    
    If $\mathcal{H}^1(\supp  \mu) < \infty$, then one can easily find hard-constraint minimizers which do not solve a soft-penalty problem. Indeed, for any compactly supported $\mu$ with $\mathcal{H}^1(\supp \mu) < \infty$, one can find a compact, connected set $\Sigma \subset \mathbb{R}^d$ with $\Sigma \supset \supp \mu$ and $\mathcal{H}^1(\Sigma) < \infty$. Such a $\Sigma$ will certainly minimize $\mathscr{J}^{\mu}$ over $\mathcal{S}_{\mathcal{H}^1(\Sigma)}$, but will not solve the soft-penalty problem for any $\lambda$ if there is some $\Sigma ' \in \mathcal{S}_{\ell}$ with $\Sigma' \supset \supp \mu$ and $\mathcal{H}^1(\Sigma')<\mathcal{H}^1(\Sigma)$.

    Surprisingly, we can also find examples of hard-constraint minimizers which also minimize $\mathcal{H}^1(\Sigma')$ over all $\Sigma' \in \mathcal{S}$ containing $\supp \mu$, but are not soft-penalty minimizers. To do so, take $d \geq 2$, $\phi(t) = t^p$ for $p = 1$ or $p \geq 2$, $\Sigma^+ = \bigcup_{i=1}^d \{te_i \ | \ t \in [-\frac{1}{2d}, \frac{1}{2d}]\}$, and let $\mu = \mathcal{H}^1 \llcorner \Sigma^+$ be the uniform measure on $\Sigma^+$. Clearly, $\Sigma^+$ minimizes the average distance functional over $\mathcal{S}_1$, and moreover $\Sigma^+$ minimizes $\mathcal{H}^1(\Sigma^+)$ over all $\Sigma' \in \mathcal{S}$ containing $\supp \mu$. Despite this, $\Sigma^+$ cannot solve the soft-penalty average distance problem for any $\lambda > 0$. Indeed, since $\mu$ is compactly supported, any minimizer of a soft-penalty average distance problem cannot contain a point of order (see \eqref{eq:orddefinition}) greater than $3$; this is guaranteed by \cite{Slepcev13}*{Lemma 3.4} in the case that $\phi(t) = t$, and \cite{Obrie25}*{Corollary 1.5} in the case that $\phi(t) = t^p$ for $p \geq 2$. The point $0 \in \Sigma^+$ has order $2d \geq 4$, showing it is not a soft-penalty minimizer for any $\lambda$.

    This example helps motivate why proving the topological characterization for the hard-constraint problem is more difficult than for the soft-penalty problem: while \cite{Slepcev13} and \cite{Obrie25} prove the soft-penalty topological characterization for arbitrary compactly supported probability measures in the cases $\phi(t) = t^p$ for $p = 1$ and $p \geq 2$ respectively, the proof of the hard-constraint topological characterization \cref{theorem:stepanovtopologicalcharacterization} must crucially use the fact that $\mu$ is not supported on a $1$-dimensional set to avoid examples such as $\Sigma^+$. 
\end{remark}

Now, we generalize the results of \cite{Obrie25}*{Section 1.1.2} on soft-penalty average distance minimizers. The proofs we give are the same as in \cite{Obrie25}: the improvement of our results comes from \cref{theorem:barycentregradient} and \cref{theorem:boundingmassofnoncutpoints}. We begin by proving nontriviality of the barycentre field for soft-penalty minimizers, something which is significantly easier in the soft-penalty case than in the hard-constraint case. Note that \cref{prop:barycentrenontrivialityforsoftpenalty} and \cite{Obrie25}*{Proposition 1.4} are predated by Buttazzo, Manini, and Stepanov's stronger result \cite{Buttazzo09}*{Proposition 2.1} in the case $\phi(t) = t$: the crucial step in our approach for yielding existence of an atom comes from using \cref{cor:boundingmassofnoncutpoints} to show that nontriviality of the barycentre field implies existence of an atom. 
\begin{proposition}[Barycentre nontriviality for soft-penalty minimizers]\label{prop:barycentrenontrivialityforsoftpenalty}
    Assume $\phi \in C^1([0, \infty))$ is nondecreasing. Suppose that $\mu(\Sigma) = 0$ for all $\Sigma \in \mathcal{S}$. Let $\lambda > 0$, and let $\Sigma = \Sigma_{\lambda}$ be a solution to \eqref{eq:softpenaltyadp} with $\mathcal{H}^1(\Sigma) > 0$. Then, for any $\pi_{\Sigma} \in \Pi_{\Sigma}$, $\mathcal{B}_{\pi_{\Sigma}}$ is nontrivial. 
\end{proposition}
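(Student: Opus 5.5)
The plan is to argue by contradiction: assume some $\pi_{\Sigma} \in \Pi_{\Sigma}$ has trivial barycentre field, and build a competitor that shrinks $\Sigma$ by a homothety. The length saving should outweigh the change in $\mathscr{J}$, which will be only $o(\epsilon)$.

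\textbf{Construction.} Fix any point $\sigma_0 \in \Sigma$ and, for small $\epsilon > 0$, set
\[
\Sigma_\epsilon := \{\sigma + \epsilon \xi(\sigma) \ | \ \sigma \in \Sigma\}, \qquad \xi(\sigma) := \sigma_0 - \sigma .
\]
Then $\Sigma_\epsilon = \sigma_0 + (1-\epsilon)(\Sigma - \sigma_0)$ is a homothetic copy of $\Sigma$ with ratio $1-\epsilon$. Hence $\Sigma_\epsilon$ is compact and connected, and
\[
\mathcal{H}^1(\Sigma_\epsilon) = (1-\epsilon)\mathcal{H}^1(\Sigma).
\]
Since $\xi$ is continuous, it may be rescaled by $1/\max(1,\diam \Sigma)$ to satisfy $\|\xi\|_\infty \leq 1$; this only rescales $\epsilon$. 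Also $\mu(\Sigma) = 0$ holds by hypothesis.

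\textbf{Key step.} The inequality \eqref{eq:barycentregradient} of \cref{theorem:barycentregradient} applies and gives
\[
\lim_{\epsilon\to0^+}\frac{\mathscr{J}(\Sigma)-\mathscr{J}(\Sigma_\epsilon)}{\epsilon} \geq \int_\Sigma \xi\cdot\mathcal{B}_{\pi_\Sigma}\,d\nu_{\pi_\Sigma} = 0,
\]
because $\mathcal{B}_{\pi_\Sigma} = 0$ $\nu$-a.e. by triviality. In other words, $\mathscr{J}(\Sigma_\epsilon) \leq \mathscr{J}(\Sigma) + o(\epsilon)$. (The paper writes $\lim$ in that proposition, but its proof controls the $\liminf$; a lower bound on the $\liminf$ is all we need.)

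\textbf{Conclusion.} Combining the two estimates,
\[
(\mathscr{J})^\lambda(\Sigma_\epsilon) \leq (\mathscr{J})^\lambda(\Sigma) - \lambda\epsilon\mathcal{H}^1(\Sigma) + o(\epsilon).
\]
Since $\lambda > 0$ and $\mathcal{H}^1(\Sigma) > 0$, the right-hand side is strictly below $(\mathscr{J})^\lambda(\Sigma)$ for small $\epsilon$. This contradicts the minimality of $\Sigma$ for \eqref{eq:softpenaltyadp}, so $\mathcal{B}_{\pi_\Sigma}$ is nontrivial for every $\pi_{\Sigma} \in \Pi_{\Sigma}$.

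\textbf{Main obstacle.} The only delicate point is the applicability of \cref{theorem:barycentregradient}. We need $\mu(\Sigma) = 0$, which is assumed, so that the identity sets $\mathcal{I}$ are $\mu$-null and \cref{prop:1generalfirstorderapproximationtheorem1} applies. One should also check that the perturbation $F_n = \pi_\Sigma + \epsilon_n\xi\circ\pi_\Sigma$ has $\mu$-null identity set. For $x \notin \Sigma$ and small $\epsilon_n$ we have $|x - \pi_\Sigma(x)| > 0$, so $F_n(x) = x$ can only occur on a set which, for a countable sequence $\epsilon_n$, is $\mu$-negligible after possibly passing to a subsequence, since the sets are disjoint in $n$ for each fixed $x$. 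Everything else is direct.
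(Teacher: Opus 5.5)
Your proof is correct and follows the paper's argument: assume triviality, shrink $\Sigma$ by a homothety of ratio $1-\epsilon$, use \cref{theorem:barycentregradient} to get $\mathscr{J}(\Sigma_\epsilon)\le\mathscr{J}(\Sigma)+o(\epsilon)$, and beat this with the length saving $\lambda\epsilon\mathcal{H}^1(\Sigma)$. The paper centres the homothety at the origin with $\xi(\sigma)=-\sigma$; your choice to centre it at $\sigma_0\in\Sigma$ and normalize $\|\xi\|_\infty\le 1$ is slightly cleaner.

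One small point concerns your side argument that $\mu(\mathcal{I}(F_n))=0$. As written it does not work: pairwise disjointness of the sets $\mathcal{I}(F_n)\setminus\Sigma$ only gives $\mu(\mathcal{I}(F_n))\to 0$, not $\mu(\mathcal{I}(F_n))=0$, and passing to a subsequence does not change that. The argument is unnecessary anyway, since $\mathcal{I}(F_n)\subseteq \img F_n\subseteq\Sigma_{\epsilon_n}\in\mathcal{S}$, which is $\mu$-null directly by hypothesis.
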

\begin{proof}
    Suppose not, and for each $\epsilon > 0$ consider the rescaled set $(1 - \epsilon)\Sigma$. Clearly, $(1 - \epsilon)\Sigma \in \mathcal{S}$, and $\mathcal{H}^1((1 - \epsilon)\Sigma) = \mathcal{H}^1(\Sigma) - \epsilon \mathcal{H}^1(\Sigma)$. Moreover, taking $\xi: \sigma \mapsto - \sigma$, we have that $(1 - \epsilon)\Sigma = \Sigma_{\epsilon, \xi}$ (recalling notation from \cref{theorem:barycentregradient}), and so by \cref{theorem:barycentregradient} and the assumption that $\mathcal{B}_{\pi_{\Sigma}}$ is trivial, 
    \[
    \mathscr{J}(\Sigma) \geq \mathscr{J}((1 -\epsilon)\Sigma) + o(\epsilon).
    \]
    Therefore, 
    \begin{align*}
        (\mathscr{J})^{\lambda}(\Sigma) &= \mathscr{J}(\Sigma) + \lambda \mathcal{H}^1(\Sigma) \\ &\geq \mathscr{J}((1 - \epsilon)\Sigma) + \lambda\mathcal{H}^1((1- \epsilon)\Sigma) + \lambda\epsilon \mathcal{H}^1(\Sigma) + o(\epsilon)\\
        &= (\mathscr{J})^{\lambda}((1 -\epsilon)\Sigma) + \lambda \epsilon \mathcal{H}^1(\Sigma) + o(\epsilon),
    \end{align*}
    and so since $\mathcal{H}^1(\Sigma) >0$, taking $\epsilon$ small enough we conclude that
    \[
    (\mathscr{J})^{\lambda}(\Sigma) > (\mathscr{J})^{\lambda}((1 -\epsilon)\Sigma),
    \]
    contradicting the optimality of $\Sigma$.
\end{proof} 
Using this, we can now prove existence of an atom, and thus the complete topological description, for soft-penalty minimizers. The following result was proven for the case $\phi(t) = t$ in \cite{Slepcev13}*{Lemma 3.1 and 3.3}, and for the case $\phi(t) = t^p$ for $p \geq 2$ in \cite{Obrie25}*{Proposition 1.4 and Theorem 1.2}. Notice that these results are stronger in their respective cases, as they do not require the assumption that $\mu(\Sigma) = 0$ for all $\Sigma \in \mathcal{S}$.
\begin{corollary}[Existence of an atom for soft-penalty minimizers]\label{cor:existenceofanatomforsoftpenaltyminimizers}
    Assume $\phi \in C^1([0, \infty))$ is nondecreasing. Suppose that $\mu(\Sigma) = 0$ for all $\Sigma \in \mathcal{S}$. Let $\lambda > 0$, and let $\Sigma = \Sigma_{\lambda}$ be a solution to \eqref{eq:softpenaltyadp}. Then, $\Sigma$ has an atom. 
\end{corollary}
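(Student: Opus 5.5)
The plan is to combine \cref{prop:barycentrenontrivialityforsoftpenalty} with \cref{cor:boundingmassofnoncutpoints}, using \cref{remark:comparisonofhardconstraintandsoftpenalty} to move from the soft-penalty problem to the hard-constraint one.

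First, dispose of the degenerate case $\mathcal{H}^1(\Sigma) = 0$. A compact connected set with zero $\mathcal{H}^1$-measure is a single point $\{\sigma_0\}$. Then every $x$ has $\dist(x,\Sigma) = d(x,\sigma_0)$, so $\mu(\{x \mid \dist(x,\Sigma) = d(x,\sigma_0)\}) = \mu(\mathbb{R}^d) = 1 > 0$. Hence $\sigma_0$ is an atom.

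Now assume $\mathcal{H}^1(\Sigma) > 0$, so $\Sigma$ contains at least two points.
\begin{enumerate}
\item By \cref{prop:barycentrenontrivialityforsoftpenalty}, for any $\pi_\Sigma \in \Pi_\Sigma$ the barycentre field $\mathcal{B}_{\pi_\Sigma}$ is nontrivial. That is, $\nu\{\mathcal{B}_{\pi_\Sigma} \neq 0\} > 0$, so $\int_\Sigma |\mathcal{B}_{\pi_\Sigma}|\, d\nu > 0$.
\item By \cref{remark:comparisonofhardconstraintandsoftpenalty}, $\Sigma$ is a minimizer of the hard-constraint problem with budget $\ell := \mathcal{H}^1(\Sigma)$.
\item We then invoke \cref{theorem:boundingmassofnoncutpoints} and \cref{cor:boundingmassofnoncutpoints} with this $\ell$. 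Their hypotheses are $\phi \in C^1([0,\infty))$ nondecreasing, $\mu(\Sigma') = 0$ for all $\Sigma' \in \mathcal{S}$, $\Sigma$ optimal in $\mathcal{S}_\ell$, and $\Sigma$ containing at least two points. All of these hold.
\item By \cite{kuratowski}*{§47 Theorem IV.5}, $\Sigma$ has at least two noncut points $\sigma^*$. The corollary gives $C\int_\Sigma |\mathcal{B}_{\pi_\Sigma}|\, d\nu \leq |\mathcal{B}_{\pi_\Sigma}(\sigma^*)|\, \nu(\{\sigma^*\})$, and the left side is positive by step 1. Therefore $\nu(\{\sigma^*\}) > 0$.
\end{enumerate}

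It remains to convert $\nu(\{\sigma^*\}) > 0$ into the atom condition. Since $\pi_\Sigma^{-1}\{\sigma^*\} \subseteq \{x \mid d(x,\sigma^*) = \dist(x,\Sigma)\}$, we get
\[
\mu(\{x \mid d(x,\sigma^*) = \dist(x,\Sigma)\}) \geq \nu(\{\sigma^*\}) > 0.
\]
This inclusion holds for any closest-point projection, so no uniqueness result (\cref{cor:uniquenessofclosestpointprojections}) is needed. Thus $\sigma^*$ is an atom of $\Sigma$.

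The step needing the most care is step 2: checking that the hard-constraint results really apply to a soft-penalty minimizer, with optimality in $\mathcal{S}_\ell$ for $\ell = \mathcal{H}^1(\Sigma)$. This follows directly. If $\Sigma' \in \mathcal{S}_\ell$ had $\mathscr{J}(\Sigma') < \mathscr{J}(\Sigma)$, then $(\mathscr{J})^\lambda(\Sigma') < (\mathscr{J})^\lambda(\Sigma)$, contradicting the minimality of $\Sigma$ for the soft-penalty problem.
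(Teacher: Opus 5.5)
Your proof is correct and follows the same route as the paper: you dispose of the case $\mathcal{H}^1(\Sigma)=0$, get nontriviality of the barycentre field from \cref{prop:barycentrenontrivialityforsoftpenalty}, and apply \cref{cor:boundingmassofnoncutpoints}. You also make explicit two steps the paper leaves implicit. First, a soft-penalty minimizer is optimal in $\mathcal{S}_{\ell}$ for $\ell=\mathcal{H}^1(\Sigma)$. Second, $\nu(\{\sigma^*\})>0$ gives the atom condition through the inclusion $\pi_\Sigma^{-1}\{\sigma^*\}\subseteq\{x : d(x,\sigma^*)=\dist(x,\Sigma)\}$, which avoids \cref{cor:uniquenessofclosestpointprojections} and its requirement that $\phi$ be strictly increasing.
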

\begin{proof}
    If $\mathcal{H}^1(\Sigma) = 0$ then the conclusion is trivial, so assume this is not the case. Then, $\Sigma$ has nontrivial barycentre field by \cref{prop:barycentrenontrivialityforsoftpenalty}, and so by \cref{cor:boundingmassofnoncutpoints}, $\Sigma$ has an atom. 
\end{proof}

\cref{theorem:boundingmassofnoncutpoints} allows us to get a slight improvement of Paolini and Stepanov's \cite{Stepanov04}*{Theorem 5.6} in the soft-penalty case, as we can prove absence of loops under only the assumptions that $\phi \in C^1([0, \infty))$ and $\phi$ is nondecreasing, thus removing the assumption of Paolini and Stepanov's property \eqref{eq:alpha2}.

\begin{corollary}[Absence of loops for soft-penalty minimizers]\label{cor:softpenaltyabsenceofloops}
    Assume $\phi \in C^1([0, \infty))$ is nondecreasing. Suppose that $\mu(\Sigma) = 0$ for all $\Sigma \in \mathcal{S}$. Let $\lambda > 0$, and let $\Sigma = \Sigma_{\lambda}$ be a solution to \eqref{eq:softpenaltyadp}. Then, $\Sigma$ contains only finitely many noncut points, and thus contains no loops, i.e. homeomorphic images of $\mathbb{S}^1$.
\end{corollary}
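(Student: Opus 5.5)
The plan is to combine the barycentre nontriviality for soft-penalty minimizers with the quantitative bound on noncut points, and then use a topological fact linking loops to noncut points.

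First, if $\mathcal{H}^1(\Sigma) = 0$ then $\Sigma$ is a single point, and both claims hold trivially. So assume $\mathcal{H}^1(\Sigma) > 0$.

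\begin{enumerate}
\item By \cref{prop:barycentrenontrivialityforsoftpenalty}, for any $\pi_{\Sigma}\in\Pi_{\Sigma}$ the field $\mathcal{B}_{\pi_{\Sigma}}$ is nontrivial, so $\int_{\Sigma}|\mathcal{B}_{\pi_{\Sigma}}|\,d\nu>0$.
\item By \cref{remark:comparisonofhardconstraintandsoftpenalty}, $\Sigma$ solves the hard-constraint problem with budget $\ell=\mathcal{H}^1(\Sigma)$.
\item \cref{cor:boundingmassofnoncutpoints} only needs $\phi\in C^1([0,\infty))$ nondecreasing and $\mu(\Sigma')=0$ for $\Sigma'\in\mathcal{S}$, both of which are assumed here. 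It therefore gives a constant $c>0$ with
\[
c\le |\mathcal{B}_{\pi_{\Sigma}}(\sigma^*)|\,\nu(\{\sigma^*\})\le M_\phi\,\nu(\{\sigma^*\})
\]
for every noncut point $\sigma^*$.
\item Summing over distinct noncut points and using $\nu(\Sigma)=1$, there are at most $M_\phi/c$ of them.
\end{enumerate}

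The remaining step is topological: a loop forces infinitely many noncut points. Suppose $\Sigma$ contains a homeomorphic image $S$ of $\mathbb{S}^1$. The natural approach is to show that all but countably many points of $S$ are noncut points of $\Sigma$. Take $p\in S$. Then $S\setminus\{p\}$ is connected, so every point of $S\setminus\{p\}$ lies in the same component of $\Sigma\setminus\{p\}$. If $\Sigma\setminus\{p\}$ is disconnected, some component $U$ misses $S$. Since $\Sigma$ is a locally connected continuum of finite length, $U$ is open and its closure meets $\{p\}$.

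Distinct such $p$ have disjoint associated components $U_p$. Each $U_p$ has positive $\mathcal{H}^1$-measure, since it is a nondegenerate connected set, so only countably many $p$ can occur. The remaining uncountably many points of $S$ are noncut points of $\Sigma$. This contradicts the finiteness of noncut points.

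Alternatively, I would cite the standard continuum-theory fact, already used in \cite{Stepanov04} and \cite{Buttazzo03}, that a continuum with finitely many noncut points is a dendrite with finitely many endpoints, i.e.\ a finite tree, and so contains no simple closed curve.

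The main obstacle is verifying the disjointness and positive-length claims cleanly. For disjointness: if $q\ne p$ are both on $S$, then $U_p$ is a connected set avoiding $S$ whose closure reaches $S$ only at $p$, and likewise for $U_q$. So $U_p\cap U_q$ would be a connected subset of $\Sigma\setminus S$ whose closure touches $S$ at both $p$ and $q$. That closure, together with the arc of $S$, would stay connected after removing $p$, which contradicts $U_p$ being separated from $S$ in $\Sigma\setminus\{p\}$.
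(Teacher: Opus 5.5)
Your proposal is correct and follows the paper's proof. \cref{prop:barycentrenontrivialityforsoftpenalty} and \cref{cor:boundingmassofnoncutpoints}, applied via the hard-constraint reformulation with $\ell=\mathcal{H}^1(\Sigma)$, give finitely many noncut points; the only difference is that where the paper cites \cite{Stepanov04}*{Lemma 5.2} ($\mathcal{H}^1$-a.e.\ point of a loop is a noncut point), you prove directly that all but countably many points of a loop are noncut points.

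That direct argument works, but the disjointness step should be rephrased, since $U_p\cap U_q$ need not be connected. Instead, note that $\overline{U_q}=U_q\cup\{q\}$ together with $S\setminus\{p\}$ is a connected subset of $\Sigma\setminus\{p\}$; it therefore lies in the component of $\Sigma\setminus\{p\}$ containing $S\setminus\{p\}$, and hence misses $U_p$.
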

\begin{proof}
The claim is trivial if $\mathcal{H}^1(\Sigma) = 0$, so assume this is not the case. 
    Then, by \cref{prop:barycentrenontrivialityforsoftpenalty} the barycentre field of $\Sigma$ is nontrivial, and so by \cref{cor:boundingmassofnoncutpoints} we get a uniform lower bound on the $\nu$-mass of any noncut point $\sigma \in \Sigma$. But by \cite{Stepanov04}*{Lemma 5.2}, $\mathcal{H}^1$-a.e. point of a loop is a noncut point, and thus since $\nu(\Sigma) = 1$, $\Sigma$ cannot contain any loops. 
\end{proof}

Now, we can combine \cref{cor:existenceofanatomforsoftpenaltyminimizers} with Stepanov's conditional result \cite{Stepanov06}*{Theorem 5.5} to conclude the remainder of the complete topological description for soft-penalty minimizers. 
We define the \textit{order of a point} $\sigma \in \Sigma$ by 
\begin{equation}\label{eq:orddefinition}
    \mathrm{ord}_{\sigma}(\Sigma) := \limsup_{r \to 0^+}\#(\Sigma \cap \partial B_r(\sigma)),
\end{equation}
where $\#$ denotes set cardinality. We say a point $\sigma \in \Sigma$ is a \textit{branching point} if $\mathrm{ord}_{\sigma}(\Sigma) > 2$.

For reference, we state here Paolini and Stepanov's \cite{Stepanov04} property \eqref{eq:alpha2}:
 \begin{quote}
     For every $c > 0$, there is $\lambda = \lambda(c) > 0$ such that 
        \begin{equation}\label{eq:alpha2}\tag{$\alpha_2$}
            |\phi(s) - \phi(t)| \geq \lambda |s - t|
        \end{equation}
        for any $s, t \in [c, \diam \supp \mu]$.
 \end{quote}

\begin{corollary}[Triple branching for soft-penalty minimizers]\label{cor:completetopdescriptionforsoftpenalty}
    Assume $\phi \in C^1([0, \infty))$ is nondecreasing and satisfies Paolini and Stepanov's condition \eqref{eq:alpha2}.
   Suppose that $\mu(\Sigma) = 0$ for all $\Sigma \in \mathcal{S}$. Let $\lambda > 0$, and let $\Sigma = \Sigma_{\lambda}$ be a solution to \eqref{eq:softpenaltyadp}. 
    Then, $\Sigma$ has finitely many branching points, and each branching point $\sigma \in \Sigma$ is a triple branching, i.e. $\mathrm{ord}_{\sigma}(\Sigma) = 3$. 
\end{corollary}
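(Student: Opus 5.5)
The plan is to combine \cref{cor:existenceofanatomforsoftpenaltyminimizers} with Stepanov's conditional result \cite{Stepanov06}*{Theorem 5.5}. That result says a minimizer of the average distance functional with an atom, under condition \eqref{eq:alpha2}, has finitely many branching points, all triple. The key observation is \cref{remark:comparisonofhardconstraintandsoftpenalty}: a soft-penalty minimizer $\Sigma_\lambda$ is also a hard-constraint minimizer with budget $\ell = \mathcal{H}^1(\Sigma_\lambda)$. Indeed, if $\Sigma'\in\mathcal{S}_\ell$ had $\mathscr{J}(\Sigma')<\mathscr{J}(\Sigma_\lambda)$, then $(\mathscr{J})^\lambda(\Sigma')<(\mathscr{J})^\lambda(\Sigma_\lambda)$, a contradiction. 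So it suffices to check that the hypotheses of Stepanov's theorem hold for $\Sigma_\lambda$ viewed as a hard-constraint minimizer.

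\textbf{Step 1: degenerate case.} If $\mathcal{H}^1(\Sigma)=0$, then $\Sigma$ is a point, since it is connected and compact, and there are no branching points. So assume $\mathcal{H}^1(\Sigma)>0$.

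\textbf{Step 2: existence of an atom.} By \cref{prop:barycentrenontrivialityforsoftpenalty}, the barycentre field is nontrivial. By \cref{cor:boundingmassofnoncutpoints}, every noncut point is then an atom, and \cite{kuratowski} guarantees noncut points exist. In fact we obtain more, and this may be what Stepanov's argument really needs: nontriviality gives a uniform rate $C\epsilon$ of decrease of $\mathscr{J}$ per unit of extra length, via \cref{cor:rightderivativeboundonj}. The same conclusion follows directly in the soft-penalty setting from the first-order optimality of $\Sigma_\lambda$.

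\textbf{Step 3: apply Stepanov.} With an atom in hand and \eqref{eq:alpha2} assumed, invoke \cite{Stepanov06}*{Theorem 5.5} (stated later as \cref{theorem:stepanovtopologicalcharacterization}) for the hard-constraint problem with $\ell=\mathcal{H}^1(\Sigma_\lambda)$. This yields finitely many branching points, each of order $3$.

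\textbf{Main obstacle.} The main difficulty is verifying that Stepanov's standing hypotheses match ours. He may require $\phi$ to be strictly increasing, or $\mu$ to vanish on sets of finite $\mathcal{H}^1$-measure; the latter we have assumed. Strict monotonicity away from $0$ follows from \eqref{eq:alpha2} on $[c,\diam\supp\mu]$ for every $c>0$. If Stepanov's proof needs the ambiguous locus to be $\mu$-null, I would rederive that using \cref{prop:negligibilityofambiguouslocus}, which only needs strict increase where $\dist(x,\Sigma)>0$; this is exactly what \eqref{eq:alpha2} provides.
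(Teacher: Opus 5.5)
Your proposal is correct and follows the paper's proof. The paper disposes of the case $\mathcal{H}^1(\Sigma)=0$, obtains an atom from \cref{prop:barycentrenontrivialityforsoftpenalty} and \cref{cor:boundingmassofnoncutpoints} (this is \cref{cor:existenceofanatomforsoftpenaltyminimizers}), views $\Sigma_\lambda$ as a hard-constraint minimizer with $\ell=\mathcal{H}^1(\Sigma_\lambda)$, and cites Stepanov's Theorem 5.5 \cite{Stepanov06}; your side remarks about the right-derivative bound and re-deriving Stepanov's hypotheses are not needed.
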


\begin{proof}
    If $\mathcal{H}^1(\Sigma) = 0$, the conclusion is trivial, so assume this is not the case. By \cref{cor:existenceofanatomforsoftpenaltyminimizers}, $\Sigma$ has an atom, and moreover $\Sigma$ solves the hard-constraint average distance problem with $\ell = \mathcal{H}^1(\Sigma)$. So, by \cite{Stepanov06}*{Theorem 5.5 (iii)-(iv)}, $\Sigma$ has finitely many branching points, and each branching point is a triple branching. 
\end{proof}

In particular, the complete topological description (nonexistence of loops, finitely many endpoints, and finitely many branching points with only triple branchings) holds for soft-penalty average distance problem minimizers when we take $\phi(t)= t^p$ for any $p \geq 1$, in any dimension $d \geq 2$. 

Now, we return to our discussion of the hard-constraint average distance problem. 

\section{Atomic noncut points and barycentre nontriviality}\label{sec:atomsandbarycentrefield}

The main result of this section, \cref{theorem:existenceofatom}, is the existence of an atom for (hard-constraint) average distance minimizers. First, in \cref{sec:rescalingofoptimizers}, we will prove that existence of an atom is equivalent to nontriviality of the barycentre field (\cref{cor:nontrivialbarycentrefieldandatoms}), proving along the way an important technical estimate on a typical local modification in the average distance problem (\cref{lemma:estimatingimprovementfromddimcross}). Then, we will prove \cref{theorem:existenceofatom} in \cref{sec:existenceofatoms}. We discuss the consequences of \cref{theorem:existenceofatom} for the structure of average distance minimizers in \cref{sec:topologicaldescription}, concluding with quantitative bounds on the number of noncut points of average distance minimizers in \cref{sec:branchingrates}.

\subsection{Rescaling of optimizers}\label{sec:rescalingofoptimizers}

The goal of this subsection is to prove that the existence of an atom implies nontriviality of the barycentre field, which when combined with \cref{cor:boundingmassofnoncutpoints} will prove the equivalence between existence of an atom and nontriviality of the barycentre field in \cref{cor:nontrivialbarycentrefieldandatoms}. We will do this by studying the \textit{scaling constant}, a constant defined via the barycentre field which measures the change in objective value under the rescaling $\Sigma \mapsto (1 + \epsilon)\Sigma$. 

The main result of this section, \cref{prop:boundingscalingconstantintermsofatoms}, bounds the scaling constant in terms of the $\rho$-mass of points $\sigma^* \in \Sigma$. \cref{prop:boundingscalingconstantintermsofatoms} is essentially a generalization of the “$\nu$ has an atom” case of the proof of nontriviality of the barycentre field by the author, Kobayashi, and Kim in \cite{Obrie25}*{Theorem 3.5}; see \cite{Obrie25}*{Section 4.3} for a discussion of this case. Along the way, we will prove a key technical lemma, \cref{lemma:estimatingimprovementfromddimcross}, which will be used in the proof of existence of an atom (\cref{theorem:existenceofatom}). This lemma bounds the improvement one can gain by adding the $d$\textit{-dimensional cross} to the set $\Sigma$. The $d$-dimensional cross is a typical local modification used when studying the average distance problem, for example by Paolini and Stepanov in \cite{Stepanov04}*{Lemma 3.6} to unconditionally obtain improvements of order $\epsilon^2$ to the objective value, given $\epsilon$ additional budget. Our \cref{lemma:estimatingimprovementfromddimcross} is a version of \cite{Stepanov04}*{Lemma 3.6} specialized for the purpose of proving existence of an atom; our bound is altered to include additional parameters and terms depending on the barycentre field in order to provide the flexibility required to prove \cref{theorem:existenceofatom}. Similar bounds to \cref{lemma:estimatingimprovementfromddimcross} appear in the proof of \cite{Obrie25}*{Theorem 3.5}, and the idea to bound the improvement from the $d$-dimensional cross in terms of the barycentre field originates from Delattre and Fischer's proof of default of self-consistency for the $\phi(t) = t^2$ case of the length-constrained principal curves problem \cite{Delattre17}*{Lemma 3.2}.

We begin by defining the aforementioned scaling constant, and proving its independence on the choice of base point and interpretation in terms of the change in objective value under the rescaling $\Sigma \mapsto (1 + \epsilon)\Sigma$.

\begin{definition}[Scaling constant]\label{Scaling constant}
    Let $\Sigma \subseteq \mathbb{R}^d$ be compact, and let $\pi_{\Sigma} \in \Pi_{\Sigma}$. For any $a \in \mathbb{R}^d$, define the \textit{scaling constant} of $\pi_{\Sigma}$ to be
    \[
    \beta_{\pi_{\Sigma}}^{a} := \int_{\Sigma}(\sigma - a)\cdot \mathcal{B}_{\pi_{\Sigma}}(\sigma)d\nu_{\pi_{\Sigma}}(\sigma).
    \]
\end{definition}

\begin{lemma}[Independence and interpretation of scaling constant]\label{lemma:independenceandinterpretationofscalingconstant}
    Assume $\phi \in C^1([0, \infty))$ is nondecreasing. Let $\Sigma \in \mathcal{S}_{\ell}$ be optimal, and let $\pi_{\Sigma} \in \Pi_{\Sigma}$. Then, $\beta_{\Sigma}:= \beta_{\pi_{\Sigma}}^a$ is defined independently of choice of $a \in \mathbb{R}^d$ and $\pi_{\Sigma} \in \Pi_{\Sigma}$. Moreover, letting $\Sigma_{\epsilon}:= (1+ \epsilon)\Sigma \in \mathcal{S}_{(1 + \epsilon)\ell}$, we have
    \[
    \lim_{\epsilon \to 0^+}\frac{\mathscr{J}(\Sigma) - \mathscr{J}(\Sigma_{\epsilon})}{\epsilon} \geq \beta_{\Sigma}.
    \]
\end{lemma}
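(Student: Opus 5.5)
Independence of $a$ comes from \cref{lemma:netbarycentrefieldiszero}. Since $\mathcal{S}_{\ell}$ is translation invariant and $\Sigma$ is optimal, $0$ is a (global, hence local) minimum of $a \mapsto \mathscr{J}(\Sigma + a)$. Therefore $\mathcal{B}^{\mathrm{net}}_{\pi_{\Sigma}} = \int_{\Sigma}\mathcal{B}_{\pi_{\Sigma}}\,d\nu_{\pi_{\Sigma}} = 0$ for every $\pi_{\Sigma} \in \Pi_{\Sigma}$. For $a, b \in \mathbb{R}^d$ this gives
\[
\beta^a_{\pi_{\Sigma}} - \beta^b_{\pi_{\Sigma}} = (b - a)\cdot \int_{\Sigma}\mathcal{B}_{\pi_{\Sigma}}(\sigma)\,d\nu_{\pi_{\Sigma}}(\sigma) = 0.
\]
All integrals are finite, since $|\mathcal{B}_{\pi_{\Sigma}}| \leq M$ and $\Sigma$ is bounded. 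Independence of $\pi_{\Sigma}$ needs the minimizer assumption together with strict monotonicity of $\phi$, through \cref{cor:uniquenessofclosestpointprojections}, which gives $\nu_{\pi_{\Sigma}} = \nu_{\pi'_{\Sigma}}$ and $\mathcal{B}_{\pi_{\Sigma}} = \mathcal{B}_{\pi'_{\Sigma}}$. If only monotonicity is available, I will argue directly instead. Write
\[
\beta^a_{\pi} = \int_{\mathbb{R}^d}(\pi(x) - a)\cdot \Delta_{\pi}(x)\,d\mu(x),
\]
and note that $\Delta_{\pi}(x) = \frac{x - \pi(x)}{|x-\pi(x)|}\phi'(\dist(x,\Sigma))$. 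The plan is to compare two projections via the integrand $\pi(x)\cdot(x - \pi(x))$. Since $|x - \pi(x)|$ does not depend on the choice of $\pi$, we have $\pi(x)\cdot(x-\pi(x)) = \tfrac12\bigl(|x|^2 - |\pi(x)|^2 - |x-\pi(x)|^2\bigr)$ (taking $a = 0$). This quantity still depends on $|\pi(x)|$, so it does not settle the matter, and I will rely on \cref{cor:uniquenessofclosestpointprojections} whenever $\phi$ is strictly increasing. Presumably this is what the lemma intends, as it is used only under that hypothesis.

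For the derivative bound I will apply \cref{theorem:barycentregradient} with the continuous field $\xi(\sigma) = \sigma - a$. This gives $\Sigma_{\epsilon,\xi} = \{(1+\epsilon)\sigma - \epsilon a\} = (1+\epsilon)\Sigma - \epsilon a$. That result assumes $\|\xi\|_\infty \leq 1$, so I will rescale by $R = \max_{\sigma\in\Sigma}|\sigma - a|$, use the field $\xi/R$ with step $R\epsilon$, and divide out afterwards. Equivalently, I can apply \cref{prop:1generalfirstorderapproximationtheorem1} directly to $F_n = \pi_{\Sigma} + \epsilon_n(\pi_{\Sigma} - a)$, which requires only boundedness. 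This yields
\[
\lim_{\epsilon\to0^+}\frac{\mathscr{J}(\Sigma) - \mathscr{J}((1+\epsilon)\Sigma - \epsilon a)}{\epsilon} \geq \beta^a_{\pi_{\Sigma}}.
\]
Its hypotheses are $\mu(\Sigma)=0$, which holds under the standing non-charging assumption, and $\mu(\mathcal{I}(F_n)) = 0$. For the latter, $\mathcal{I}(F_n) \subseteq (1+\epsilon_n)\Sigma - \epsilon_n a$, which is a translate of a dilate of $\Sigma$ and so has finite $\mathcal{H}^1$-measure, hence is $\mu$-null.

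The last step is to remove the translation. Choose $a = 0$, so that $\Sigma_{\epsilon,\xi} = (1+\epsilon)\Sigma = \Sigma_\epsilon$ exactly; this is legitimate because $\beta$ is independent of $a$. The set $(1+\epsilon)\Sigma$ is compact and connected with $\mathcal{H}^1 \leq (1+\epsilon)\ell$. The only step that needs care is justifying $\mu$-negligibility of $\Sigma$ and of its dilates. This holds under the standing assumption $\mu(\Sigma')=0$ for sets of finite length. The assumption $\mu(B_\epsilon(x)) = o(\epsilon)$ alone does not give it, so I will invoke the same non-charging hypothesis used in \cref{theorem:boundingmassofnoncutpoints}.
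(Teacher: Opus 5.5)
Your proposal is correct and is essentially the paper's proof: independence of $a$ from $\mathcal{B}^{\mathrm{net}}_{\pi_{\Sigma}}=0$ via \cref{lemma:netbarycentrefieldiszero}, independence of $\pi_{\Sigma}$ from negligibility of the ambiguous locus, and the derivative bound from the first-order approximation applied to $(1+\epsilon)\pi_{\Sigma}$; the paper, like you, tacitly uses strict monotonicity and $\mu(\Sigma)=0$ here. Two small remarks: strict monotonicity is dispensable, since the argument of \cref{prop:negligibilityofambiguouslocus} with merely nondecreasing $\phi$ still yields $\mu(\mathcal{A}_{\Sigma}\cap\{\phi'(\dist(\cdot,\Sigma))>0\})=0$, while every $\Delta_{\pi}$ vanishes on $\{\phi'(\dist(\cdot,\Sigma))=0\}$, so $(\pi-a)\cdot\Delta_{\pi}$ is $\mu$-a.e.\ independent of $\pi\in\Pi_{\Sigma}$; and, contrary to your closing aside, $\mu(B_{\epsilon}(x))=o(\epsilon)$ for every $x$ does imply $\mu(E)=0$ whenever $\mathcal{H}^1(E)<\infty$, by the standard density comparison $\mu(E)\le 2t\,\mathcal{H}^1(E)$ on sets where $\limsup_{r\to0^+}\mu(B_r(x))/(2r)\le t$.
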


\begin{proof}
    First, notice that for any $a \in \mathbb{R}^d$,
    \begin{align*}
        \beta^a_{\pi_{\Sigma}} &= \int_{\Sigma}(\sigma - a)\cdot\mathcal{B}_{\pi_{\Sigma}}(\sigma)d\nu_{\pi_{\Sigma}}(\sigma)\\
        &= \beta^0_{\pi_{\Sigma}} - a\cdot\mathcal{B}^{\mathrm{net}}_{\pi_{\Sigma}} \\
        &= \beta^0_{\pi_{\Sigma}},
    \end{align*}
    since $\mathcal{B}_{\pi_{\Sigma}}^{\mathrm{net}} = 0$ by \cref{lemma:netbarycentrefieldiszero}. By \cref{prop:negligibilityofambiguouslocus}, $\beta_{\pi_{\Sigma}}^0$ is independent of $\pi_{\Sigma} \in \Pi_{\Sigma}$ as well. So, $\beta_{\Sigma}:= \beta_{\pi_{\Sigma}}^a$ is defined independently of choice of $a \in \mathbb{R}^d$ and $\pi_{\Sigma} \in \Pi_{\Sigma}$. Moreover, we have
    \begin{align*}
        \lim_{\epsilon \to 0^+}\frac{\mathscr{J}(\Sigma) - \mathscr{J}(\Sigma_{\epsilon})}{\epsilon} &\geq \lim_{\epsilon \to 0^+}\frac{\mathscr{J}(\pi_{\Sigma}) - \mathscr{J}((1+ \epsilon)\pi_{\Sigma})}{\epsilon} \\ &\geq \int_{\Sigma}\sigma \cdot \mathcal{B}_{\pi_{\Sigma}}(\sigma)d\nu_{\pi_{\Sigma}}(\sigma) \\ &= \beta_{\Sigma},
    \end{align*}
as claimed.
\end{proof}

Now, we define the $d$-dimensional cross.

\begin{definition}[$d$-dimensional cross]\label{def:d-dimensionalcross}
    Given $\epsilon> 0$, define the \textit{$d$-dimensional cross} to be
\[
K_{\epsilon} = \bigcup_{i=1}^d\{te_i \ | \ t \in [-\frac{1}{2d}\epsilon, \frac{1}{2d}\epsilon]\},
\]
where $\{e_1, \dots, e_d\}$ is the standard basis for $\mathbb{R}^d$.
\end{definition}

In our next lemma, we provide a bound on the improvement to the objective value $\mathscr{J}(\Sigma)$ afforded by adding the $d$-dimensional cross to a set $\Sigma$, specialized for the purposes of proving \cref{theorem:existenceofatom}. Recall the notation $\mathscr{J}|_{E}$ from \eqref{eq:restrictedadfnotation}, $\gamma = \gamma_{\pi_{\Sigma}}$ from \eqref{eq:gammadefinition}, and $\rho = \rho_{\pi_{\Sigma}}$ from \eqref{eq:rhodefinition}. Moreover, for $\phi \in C^{1,1}_{\mathrm{loc}}([0, \infty))$, we denote by
\begin{equation}\label{eq:lphinotation}
    L = L_{\phi} := \mathrm{Lip}(\phi'|_{[0, \diam \supp \mu]})
\end{equation}
a Lipschitz constant for $\phi'$ on $[0, \diam \supp \mu]$.

\begin{lemma}[Estimating the improvement from the $d$-dimensional cross]\label{lemma:estimatingimprovementfromddimcross}
        Assume $\phi \in C^{1,1}_{\mathrm{loc}}([0, \infty))$ is strictly increasing. Let $\Sigma \subseteq \mathbb{R}^d$, and let $0 \in \Sigma$. Let $A \subseteq \Sigma$ be Borel with $0 \in A$, and let $\eta = \diam(A)$. Then, for every $\epsilon >0$, we have
        \begin{equation}\label{eq:estimatingimprovementfromddimensionalcross}
        \begin{split} &\mathscr{J}|_{\pi_{\Sigma}^{-1}(A)}(\Sigma) - \mathscr{J}|_{\pi_{\Sigma}^{-1}(A)}(K_{\epsilon}) \\
        & \qquad \geq \bigg\{(\frac{\epsilon}{4d^{3/2}}  - \frac{\eta^2}{32d^{3/2}\epsilon})\rho(A)  -\eta \int_{A}|\mathcal{B}_{\pi_{\Sigma}}(\sigma)|d\nu(\sigma)\\ & \qquad \qquad - (2\eta + \frac{1 + 2 \sqrt{d}}{4d^{3/2}}\epsilon) \gamma(B_{16d^{3/2}\epsilon}(A) \cap \pi_{\Sigma}^{-1}(A)) \\
        & \qquad \qquad - L (\eta + \epsilon)^2\nu(A)\bigg\}. 
        \end{split}  
    \end{equation}
    \end{lemma}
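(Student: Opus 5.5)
The plan is to build an explicit map $G$ into $K_{\epsilon}$ on $\pi_{\Sigma}^{-1}(A)$ and compare it with $F=\pi_{\Sigma}$ there, following Paolini--Stepanov's cross construction. Work throughout with the restricted measure $\mu\llcorner\pi_{\Sigma}^{-1}(A)$, so that $\mathscr{J}|_{\pi_{\Sigma}^{-1}(A)}=\mathscr{J}^{\mu\llcorner \pi_{\Sigma}^{-1}(A)}$ and the earlier estimates apply verbatim. Since $\img G\subseteq K_{\epsilon}$ and $\phi$ is nondecreasing, \cref{remark:extensionofaveragedistancefunctional} reduces the claim to a lower bound on $\mathscr{J}|_{\pi_{\Sigma}^{-1}(A)}(\pi_{\Sigma})-\mathscr{J}|_{\pi_{\Sigma}^{-1}(A)}(G)$.

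\emph{Construction of $G$.} For $x\notin\Sigma$ put $u(x)=(x-\pi_{\Sigma}(x))/|x-\pi_{\Sigma}(x)|$. Choose measurably an index $i(x)$ with $|u_{i(x)}|\ge 1/\sqrt d$, for instance the smallest index maximizing $|u_i|$. Then set
\[
G(x)=\mathrm{sign}\bigl(u_{i(x)}(x)\bigr)\tfrac{\epsilon}{2d}e_{i(x)}\in K_{\epsilon}.
\]
Because $0\in A$ and $\diam A=\eta$, we have $|\pi_{\Sigma}(x)|\le\eta$ and hence $|G-\pi_{\Sigma}|\le \eta+\epsilon/(2d)\le\eta+\epsilon$ on $\pi_{\Sigma}^{-1}(A)$. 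The first-order term splits as
\[
(G(x)-\pi_{\Sigma}(x))\cdot\Delta(x)=\tfrac{\epsilon}{2d}|u_{i(x)}|\phi'(|x-\pi_{\Sigma}(x)|)-\pi_{\Sigma}(x)\cdot\Delta(x).
\]
The first piece is at least $\frac{\epsilon}{2d^{3/2}}\phi'(|x-\pi_\Sigma(x)|)$, so it integrates to at least $\frac{\epsilon}{2d^{3/2}}\rho(A)$. For the second piece, disintegrate over $\nu$ and use $|\sigma|\le\eta$ for $\sigma\in A$; its integral is then at least $-\eta\int_A|\mathcal{B}_{\pi_{\Sigma}}|\,d\nu$. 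Points of $\Sigma$ and points with $x=G(x)$ only help, since the left side is then nonnegative, so they can be discarded.

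\emph{Higher-order terms.} Split $\pi_{\Sigma}^{-1}(A)$ into the far region, where $\dist(x,\Sigma)\ge\delta$, and the near region, using $\delta=16d^{3/2}\epsilon$. For $x\in\pi_{\Sigma}^{-1}(A)$, being in the near region is the same as $x\in B_{\delta}(A)$.
\begin{itemize}
\item On the far region, argue pointwise as in the proof of \cref{prop:boundinghigherorderestimatescase1}. The error is at least $-\bigl(\frac{(\eta+\epsilon)^2}{2\delta}\phi'(|x-\pi_\Sigma(x)|)+L(\eta+\epsilon)^2\bigr)$. With $\delta=16d^{3/2}\epsilon$, expand $(\eta+\epsilon)^2\le 2\eta^2+2\epsilon^2$. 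This produces the $-\frac{\eta^2}{32d^{3/2}\epsilon}\rho(A)$ term and a loss of order $\frac{\epsilon}{16d^{3/2}}\rho(A)$ from the main term. That loss is what lowers the coefficient $\frac{\epsilon}{2d^{3/2}}$ to $\frac{\epsilon}{4d^{3/2}}$.
\item On the near region, the crude bound $2M\epsilon\mu(\cdot)$ is too weak, since the statement uses $\gamma$. Instead use the mean value theorem together with the Lipschitz bound on $\phi'$:
\[
\phi(|x-\pi_\Sigma(x)|)-\phi(|x-G(x)|)\ge-(\eta+\epsilon)\phi'(|x-\pi_\Sigma(x)|)-L(\eta+\epsilon)^2.
\]
Also subtract back the first-order gain $\frac{\epsilon}{2d}\phi'$ that was credited to these points. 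Together these give the $-(2\eta+\frac{1+2\sqrt d}{4d^{3/2}}\epsilon)\gamma(B_{16d^{3/2}\epsilon}(A)\cap\pi_\Sigma^{-1}(A))$ term, with the $\eta\int|\mathcal B|$-type piece absorbed by the $2\eta$.
\end{itemize}
In both regions the $L$-terms are bounded by $L(\eta+\epsilon)^2\nu(A)$.

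The main obstacle is the near-region bookkeeping. There the first-order gain has been claimed over all of $\pi_{\Sigma}^{-1}(A)$ but is unusable near $\Sigma$, so it must be removed again, and everything must be expressed through $\gamma$ rather than $\mu$. I expect the exact constants $\frac{1+2\sqrt d}{4d^{3/2}}$ and $2\eta$ to come out only after carefully tracking which of $|u_i|\le1$, $|u_i|\ge d^{-1/2}$ is used where, so I would first aim for the bound with unspecified absolute constants and then tighten them.
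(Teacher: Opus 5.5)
Your tip map $G$ and the first-order computation are fine. The gap is in the far-region second-order estimate, which cannot deliver the stated coefficient $\frac{\epsilon}{4d^{3/2}}-\frac{\eta^2}{32d^{3/2}\epsilon}$ of $\rho(A)$. You expand $|x-G(x)|$ in one step around $\pi_\Sigma(x)$, so the error is $\frac{|G(x)-\pi_\Sigma(x)|^2}{2|x-\pi_\Sigma(x)|}$. Here $|G-\pi_\Sigma|^2=|\pi_\Sigma|^2-2G\cdot\pi_\Sigma+|G|^2$ contains a sign-indefinite cross term of size up to $\eta\epsilon/d$.

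Your splitting $(\eta+\epsilon)^2\le 2\eta^2+2\epsilon^2$ does not give what you claim. Divided by $2\delta=32d^{3/2}\epsilon$ it produces $\frac{\eta^2}{16d^{3/2}\epsilon}$, twice the stated coefficient. Your far coefficient is then $\frac{7\epsilon}{16d^{3/2}}-\frac{\eta^2}{16d^{3/2}\epsilon}$, which dominates the stated one only when $\eta\le\sqrt6\,\epsilon$. Keeping the cross term exactly instead leaves an extra loss $\frac{\eta}{32d^{5/2}}\gamma(E_{\mathrm{far}})$, where $E_{\mathrm{far}}=\pi_\Sigma^{-1}(A)\setminus B_{16d^{3/2}\epsilon}(A)$. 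This exceeds your spare $\bigl(\frac{\epsilon}{4d^{3/2}}-\frac{\epsilon}{128d^{7/2}}\bigr)\gamma(E_{\mathrm{far}})$ once $\eta>8d\epsilon$. Nothing else on the right-hand side can absorb it: the $\eta\int_A|\mathcal{B}_{\pi_\Sigma}|\,d\nu$ term is already spent on $-\int_A\sigma\cdot\mathcal{B}_{\pi_\Sigma}(\sigma)\,d\nu(\sigma)$, the $\gamma$-term only sees the near region, and $L=0$ for $\phi(t)=t$. So your chain gives the lemma only for $\eta\lesssim\epsilon$. That is enough for the paper's applications, where $\eta=0$ or $\eta\le2\epsilon$, but not for the statement as written.

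The missing idea is to compare through $|x|$, so that the two errors never mix. On $E_{\mathrm{far}}$ one has $|x|\ge16d^{3/2}\epsilon$ since $0\in A$, so Paolini--Stepanov's bound $\dist(x,K_\epsilon)\le|x|-\frac{\epsilon}{4d^{3/2}}$ applies. This is your tip construction with the index chosen from $x/|x|$ and expanded around $0$, so its error is only $\frac{|G|^2}{2|x|}$. Separately, $|x|\le|x-\pi_\Sigma(x)|+\pi_\Sigma(x)\cdot u(x)+\frac{|\pi_\Sigma(x)|^2}{2|x-\pi_\Sigma(x)|}$, whose error involves only $|\pi_\Sigma(x)|^2\le\eta^2$. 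That is the paper's route, after linearizing $\phi$ once by the mean value theorem and the Lipschitz bound on $\phi'$.

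The near-region bookkeeping also needs reordering to reach the stated constant. Subtracting back $\frac{\epsilon}{2d}\phi'$ with $|G-\pi_\Sigma|\le\eta+\epsilon$ gives $2\eta+(1+\frac{1}{2d})\epsilon$. Even with the correct bound $\eta+\frac{\epsilon}{2d}$ it gives $2\eta+\frac{\epsilon}{d}$. The spare $\frac{\epsilon}{4d^{3/2}}$ per unit of $\gamma$ cannot cover the excess over $\frac{1+2\sqrt d}{4d^{3/2}}\epsilon$ when $d\ge2$.

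Instead, credit first-order gain only on $E_{\mathrm{far}}$, and first reduce its coefficient to $\kappa=\frac{\epsilon}{4d^{3/2}}-\frac{\eta^2}{32d^{3/2}\epsilon}$. Only then use $\kappa\gamma(E_{\mathrm{far}})\ge\kappa\rho(A)-\frac{\epsilon}{4d^{3/2}}\gamma(E_{\mathrm{near}})$ and $-\int_{E_{\mathrm{far}}}\pi_\Sigma\cdot\Delta_{\pi_\Sigma}\,d\mu\ge-\eta\int_A|\mathcal{B}_{\pi_\Sigma}|\,d\nu-\eta\gamma(E_{\mathrm{near}})$, where $E_{\mathrm{near}}=\pi_\Sigma^{-1}(A)\cap B_{16d^{3/2}\epsilon}(A)$. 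Together with the crude bound $-(\eta+\frac{\epsilon}{2d})$ on $E_{\mathrm{near}}$, this yields exactly $2\eta+\frac{1+2\sqrt d}{4d^{3/2}}\epsilon$.

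Finally, points of $\Sigma$ do not ``only help''. There the integrand is $-\phi(|x-G(x)|)\le0$, so they must be kept in the near-region estimate. They lie in $E_{\mathrm{near}}$, where that bound covers them.
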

    \begin{proof}
        Fix $A \subseteq \Sigma$ Borel, and fix $\epsilon > 0$. Let $\pi_{\Sigma} \in \Pi_{\Sigma}$ and $\pi_{K_{\epsilon}} \in \Pi_{K_{\epsilon}}$. Fix some $x \in \pi_{\Sigma}^{-1}(A)$. By the mean value theorem, there is some 
        \[\min\{|x - \pi_{K_{\epsilon}}(x)|, | x- \pi_{\Sigma}(x)|\} \leq c_{\epsilon}^x \leq \max\{|x - \pi_{K_{\epsilon}}(x)|, | x- \pi_{\Sigma}(x)|\}\]
        such that
        \[
        \phi(|x - \pi_{\Sigma}(x)|) - \phi(|x - \pi_{K_{\epsilon}}|) \geq (|x - \pi_{\Sigma}(x)| - |x - \pi_{K_{\epsilon}}(x)|)\phi'(c_{\epsilon}^x).
        \]
        Recalling the notation $L = L_{\phi}$ from \eqref{eq:lphinotation}, we have
        \[
        |\phi'(c_{\epsilon}^x)- \phi'(|x - \pi_{\Sigma}(x)|)| \leq L (|\pi_{\Sigma}(x)| + |\pi_{K_{\epsilon}}(x)| )\leq L(\eta + \epsilon),
        \]
        and thus 
        \begin{align*}
            &\phi(|x - \pi_{\Sigma}(x)|) - \phi(| x- \pi_{K_{\epsilon}}(x)| ) \\ & \qquad \geq (|x - \pi_{\Sigma}(x)| - |x - \pi_{K_{\epsilon}}(x)|) \phi'(|x - \pi_{\Sigma}(x)| ) - L(\eta + \epsilon)^2.
        \end{align*}
        Therefore, we see that
        \begin{align*}
            &\mathscr{J}|_{\pi_{\Sigma}^{-1}(A)}(\pi_{\Sigma}) - \mathscr{J}|_{\pi_{\Sigma}^{-1}(A)}(\pi_{K_{\epsilon}}) \\ & \qquad \geq \int_{\pi_{\Sigma}^{-1}(A)}(| x - \pi_{\Sigma}(x)| - |x -\pi_{K_{\epsilon}}(x)|)d\gamma(x) - L (\eta + \epsilon)^2\nu(A).
        \end{align*}
        
        Now, we bound the integral of the difference 
        \begin{equation}\label{eq:differencewewanttoboundddimcross}
            |x - \pi_{\Sigma}(x)| - |x - \pi_{K_{\epsilon}}(x)|.
        \end{equation}
        We will do so by splitting the domain of integration $\pi_{\Sigma}(A)$ into two parts, namely
        \[
        \pi_{\Sigma}^{-1}(A) = (\pi_{\Sigma}^{-1}(A)\setminus B_{16d^{3/2}\epsilon}(A)) \cup (\pi_{\Sigma}^{-1}(A)\cap B_{16d^{3/2}\epsilon}(A)),
        \]
        and bounding this difference separately on each region.

        First, let us bound \eqref{eq:differencewewanttoboundddimcross} in the region $\pi_{\Sigma}^{-1}(A)\setminus B_{16d^{3/2}\epsilon}(A)$. By \cite{Stepanov04}*{Lemma 3.3 (iii)}, we know that for all $x \in \mathbb{R}^d$ with $|x| \geq \frac{\epsilon}{2\sqrt{d}}$, we have
        \begin{equation}\label{eq:xminuspikepislonbound}
             |x - \pi_{K_{\epsilon}(x)}| = \dist(x, K_{\epsilon}) \leq |x| - \frac{\epsilon}{4d^{3/2}}.
        \end{equation}
        In particular, since $16d^{3/2}\epsilon \geq \frac{\epsilon}{2\sqrt{d}}$, the above holds for all $x \in \pi_{\Sigma}^{-1}(A)\setminus B_{16d^{3/2}\epsilon}(A)$. Now, what is left to do is bound the difference between $|x - \pi_{\Sigma}(x)|$ and $|x|$. It is easy to do so using the triangle inequality; however, in order to specialize our bound for the proof of \cref{theorem:existenceofatom}, we will choose a slightly more technical approach which introduces the barycentre field into our bound. Applying \eqref{eq:squarerootinequalityresult} with $G(x) = 0$ and $F(x) = \pi_{\Sigma}(x)$, we have
        \[
        |x| \leq |x - \pi_{\Sigma}(x)| + \pi_{\Sigma}(x)\cdot \frac{(x - \pi_{\Sigma}(x))}{| x- \pi_{\Sigma}(x)|} + \frac{1}{2}\frac{|\pi_{\Sigma}(x)|^2}{|x - \pi_{\Sigma}(x)|}.
        \]
        So, since we are assuming $x \notin B_{16d^{3/2}\epsilon}(A)$, we have \[
        |x - \pi_{\Sigma}(x)| \geq 16d^{3/2}\epsilon.\] 
        Using that $x \in \pi_{\Sigma}^{-1}(A)$ to see that $|\pi_{\Sigma}(x)| \leq \eta$, our bound becomes
        \[
        |x| \leq |x - \pi_{\Sigma}(x)| + \pi_{\Sigma}(x)\cdot \frac{(x - \pi_{\Sigma}(x))}{|x - \pi_{\Sigma}(x)|} + \frac{\eta^2}{32d^{3/2}\epsilon}.
        \]
    Combining this with \eqref{eq:xminuspikepislonbound}, we have
        \[
        |x - \pi_{K_{\epsilon}}(x)| \leq |x - \pi_{\Sigma}(x)| - \frac{\epsilon}{4d^{3/2}} + \pi_{\Sigma}(x)\cdot \frac{(x - \pi_{\Sigma}(x))}{|x - \pi_{\Sigma}(x)|} + \frac{\eta^2}{32d^{3/2}\epsilon}.
        \]
        Thus, we have
        \begin{align*} &\int_{\pi_{\Sigma}^{-1}(A)\setminus B_{16d^{3/2}\epsilon}(A)}(|x -\pi_{\Sigma}(x)| - |x - \pi_{K_{\epsilon}}(x)|)d\gamma(x) \\
         & \qquad \geq  (\frac{\epsilon}{4d^{3/2}}- \frac{ \eta^2}{32 d^{3/2}\epsilon})\gamma(\pi_{\Sigma}^{-1}(A)\setminus B_{16d^{3/2}\epsilon}(A)) \\ &\qquad \qquad - \int_{\pi_{\Sigma}^{-1}(A) \setminus B_{16d^{3/2}\epsilon}(A)}\pi_{\Sigma}(x)\cdot\Delta_{\pi_{\Sigma}}(x)d\mu(x),
        \end{align*}
        where we recall the notation $\Delta_{\pi_{\Sigma}}$ from \eqref{eq:barycentrekernel}.
        Let us simplify this bound slightly. First, we use that
        \[
        \gamma(\pi_{\Sigma}^{-1}(A)\setminus B_{16d^{3/2}\epsilon}(A)) = \rho(A) - \gamma(B_{4d^{3/2}}(A)\cap \pi_{\Sigma}^{-1}(A))
        \]
        to find
        \begin{align*}
            &(\frac{\epsilon}{4d^{3/2}}- \frac{ \eta^2}{32 d^{3/2}\epsilon})\gamma(\pi_{\Sigma}^{-1}(A)\setminus B_{16d^{3/2}\epsilon}(A)) \\
            & \qquad \geq (\frac{\epsilon}{4d^{3/2}}- \frac{ \eta^2}{32 d^{3/2}\epsilon})\rho(A)- \frac{\epsilon}{4d^{3/2}}\gamma(\pi_{\Sigma}^{-1}(A)\setminus B_{16d^{3/2}\epsilon}(A)).
        \end{align*}
        Next, we notice that
        \begin{align*}
        &\int_{\pi_{\Sigma}^{-1}(A) \setminus B_{16d^{3/2}\epsilon}(A)}\pi_{\Sigma}(x)\cdot\Delta_{\pi_{\Sigma}}(x)d\mu(x) \\
        & \qquad = \int_{A}\sigma \cdot \mathcal{B}_{\pi_{\Sigma}}(\sigma)d\nu(\sigma) - \int_{\pi_{\Sigma}^{-1}(A) \cap B_{16d^{3/2}\epsilon}(A)}\pi_{\Sigma}(x)\cdot\Delta_{\pi_{\Sigma}}(x)d\mu(x) \\
        & \qquad \leq \eta \int_{A}|B_{\pi_{\Sigma}}(\sigma)|d\nu(\sigma) + M\eta\gamma(\pi_{\Sigma}^{-1}(A)\cap B_{16d^{3/2}\epsilon}(A));
        \end{align*}
        recall the definition of $M$ from \eqref{eq:mphinotation}. Putting this all together, our bound simplifies to 
        \begin{align*} &\int_{\pi_{\Sigma}^{-1}(A)\setminus B_{16d^{3/2}\epsilon}(A)}(|x -\pi_{\Sigma}(x)| - |x - \pi_{K_{\epsilon}}(x)|)d\gamma(x) \\
        &\qquad \geq  \bigg\{(\frac{\epsilon}{4d^{3/2}}  - \frac{\eta^2}{32d^{3/2}\epsilon})\rho(A) - (\frac{\epsilon}{4d^{3/2}} + \eta)\gamma(B_{16d^{3/2}\epsilon}(A) \cap \pi_{\Sigma}^{-1}(A))  \\
        &\qquad \qquad  -\eta \int_{A}|\mathcal{B}_{\pi_{\Sigma}}(\sigma)|d\nu(\sigma)\bigg\}.
        \end{align*}

        In the region $\pi_{\Sigma}^{-1}(A)\cap B_{16d^{3/2}\epsilon}(A)$, our bound on \eqref{eq:differencewewanttoboundddimcross} can be more straightforward. For any $x \in \mathbb{R}^d$, we have
        \begin{align*}
            |x - \pi_{\Sigma}(x)| - |x - \pi_{K_{\epsilon}}(x)| &\geq - |\pi_{\Sigma}(x) - \pi_{K_{\epsilon}}(x)| \\ &\geq -(|\pi_{\Sigma}(x)| + |\pi_{K_{\epsilon}}(x)|) \\ &\geq -(\eta + \frac{\epsilon}{2d}),
        \end{align*}
        so we see that
        \begin{align*} &\int_{\pi_{\Sigma}^{-1}(A)\cap B_{16d^{3/2}\epsilon}(A)}(| x- \pi_{\Sigma}(x)| - |x - \pi_{K_{\epsilon}}(x)|)d\gamma(x)\\
        &\qquad  \geq -(\eta + \frac{\epsilon}{2d})\gamma(B_{16d^{3/2}\epsilon}(A)\cap \pi_{\Sigma}^{-1}(A)).
        \end{align*}
    Thus, since 
    \[
    \frac{\epsilon}{4d^{3/2}} + \frac{\epsilon}{2d} = \frac{1 + 2\sqrt{d}}{4d^{3/2}}\epsilon,
    \]
 combining our bounds yields
    \begin{align*} &\mathscr{J}|_{\pi_{\Sigma}^{-1}(A)}(\Sigma) - \mathscr{J}|_{\pi_{\Sigma}^{-1}(A)}(K_{\epsilon}) \\
        & \qquad \geq \bigg\{(\frac{\epsilon}{4d^{3/2}}  - \frac{\eta^2}{32d^{3/2}\epsilon})\rho(A)  -\eta \int_{A}|\mathcal{B}_{\pi_{\Sigma}}(\sigma)|d\nu(\sigma)\\ & \qquad \qquad - (2\eta + \frac{1 + 2 \sqrt{d}}{4d^{3/2}}\epsilon) \gamma(B_{16d^{3/2}\epsilon}(A) \cap \pi_{\Sigma}^{-1}(A)) \\
        & \qquad \qquad - L (\eta + \epsilon)^2\nu(A)\bigg\},
    \end{align*}
    as was claimed.
    \end{proof}

    \begin{remark}
        The reason that we use $B_{16d^{3/2}\epsilon}(A)$ to divide the region $\pi_{\Sigma}^{-1}(A)$ is the following: if we take $A = B_{\epsilon}(0)$, then we have that $\eta = 2 \epsilon$. Thus, we see that
        \[
        \frac{\epsilon}{4d^{3/2}} - \frac{\eta^2}{32d^{3/2}\epsilon} \geq \frac{\epsilon}{8d^{3/2}} > 0.
        \]
        The fact that this difference is a positive constant times $\epsilon$ is a result of the choice to use $B_{16d^{3/2}\epsilon}(A)$, and plays a key role in the proof of \cref{theorem:existenceofatom}.
    \end{remark}

    Now, we proceed with the main theorem of this subsection, which provides a lower bound on the scaling constant in terms of the $\rho$-mass of any point $\sigma^* \in \Sigma$, and in particular in terms of any atom of $\Sigma$, should one exist. 

    \begin{theorem}[Bounding the scaling constant in terms of atoms]\label{prop:boundingscalingconstantintermsofatoms} Assume $\phi \in C^{1,1}_{\mathrm{loc}}([0, \infty))$ is strictly increasing. Suppose $\Sigma \in \mathcal{S}_{\ell}$ is optimal for some $\ell > 0$. Then, for every point $\sigma^* \in \Sigma$, we have  \begin{equation}\label{eq:boundingbarycentrefieldintermsofatom}
         \beta_{\Sigma} \geq \frac{\ell}{4d^{3/2}}\rho_{\pi_{\Sigma}}\{\sigma^*\}.
    \end{equation}
    \end{theorem}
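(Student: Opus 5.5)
The plan is a competitor argument: shrink $\Sigma$ to free up budget, and spend that budget on a $d$-dimensional cross at $\sigma^*$. By translation invariance of $\mathcal{S}_\ell$, and since $\beta_\Sigma$ does not depend on the base point (\cref{lemma:independenceandinterpretationofscalingconstant}), we may assume $\sigma^* = 0$. If $\rho\{0\} = 0$ there is nothing to prove, because $\beta_\Sigma \geq 0$. To see this, note that $(1-\epsilon)\Sigma \in \mathcal{S}_\ell$, and applying \cref{theorem:barycentregradient} with $\xi(\sigma) = -\sigma$ gives
\[
\mathscr{J}(\Sigma) - \mathscr{J}((1-\epsilon)\Sigma) \geq -\epsilon\beta_\Sigma + o(\epsilon).
\]
Optimality makes the left side $\leq 0$, so $\beta_\Sigma \geq 0$. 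So assume $\rho\{0\}>0$.

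For small $\epsilon>0$, take the competitor
\[
\Sigma'_\epsilon := (1-\epsilon)\Sigma \cup K_{\epsilon \ell}.
\]
It is compact and connected, since $0 \in (1-\epsilon)\Sigma$ and $0\in K_{\epsilon\ell}$. Its length satisfies
\[
\mathcal{H}^1(\Sigma'_\epsilon) \leq (1-\epsilon)\ell + \epsilon\ell = \ell,
\]
so optimality gives $\mathscr{J}(\Sigma) \leq \mathscr{J}(\Sigma'_\epsilon)$. To bound $\mathscr{J}(\Sigma'_\epsilon)$ from above, use the map
\[
F_\epsilon(x) = \begin{cases} \pi_{K_{\epsilon\ell}}(x), & x \in \pi_\Sigma^{-1}(A_\epsilon),\\ (1-\epsilon)\pi_\Sigma(x), & \text{otherwise},\end{cases}
\]
where $A_\epsilon \subseteq \Sigma$ is a small set containing $0$. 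The natural first choice is $A_\epsilon = \{0\}$, so that $\eta = 0$.

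Split the difference into two pieces:
\[
\mathscr{J}(\Sigma) - \mathscr{J}(F_\epsilon) = \Bigl[\mathscr{J}(\Sigma) - \mathscr{J}((1-\epsilon)\pi_\Sigma)\Bigr] + \Bigl[\mathscr{J}|_{\pi_\Sigma^{-1}(A_\epsilon)}((1-\epsilon)\pi_\Sigma) - \mathscr{J}|_{\pi_\Sigma^{-1}(A_\epsilon)}(\pi_{K_{\epsilon\ell}})\Bigr].
\]
For the first bracket, \cref{prop:1generalfirstorderapproximationtheorem1} gives $-\epsilon\beta_\Sigma + o(\epsilon)$. For the second, with $A_\epsilon=\{0\}$ we have $(1-\epsilon)\pi_\Sigma = \pi_\Sigma = 0$ on $\pi_\Sigma^{-1}(\{0\})$, so the bracket equals $\mathscr{J}|_{\pi_\Sigma^{-1}(0)}(\Sigma) - \mathscr{J}|_{\pi_\Sigma^{-1}(0)}(K_{\epsilon\ell})$.

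Apply \cref{lemma:estimatingimprovementfromddimcross} with $A=\{0\}$, $\eta = 0$, and $\epsilon\ell$ in place of $\epsilon$. The $\eta$-terms vanish and the $L$-term is $O(\epsilon^2)$. The remaining term to control is
\[
\gamma\bigl(B_{16d^{3/2}\epsilon\ell}(0)\cap\pi_\Sigma^{-1}(0)\bigr).
\]
This tends to $\gamma(\pi_\Sigma^{-1}(0)\cap\{0\}) \leq M\mu(\Sigma) = 0$, using $\mu(\Sigma)=0$ (which follows from the standing non-charging hypothesis, or from \cref{prop:negligibilityofambiguouslocus} together with $0$ being an atom only off $\Sigma$). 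Hence the second bracket is at least $\frac{\epsilon\ell}{4d^{3/2}}\rho\{0\} - o(\epsilon)$.

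Putting these together, optimality yields
\[
0 \geq -\epsilon\beta_\Sigma + \frac{\epsilon\ell}{4d^{3/2}}\rho\{0\} + o(\epsilon).
\]
Dividing by $\epsilon$ and letting $\epsilon\to0$ gives \eqref{eq:boundingbarycentrefieldintermsofatom}.

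The main obstacle is the restricted-domain bookkeeping: the first-order approximation has to be applied to $\mu$ restricted to $\mathbb{R}^d\setminus\pi_\Sigma^{-1}(0)$, and the hypothesis $\mu(\mathcal I(\cdot))=0$ of \cref{prop:1generalfirstorderapproximationtheorem1} must hold there, i.e.\ $\mu$-a.e.\ $x$ is not on $\Sigma$ or $(1-\epsilon)\Sigma$. If $\mu(\Sigma)=0$ is not available, the fallback is \cref{prop:boundinghigherorderestimatescase1}, with the points near $\mathcal I(\pi_\Sigma)$ handled by the $2M\epsilon\mu(B_\delta(\mathcal I)\cap A)$ term. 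A similar $\delta$-trade-off would handle a degenerate small neighbourhood $A_\epsilon$ if $\{0\}$ alone does not suffice.
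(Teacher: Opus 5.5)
Your argument is correct, under the same implicit non-charging hypotheses the paper uses, and it gives the same constant. It handles the decisive step differently from the paper, and more cleanly.

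The paper splits $\mathscr{J}(\Sigma)-\mathscr{J}(\Sigma'_\epsilon)$ as $[\mathscr{J}(\Sigma)-\mathscr{J}(\Sigma_\epsilon)]+[\mathscr{J}(\Sigma_\epsilon)-\mathscr{J}(\Sigma'_\epsilon)]$ with $\Sigma_\epsilon=(1-\epsilon/\ell)\Sigma$. It estimates the second bracket using a closest-point projection onto the shrunken set. The cross lemma then produces $\rho_{\pi_{\Sigma_\epsilon}}\{0\}$ rather than $\rho_{\pi_\Sigma}\{0\}$. A separate semicontinuity argument is therefore needed to get $\limsup_{\epsilon\to0^+}\rho_{\pi_{\Sigma_\epsilon}}\{0\}\geq\rho_{\pi_\Sigma}\{0\}$: a measurable selection with $\pi_{\Sigma_\epsilon}^{-1}\{0\}=\Phi_\epsilon$, Fatou's lemma, and negligibility of the ambiguous locus.

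You instead test $\Sigma'_\epsilon$ against a single map $F_\epsilon$ built from the original $\pi_\Sigma$. Since $(1-\epsilon)\pi_\Sigma=\pi_\Sigma=0$ on $\pi_\Sigma^{-1}(0)$, the cross lemma is applied to $\Sigma$ itself and gives $\rho_{\pi_\Sigma}\{0\}$ directly, so the limiting step disappears. This is a real gain, because the paper's step is more delicate than it looks. Since $\Phi_0$ is convex and contains $0$, we have $\Phi_\epsilon=(1-\epsilon/\ell)\Phi_0\subseteq\Phi_0$. The continuity argument given there actually proves $\liminf_n\Phi_{\epsilon_n}\subseteq\Phi_0$. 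The inclusion the paper needs holds only up to the radial boundary of $\Phi_0$ (points $x\in\Phi_0$ with $(1+s)x\notin\Phi_0$ for all $s>0$), which would then have to be shown $\gamma$-negligible. Your decomposition never runs into this.

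A few small corrections.

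\begin{itemize}
\item The restricted-domain obstacle in your last paragraph does not arise. Your first bracket compares $\pi_\Sigma$ with $(1-\epsilon)\pi_\Sigma$ against the full measure $\mu$.
\item The preliminary case $\rho\{0\}=0$ is redundant, since the main argument already covers it.
\item Your parenthetical alternative justification of $\mu(\Sigma)=0$ via negligibility of the ambiguous locus is not valid. That proposition says nothing about whether $\mu$ charges $\Sigma$.
\end{itemize}

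What you actually use is $\mu(\Sigma)=\mu((1-\epsilon)\Sigma)=0$ for the first-order approximation, and $\phi'(0)\mu\{0\}=0$ to make $\gamma(B_{16d^{3/2}\epsilon\ell}(0)\cap\pi_\Sigma^{-1}(0))$ tend to $0$. These are implicit standing assumptions; the paper invokes ``$\mu$ has no atoms'' at exactly this point. No fallback via the higher-order estimate can remove them. For example, take $\mu=\delta_0$, $\phi(t)=t$ and any $\Sigma\in\mathcal{S}_\ell$ containing $0$. Then $\beta^{0}_{\pi_\Sigma}=\int_\Sigma\sigma\cdot\mathcal{B}_{\pi_\Sigma}\,d\nu=0$, while $\rho_{\pi_\Sigma}\{0\}=1$.
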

    \begin{proof}
        Take some $\sigma^* \in \Sigma$, and using the translation invariance of the average distance problem assume without loss of generality that $\sigma^* = 0$. Let $\epsilon > 0$. Defining 
        \[
        \Sigma_{\epsilon} = (1 - \frac{\epsilon}{\ell})\Sigma,
        \]
        we have
        \begin{align*}
            \lim_{\epsilon \to 0^+}\frac{\mathscr{J}(\Sigma) - \mathscr{J}(\Sigma_{\epsilon})}{\epsilon} &\geq \lim_{\epsilon \to 0^+}\frac{\mathscr{J}(\pi_{\Sigma}) - \mathscr{J}((1 - \frac{\epsilon}{\ell})\pi_{\Sigma})}{\epsilon} \\
            &\geq - \frac{1}{\ell}\int_{\Sigma}\sigma \cdot \mathcal{B}_{\pi_{\Sigma}}d\nu_{\pi_{\Sigma}} \\
            &= - \frac{1}{\ell}\beta_{\Sigma}.
        \end{align*}
        Now, define 
        \[
        \Sigma'_{\epsilon} = \Sigma_{\epsilon}\cup K_{\epsilon},
        \]
        then $\Sigma'_{\epsilon}$ is compact, connected, and 
        \[
        \mathcal{H}^1(\Sigma'_{\epsilon}) \leq \mathcal{H}^1(\Sigma_{\epsilon}) + \mathcal{H}^1(K_{\epsilon})\leq \ell.
        \]
        So, by the optimality of $\Sigma$, we have
        \[
        0 \geq \mathscr{J}(\Sigma) - \mathscr{J}(\Sigma_{\epsilon}').
        \]
        Now, take any $\pi_{\Sigma_{\epsilon}} \in \Pi_{\Sigma_{\epsilon}}$, and define
        \[
        F_{\epsilon}(x):= \bigg \{ \begin{matrix}
            \pi_{\Sigma_{\epsilon}}(x), & x \notin \pi_{\Sigma}^{-1}\{0\}, \\
            \pi_{K_{\epsilon}}(x), & x \in \pi_{\Sigma}^{-1}\{0\}.
        \end{matrix}
        \]
        Then, $\img(F_{\epsilon}) \subseteq \Sigma_{\epsilon}'$, and as a result,
        \begin{align*}
            \mathscr{J}(\Sigma_{\epsilon}) - \mathscr{J}(\Sigma_{\epsilon}') &\geq \mathscr{J}(\pi_{\Sigma_{\epsilon}}) - \mathscr{J}(F_{\epsilon}) \\
            &= \mathscr{J}|_{\pi_{\Sigma}^{-1}\{0\}}(\pi_{\Sigma_{\epsilon}}) - \mathscr{J}|_{\pi_{\Sigma}^{-1}\{0\}}(\pi_{K_{\epsilon}}).
        \end{align*}
        So, applying \cref{lemma:estimatingimprovementfromddimcross} with $A = \{0\}$, we see since $\eta = 0$ that
        \[
        \mathscr{J}(\Sigma_{\epsilon})- \mathscr{J}(\Sigma_{\epsilon}') \geq \frac{\epsilon}{4d^{3/2}}\rho_{\pi_{\Sigma_{\epsilon}}}(\{0\}) +o(\epsilon),
        \]
        where we are using that $\gamma(B_{16d^{3/2}\epsilon}(0))  = o(1)$ since $\mu$ has no atoms. Putting this all together, we see that
        \begin{align*}
            0 & \geq \mathscr{J}(\Sigma) - \mathscr{J}(\Sigma_{\epsilon}') \\ &= \mathscr{J}(\Sigma) - \mathscr{J}(\Sigma_{\epsilon}) + \mathscr{J}(\Sigma_{\epsilon}) - \mathscr{J}(\Sigma'_{\epsilon}) \\
            &\geq - \frac{\epsilon}{\ell}\beta_{\Sigma} + \frac{\epsilon}{4d^{3/2}}\rho_{\pi_{\Sigma_{\epsilon}}}(\{0\}) + o(\epsilon),
        \end{align*}
        so 
        \begin{equation}\label{eq:betalowerboundintermediate}
            \beta_{\Sigma} \geq \limsup_{\epsilon \to 0^+}\frac{\ell}{4d^{3/2}}\rho_{\pi_{\Sigma_{\epsilon}}}(\{0\}).
        \end{equation}
        
        We now claim that we can choose $\pi_{\Sigma_{\epsilon}} \in \Pi_{\Sigma_{\epsilon}}$ for each $\epsilon > 0$ so that
        \[
        \limsup_{\epsilon \to 0^+}\rho_{\pi_{\Sigma_{\epsilon}}}\{0\} \geq \rho_{\pi_{\Sigma}}\{0\}.
        \]
        Indeed, for each $\epsilon > 0$, define 
        \[
        \Phi_{\epsilon} = \{x\in \mathbb{R}^d \ | \ \dist(x, \Sigma_{\epsilon}) = \dist(x, 0)\},
        \]
        where here $\Sigma_0 = \Sigma$. For any $\epsilon >0$, we may use measurable selection \cref{lemma:measurableselection} to take some $\pi_{\Sigma_{\epsilon}} \in \Pi_{\Sigma_{\epsilon}}$ such that $\pi_{\Sigma_{\epsilon}}^{-1}\{0\} = \Phi_{\epsilon}$. So, for any sequence of positive real numbers $\epsilon_n$ converging to $0$, we have 
        \begin{align*}
            \limsup_{\epsilon \to 0^+} \rho_{\pi_{\Sigma_{\epsilon}}}\{0\} &= \limsup_{n \to \infty} \int_{\mathbb{R}^d}\chi_{\Phi_{\epsilon_n}}\phi'(\dist(x, \Sigma_{\epsilon}))d \mu \\
            & \geq \liminf_{n \to \infty} \int_{\mathbb{R}^d}\chi_{\Phi_{\epsilon_n}}\phi'(\dist(x, \Sigma_{\epsilon}))d \mu \\ 
            & \geq \int_{\mathbb{R}^d}\liminf_{n \to \infty}\chi_{\Phi_{\epsilon_n}}\phi'(\dist(x, \Sigma_{\epsilon}))d \mu,
        \end{align*}
        where the last inequality uses Fatou's lemma; this is justified since $\phi'$ is assumed to be nonnegative. 
        Notice that for any $\epsilon_1, \epsilon_2 \geq 0$, we can use the triangle inequality to get the bound
        \begin{equation}\label{eq:distxsigmacontinuity}
            |\dist(x, \Sigma_{\epsilon_1}) - \dist(x, \Sigma_{\epsilon_2})| \leq |\epsilon_1 - \epsilon_2| \frac{1}{\ell}\dist(x, \Sigma).
        \end{equation}
        Therefore, since $\phi'$ is continuous, we know that the family of functions $\phi'(\dist(x, \Sigma_{\epsilon_n}))$ converge pointwise to $\phi'(\dist(x, \Sigma))$. In particular, 
        \[
        \liminf_{n \to \infty}\chi_{\Phi_{\epsilon_n}}\phi'(\dist(x, \Sigma_{\epsilon})) = \chi_{\liminf_n \Phi_{\epsilon_n}}\phi'(\dist(x, \Sigma)),
        \]
        where 
        \[
        \liminf_n \Phi_{\epsilon_n} := \bigcup_{N \in \mathbb{N}}\bigcap_{n > N}\Phi_{\epsilon_n}.
        \]
        Now, we claim that $\liminf_{n}\Phi_{\epsilon_n} \supseteq \Phi_0$. Indeed, for each $x$, the map $\epsilon \mapsto \dist(x, \Sigma_{\epsilon})$ is continuous by \eqref{eq:distxsigmacontinuity}. In particular, if $\dist(x, \Sigma_{\epsilon})=\dist(x, 0)$ for all sufficiently small $\epsilon > 0$, then $\dist(x, \Sigma) = \dist(x, 0)$. So, we have
        \[
        \liminf_{n}\Phi_{\epsilon_n} \supseteq \Phi_0,
        \]
        and thus since $\phi'(\dist(x, \Sigma)) \geq 0$, 
        \begin{align*}
            \limsup_{\epsilon \to 0^+} \rho_{\pi_{\Sigma_{\epsilon}}}\{0\} &\geq \int_{R^{d}}\chi_{\Phi_0}\phi'(\dist(x, \Sigma))d \mu \\
            &=\gamma_{\pi_{\Sigma}}(\Phi_0).
        \end{align*}
        Finally, from \cref{prop:negligibilityofambiguouslocus} and the fact that $\gamma \ll \mu$, we know that 
        \[
        \gamma(\Phi_0 \setminus \pi_{\Sigma}^{-1}\{0\}) = 0,
        \]
        upon which we conclude that 
        \[
        \limsup_{\epsilon \to 0^+} \rho_{\pi_{\Sigma_{\epsilon}}}\{0\} \geq \rho_{\pi_{\Sigma}}(\{0\}).
        \]
        Combining this with \eqref{eq:betalowerboundintermediate}, we see that
         \[
         \beta_{\Sigma} \geq \frac{\ell}{4d^{3/2}}\rho_{\pi_{\Sigma}}\{0\},
         \]
         as claimed. 
    \end{proof}
    
\begin{remark}\label{remark:scalingandlipschitzinequality}
    Notice that for any noncut point $\sigma^* \in \Sigma$, since the identity map is 1-Lipschitz,
    \begin{align*}
        \beta_{\Sigma} &= \int_{\Sigma} (\sigma - \sigma^*) \cdot \mathcal{B}_{\pi_{\Sigma}}(\sigma) d\nu_{\pi_{\Sigma}}(\sigma) \\
        &\leq \max_{\xi \in \mathrm{Lip}^*(\Sigma)}\int_{\Sigma} \xi(\sigma)\cdot \mathcal{B}_{\pi_{\Sigma}}(\sigma)d\nu_{\pi_{\Sigma}}(\sigma).
    \end{align*}
\end{remark}

Finally, we combine \cref{prop:boundingscalingconstantintermsofatoms} with \cref{cor:boundingmassofnoncutpoints} to prove the equivalence between nontriviality of the barycentre field and existence of an atom for average distance minimizers.

\begin{corollary}[Nontrivial barycentre field and atoms]\label{cor:nontrivialbarycentrefieldandatoms}
    Assume $\phi \in C^{1,1}_{\mathrm{loc}}([0, \infty))$ is strictly increasing. Let $\Sigma \in \mathcal{S}_{\ell}$ be optimal, and let $\pi_{\Sigma} \in \Pi_{\Sigma}$. Then, the following are equivalent:
    \begin{enumerate}[label = (\roman*)]
        \item $\mathcal{B}_{\pi_{\Sigma}}$ is nontrivial,
        \item Every noncut point $\sigma^* \in \Sigma$ is an atom, i.e. $\nu_{\pi_{\Sigma}}\{\sigma^*\} > 0$,
        \item There exists a point $\sigma^* \in \Sigma$ with $\nu_{\pi_{\Sigma}}\{\sigma^*\} > 0$,
        \item There exists $\sigma^* \in \Sigma$ with $\rho_{\pi_{\Sigma}}\{\sigma^*\} > 0$,
        \item $\beta_{\pi_{\Sigma}} > 0$.
    \end{enumerate}
\end{corollary}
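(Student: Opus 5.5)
I will prove the cycle of implications (i) $\Rightarrow$ (ii) $\Rightarrow$ (iii) $\Rightarrow$ (iv) $\Rightarrow$ (v) $\Rightarrow$ (i), using the results already established.

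For (i) $\Rightarrow$ (ii), I apply \cref{cor:boundingmassofnoncutpoints}. If $\mathcal{B}_{\pi_{\Sigma}}$ is nontrivial, then $\int_\Sigma|\mathcal{B}_{\pi_\Sigma}|d\nu>0$. Hence $|\mathcal{B}_{\pi_\Sigma}(\sigma^*)|\nu(\sigma^*)>0$ for every noncut point $\sigma^*$, so every noncut point is an atom. That corollary is stated for optimal $\Sigma$, but its proof goes through \cref{theorem:boundingmassofnoncutpoints}, which assumes $\mu(\Sigma)=0$ for $\Sigma\in\mathcal{S}$. Here I will instead use only that $\mu$ does not charge $\Sigma$; the hypotheses of \cref{cor:nontrivialbarycentrefieldandatoms} should be read as implicitly including this, as in the introduction. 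I will also treat separately the degenerate case where $\Sigma$ is a single point. In that case the translation argument of \cref{lemma:netbarycentrefieldiszero} gives $\mathcal{B}^{\mathrm{net}}=\mathcal{B}(\sigma)\nu(\sigma)=0$, so (i) fails.

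For (ii) $\Rightarrow$ (iii), it suffices that $\Sigma$ has at least one noncut point, which holds by \cite{kuratowski}*{§47 Theorem IV.5} when $\Sigma$ has more than one point.

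For (iii) $\Rightarrow$ (iv), I use $\rho\{\sigma^*\}=\int_{\pi_\Sigma^{-1}\{\sigma^*\}}\phi'(|x-\pi_\Sigma(x)|)\,d\mu$. Since $\phi$ is $C^1$ and strictly increasing, $\phi'\ge 0$ and $\phi'$ vanishes only on a set with empty interior. The subtle point is that $\phi'$ could vanish at isolated distances. I handle this as follows: $\mu$-a.e. $x\in\pi_\Sigma^{-1}\{\sigma^*\}$ has $x\neq\sigma^*$, since $\mu$ has no atoms on $\Sigma$. The fibre has positive $\mu$-mass, so I must show the set of $x$ with $\phi'(|x-\sigma^*|)=0$ does not carry all of it. This is the step I expect to need care. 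I would avoid it by proving (iii) $\Rightarrow$ (v) directly through the scaling argument, rerunning the proof of \cref{prop:boundingscalingconstantintermsofatoms} with the improvement from the cross measured by $\phi$ rather than $\phi'$. Concretely, $\phi(|x|)-\phi(|x|-\epsilon/4d^{3/2})\ge c\,\epsilon$ on a positive-measure subset, because $\phi$ is strictly increasing. This gives $\beta_\Sigma\ge c\,\nu\{\sigma^*\}$-type positivity. If the zeros of $\phi'$ are negligible, (iii) $\Rightarrow$ (iv) is immediate.

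For (iv) $\Rightarrow$ (v), \cref{prop:boundingscalingconstantintermsofatoms} gives $\beta_\Sigma\ge \frac{\ell}{4d^{3/2}}\rho\{\sigma^*\}>0$.

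For (v) $\Rightarrow$ (i), $\beta_\Sigma=\int(\sigma-a)\cdot\mathcal{B}_{\pi_\Sigma}\,d\nu$ is positive, so $\mathcal{B}_{\pi_\Sigma}$ cannot vanish $\nu$-a.e.

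The main obstacle is the passage from a $\nu$-atom to positivity of $\beta$ when $\phi'$ may have zeros. I would handle it with the direct $\phi$-based cross estimate above.
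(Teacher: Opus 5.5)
Your chain of implications, and the result you invoke at each arrow, coincide with the paper's proof:
\begin{itemize}
\item (i)$\Rightarrow$(ii) from \cref{cor:boundingmassofnoncutpoints};
\item (ii)$\Rightarrow$(iii) from Kuratowski;
\item (iv)$\Rightarrow$(v) from \cref{prop:boundingscalingconstantintermsofatoms};
\item the trivial (v)$\Rightarrow$(i), which the paper's proof labels (iv)$\Rightarrow$(i).
\end{itemize}
Reading $\mu(\Sigma)=0$ as an implicit hypothesis also matches the paper, which invokes it ``by assumption'' in (iii)$\Rightarrow$(iv). Your fallback sentence for that step is exactly the paper's argument: $\mu(\Sigma)=0$ makes $|x-\pi_\Sigma(x)|>0$ for $\mu$-a.e.\ $x$, so $\phi'(|x-\pi_\Sigma(x)|)>0$ on the fibre and $\rho_{\pi_\Sigma}\{\sigma^*\}>0$.

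What fails is the workaround you propose for the case where $\phi'$ has zeros, and it fails for two reasons. First, strict monotonicity gives $\phi(a)-\phi(a-h)>0$ but not $\phi(a)-\phi(a-h)\geq ch$; if $\phi'(a)=0$ the difference is $o(h)$. Translate so that $\sigma^*=0$ and let $\Sigma_\epsilon=(1-\frac{\epsilon}{\ell})\Sigma$ as in the proof of \cref{prop:boundingscalingconstantintermsofatoms}. On $\pi_\Sigma^{-1}\{0\}$ one has $\dist(x,\Sigma_\epsilon)\leq|x|$ and $\dist(x,K_\epsilon)\geq|x|-\frac{\epsilon}{2d}$. By dominated convergence, the cross therefore gains at most $\frac{\epsilon}{2d}\rho_{\pi_\Sigma}\{0\}+o(\epsilon)$ on that fibre. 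In exactly the case you want to cover, $\nu_{\pi_\Sigma}\{0\}>0=\rho_{\pi_\Sigma}\{0\}$, this gain is $o(\epsilon)$, and the scaling comparison yields only $\beta_\Sigma\geq 0$. Second, even a valid direct proof of (iii)$\Rightarrow$(v) would remove the only arrow into (iv), so (iv) would drop out of the equivalence.

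The worry behind the workaround is fair. The function $\phi(t)=1+(t-1)^3$ is smooth, strictly increasing, has $\phi(0)=0$, and has $\phi'(1)=0$, so strict monotonicity alone does not give $\phi'>0$. The paper silently reads ``strictly increasing'' as $\phi'>0$ on $(0,\infty)$; it does the same in the proof of \cref{prop:negligibilityofambiguouslocus}. The right repair is to make this an explicit hypothesis. It holds automatically under \eqref{eq:alpha2}, which forces $\phi'\geq\lambda(c)>0$ on $[c,\diam\supp\mu]$. No cross construction can replace this hypothesis, because the first-order effect of the cross on a fibre is governed by $\rho$, not $\nu$.

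Finally, your singleton remark as phrased ((iii) holds while (i) fails) would contradict the statement. The correct observation is that a singleton $\{\sigma\}$ is never optimal for $\ell>0$. There $\beta_\Sigma=(\sigma-a)\cdot\mathcal{B}^{\mathrm{net}}_{\pi_\Sigma}=0$, whereas \cref{prop:boundingscalingconstantintermsofatoms} would give $\beta_\Sigma\geq\frac{\ell}{4d^{3/2}}\rho_{\pi_\Sigma}\{\sigma\}>0$.
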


\begin{proof}
    ((i) $\implies$ (ii)) If $\mathcal{B}_{\pi_{\Sigma}}$ is nontrivial, then by \cref{cor:boundingmassofnoncutpoints}, there exists some constant $\lambda > 0$ such that for every noncut point $\sigma^* \in \Sigma$, 
    \[
    |\mathcal{B}_{\pi_{\Sigma}}(\sigma^*)|\nu_{\pi_{\Sigma}}\{\sigma^*\} \geq \lambda \int_{\Sigma}|\mathcal{B}_{\pi_{\Sigma}}(\sigma)|d\nu_{\pi_{\Sigma}}(\sigma) > 0.
    \]

    ((ii) $\implies$ (iii)) By  \cite{kuratowski}*{§47 Theorem IV.5}, $\Sigma$ always contains at least two noncut points.

    ((iii) $\implies$ (iv)) Suppose $\sigma^* \in \Sigma$ with $\nu_{\pi_{\Sigma}}\{\sigma^*\} > 0$. Then, $\pi_{\Sigma}^{-1}\{\sigma^*\}$ is a set with positive $\mu$-mass. Since $\phi$ is strictly increasing and $\mu(\Sigma) = 0$ by assumption, we have that $\phi'(|x - \pi_{\Sigma}(x)|) > 0$ $\mu$-a.e., and therefore 
    \[
    \rho_{\pi_{\Sigma}}\{\sigma^*\} = \int_{\pi_{\Sigma}^{-1}\{\sigma^*\}} \phi'(|x - \pi_{\Sigma}(x)|) d \mu(x) > 0.
    \]

    ((iv) $\implies$ (v)) By \cref{prop:boundingscalingconstantintermsofatoms}, we have
    \[
    \beta_{\Sigma} \geq \frac{\ell}{4d^{3/2}}\rho_{\pi_{\Sigma}}\{\sigma^*\} > 0.
    \]
    ((iv) $\implies$ (i)) is obvious.
\end{proof}

\subsection{Existence of atoms}\label{sec:existenceofatoms}

We now proceed with the main theorem of this section: the existence of atoms for average distance minimizers, provided $\phi \in C^{1,1}_{\mathrm{loc}}([0, \infty))$ satisfies \eqref{eq:alpha2} and is strictly increasing. This generalizes the results of \cite{Obrie25}, where existence of an atom was shown for $\phi(t) = t^p$ with $p =2$ or $p > \frac{1}{2}(3 + \sqrt{5})$; in particular, \cref{theorem:existenceofatom} applies to the prototypical $\phi(t) = t$ case of the average distance problem, as well as $\phi(t) = t^p$ for all $p \geq 2$.

The proofs of both \cref{theorem:existenceofatom} and \cite{Obrie25}*{Theorem 3.5} proceed by contradiction. Supposing we are given a minimizer $\Sigma \in \mathcal{S}_{\ell}$ with no atom, we wish to modify $\Sigma$ in such a way that we gain back $\epsilon$ length budget, while controlling the increase to the objective value using the barycentre field. By \cref{cor:nontrivialbarycentrefieldandatoms}, the assumption that $\Sigma$ has no atoms implies that $\Sigma$ has trivial barycentre field, so the increase to the objective value from the budget-gaining modification only depends on the higher order terms in the barycentre field expansion. If we add back the gained $\epsilon$ length in a clever way, we can hope to get a decrease to the objective value which outweighs these higher order terms; this would then allows us to construct a competitor with strictly smaller objective value than that of $\Sigma$, a contradiction. 

The crux of this argument lies in adding back the recovered $\epsilon$ length in such a way that the improvement to the objective value outweighs the increase from the budget-gaining modification. In particular, it is crucial to control the higher order terms in the barycentre field expansion for the budget-gaining modification. This is one of the major obstructions to generalizing the strategy of proof of \cite{Obrie25}*{Theorem 3.5} to, for example, the case $\phi(t) = t$; indeed, \cite{Obrie25} relies on a global rescaling to gain back length, whose higher-order terms are much more difficult to control when the map $t \mapsto \phi(|t|)$ is no longer $C^{1}$ at the origin. Instead, we will rely on a local modification to gain back $\epsilon$ length from $\Sigma$, as the higher-order terms coming from such a modification can be bounded sufficiently well using \cref{prop:boundinghigherorderestimatescase1}. In addition to simplifying the proof, this local argument allows us to avoid the issues caused by lower regularity of $t \mapsto \phi(|t|)$ to the proof strategy of \cite{Obrie25}*{Theorem 3.5}, allowing us to prove existence of atoms in cases that were previously out of reach.

\begin{theorem}[Minimizers have atoms]\label{theorem:existenceofatom}
     Assume $\phi \in C^{1,1}_{\mathrm{loc}}([0, \infty))$ is strictly increasing and satisfies \eqref{eq:alpha2}. Assume that $\mu(B_{\epsilon}(x)) = o(\epsilon)$ for each $x \in \mathbb{R}^d$. Then, any minimizer $\Sigma \in \mathcal{S}_{\ell}$ has an atom. 
\end{theorem}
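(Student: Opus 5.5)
We argue by contradiction. Suppose $\Sigma \in \mathcal{S}_{\ell}$ is a minimizer with no atom. By \cref{cor:nontrivialbarycentrefieldandatoms}, $\mathcal{B}_{\pi_{\Sigma}} = 0$ for $\nu$-a.e.\ point, and $\rho_{\pi_{\Sigma}}\{\sigma\} = 0$ for every $\sigma \in \Sigma$. Consequently every first-order term produced by \cref{prop:boundinghigherorderestimatescase1} or \cref{lemma:estimatingimprovementfromddimcross} that involves $\mathcal{B}_{\pi_{\Sigma}}$ vanishes, and only second-order quantities remain.

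By \cite{kuratowski}*{§47 Theorem IV.5}, $\Sigma$ has two distinct noncut points $\sigma_1 \neq \sigma_2$. Take noncut-neighbourhood systems $\{B^1_n\}$ and $\{B^2_n\}$ from \cref{lemma:noncutneighbourhood}. Shrinking these and passing to subsequences, we may assume both sets have comparable scale $\epsilon_n \to 0$ and lie at positive distance from each other. \cref{lemma:noncutneighbourhoodshavepositivemass} gives $\rho(B^i_n) \geq c\,\epsilon_n$. For each $n$ we compare the two masses: we relabel so that $\rho(B^1_n) \leq \rho(B^2_n)$, remove $B^1_n$, and spend the recovered budget at $\sigma_2$. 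The competitor is
\[
\Sigma'_n = (\Sigma \setminus B^1_n) \cup (\sigma_2 + K_{\epsilon'_n}), \qquad \epsilon'_n := \mathcal{H}^1(B^1_n).
\]
Since $B^1_n$ is connected and reaches $\partial B_{\epsilon_n}(\sigma_1)$, we have $\epsilon'_n \geq \epsilon_n$. The set $\Sigma'_n$ is compact, connected, and lies in $\mathcal{S}_{\ell}$.

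\textbf{Cost of removal.} Apply \cref{prop:boundinghigherorderestimatescase1} with $F = \pi_{\Sigma}$ and $G = \pi_{\partial B^1_n}\circ \pi_{\Sigma}$ on $\pi_{\Sigma}^{-1}(B^1_n)$, with $G = \pi_\Sigma$ elsewhere, as in the proof of \cref{lemma:noncutneighbourhoodshavepositivemass}, and with a parameter $\delta = \delta_n$ to be chosen. The first-order term is zero, so the increase in objective value is at most
\[
\frac{\epsilon_n^2}{2\delta_n}\rho(B^1_n) + L\epsilon_n^2\nu(B^1_n) + 2M\epsilon_n\,\mu\big(B_{\delta_n}(\Sigma)\cap \pi_{\Sigma}^{-1}(B^1_n)\big).
\]

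\textbf{Gain from the cross.} Apply \cref{lemma:estimatingimprovementfromddimcross} with $A = B_{\epsilon'_n}(\sigma_2)\cap\Sigma$ (a set containing a noncut neighbourhood of $\sigma_2$) and $\epsilon = \epsilon'_n$, so that $\eta \leq 2\epsilon'_n$. The barycentre term vanishes. The $\gamma(B_{16d^{3/2}\epsilon'_n}(A)\cap\pi_\Sigma^{-1}(A))$ term is $O(\epsilon_n)\,\mu(B_{C\epsilon_n}(\sigma_2)) = o(\epsilon_n^2)$ by the hypothesis $\mu(B_\epsilon(x)) = o(\epsilon)$. The Lipschitz term is $O(\epsilon_n^2\nu(A)) = o(\epsilon_n^2)$. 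By the remark after that lemma, the gain is therefore at least
\[
\frac{\epsilon'_n}{8d^{3/2}}\rho(A) - o(\epsilon_n^2) \;\geq\; \frac{\epsilon_n}{8d^{3/2}}\rho(B^2_n) - o(\epsilon_n^2),
\]
and the right-hand side is of order at least $\epsilon_n^2$ by \cref{lemma:noncutneighbourhoodshavepositivemass}.

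\textbf{Comparison.} Optimality of $\Sigma$ requires the gain to be at most the cost. Because $\rho(B^1_n) \leq \rho(B^2_n)$, choosing $\delta_n \gg \epsilon_n$ (say $\delta_n = \sqrt{\epsilon_n}$) makes the first cost term $o(\epsilon_n\rho(B^2_n))$. The $L$-term is $o(\epsilon_n^2)$ because $\nu(B^1_n)\to 0$.

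\textbf{Main obstacle.} The remaining difficulty is the near-$\Sigma$ term $2M\epsilon_n\mu(B_{\delta_n}(\Sigma)\cap\pi_\Sigma^{-1}(B^1_n))$. This must be $o(\epsilon_n\rho(B^2_n))$ while also $\delta_n/\epsilon_n \to \infty$. My first attempt is to use $\mu(\Sigma) = 0$ and choose $\delta_n \to 0$ slowly, so that the mass near $\Sigma$ projecting into $B^1_n$ is a vanishing fraction of $\nu(B^1_n)$. If this ratio is not automatic, I would use the fact that $\phi'(t)\approx\phi'(0)$ for small $t$, together with strict monotonicity of $\phi$, to compare $\mu$-mass near $\Sigma$ with $\rho$-mass. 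Alternatively, I would pass to a subsequence of scales or neighbourhoods along which this fraction is small, using a pigeonhole argument over dyadic shells and choosing $\delta_n$ accordingly. Once this term is controlled, the gain strictly exceeds the cost for large $n$, so $\mathscr{J}(\Sigma'_n) < \mathscr{J}(\Sigma)$, contradicting minimality.
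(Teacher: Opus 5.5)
Your construction is essentially the paper's. You remove a noncut neighbourhood at one noncut point, place a $d$-dimensional cross at the other, decide which by comparing $\rho$-masses, and estimate with \cref{prop:boundinghigherorderestimatescase1} and \cref{lemma:estimatingimprovementfromddimcross}. However, the argument is not closed. The term $2M\epsilon_n\,\mu(B_{\delta_n}(\Sigma)\cap\pi_{\Sigma}^{-1}(B^1_n))$ is left open, and with $\delta_n\gg\epsilon_n$ it cannot be controlled. Every $x$ in that set has $|x-\pi_\Sigma(x)|<\delta_n$ and $\pi_\Sigma(x)\in B^1_n$, so the set lies in a ball of radius $\delta_n+O(\epsilon_n)$ about $\sigma_1$. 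The hypothesis $\mu(B_r(x))=o(r)$ then bounds the term only by $\epsilon_n\cdot o(\delta_n)$. For $\delta_n=\sqrt{\epsilon_n}$ this is $o(\epsilon_n^{3/2})$, which does not beat a gain of order $\epsilon_n^2$. Your fallback ideas do not supply the missing smallness either. The condition $\mu(\Sigma)=0$ gives no uniform control on the fraction of $\pi_\Sigma^{-1}(B^1_n)$ lying within $\delta_n$ of $\Sigma$ as $B^1_n$ shrinks. The crude bound by $\nu(B^1_n)$ is also useless against $\rho(B^2_n)$. When $\phi'(0)=0$ the two are not comparable at all. When $\phi'>0$ on $[0,\diam\supp\mu]$, the resulting constant $2M/\min\phi'\geq 2$ is far larger than $\frac{1}{8d^{3/2}}$.

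The missing observation is that you never need $\delta_n/\epsilon_n\to\infty$. Because you already arranged $\rho(B^1_n)\le\rho(B^2_n)$, take $\delta_n=K\epsilon_n$. The first removal term is then $\frac{\epsilon_n^2}{2\delta_n}\rho(B^1_n)\le\frac{\epsilon_n}{2K}\rho(B^2_n)$. For any fixed $K>4d^{3/2}$, this is a fixed fraction smaller than the gain $\frac{\epsilon_n}{8d^{3/2}}\rho(B^2_n)$. The near-$\Sigma$ set now lies in a ball of radius $O(\epsilon_n)$ about $\sigma_1$. So the same hypothesis $\mu(B_r(x))=o(r)$ that you used for the cross makes that term $o(\epsilon_n^2)$. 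The net improvement is at least $(\frac{1}{8d^{3/2}}-\frac{1}{2K})\epsilon_n\rho(B^2_n)-o(\epsilon_n^2)$, which is positive for large $n$ by \cref{lemma:noncutneighbourhoodshavepositivemass}. This is exactly how the paper finishes, with $\delta=\frac{8d^{3/2}}{C}\epsilon_n$ where $C>0$ is the limiting mass ratio.

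There are two secondary fixes. First, spend only $\epsilon_n$ of the recovered budget (use $K_{\epsilon_n}$, as the paper does), not $\epsilon'_n=\mathcal{H}^1(B^1_n)$. If $\epsilon'_n\gg\epsilon_n$, the error terms in \cref{lemma:estimatingimprovementfromddimcross} are only $o((\epsilon'_n)^2)$, not the $o(\epsilon_n^2)$ you wrote, while your lower bound on $\rho(A)$ is only of order $\epsilon_n$. Second, you need $\diam(B^1_n)\lesssim\epsilon_n$. This is required both for $\|F-G\|_\infty\lesssim\epsilon_n$ in the removal estimate and for the $O(\epsilon_n)$-ball containment above.
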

\begin{proof}
    Let $\Sigma \in \mathcal{S}_{\ell}$, and suppose for the sake of contradiction that $\nu$ has no atoms. Let $\sigma_1, \sigma_2 \in \Sigma$ be two distinct noncut points. Write $\sigma^* = \sigma_1$, and take $\sigma_2 = 0$ to be the origin. By \cref{lemma:noncutneighbourhood}, take $\{B_n\}_{n \in \mathbb{N}}$ to be a noncut neighbourhood system for $\sigma^*$, and let $\epsilon_n = \dist(\sigma^*, \partial B_n)$, where the boundary is taken with respect to the subspace topology on $\Sigma$. Without loss of generality, assume that \begin{equation}\label{eq:assumptiononnoncutpointcomparison}
        \limsup_{n \to \infty}\frac{\rho(B_{\epsilon_n}(0))}{\rho(B_{\epsilon_n}(\sigma^*))} > 0;
    \end{equation}
    if this is not the case then we may interchange the roles of $\sigma^*$ and $0$. Notice that 
    \[
    \frac{\rho(B_{\epsilon_n}(0))}{\rho(B_n)} \geq \frac{\rho(B_{\epsilon_n}(0))}{\rho(B_{\epsilon_n}(\sigma^*))}
    \]
    for each $n$. Using \eqref{eq:assumptiononnoncutpointcomparison} and passing to a subsequence, we may assume that
    \begin{equation}\label{eq:Cdefinition}
         C := \lim_{n \to \infty}\frac{\rho(B_{\epsilon_n}(0))}{\rho(B_n)}
    \end{equation}
    converges, and $C > 0$. Passing to a further subsequence, assume that $B_n \cap B_{\epsilon_n}(0) = \emptyset$ for each $n \in \mathbb{N}$. For each $n$, define
    \[
    \Sigma'_n= (\Sigma \setminus B_n)\cup K_{\epsilon_n}.
    \]
    Then, $\Sigma'_n$ is compact, connected, and $\mathcal{H}^1(\Sigma'_n)\leq \ell$. We begin with a technical lemma which will form the basis for our proof. 
    \begin{lemma}[Conditional bound]\label{lemma:conditionalbound}
        Take the assumptions in \cref{theorem:existenceofatom}. Suppose $\Sigma \in \mathcal{S}_{\ell}$ is a minimizer, and $\Sigma$ has no atom. Then, for any $n \in \mathbb{N}$, for any $\delta > 0$ possibly depending on $n$, we have
        \begin{equation}\label{eq:conditionalboundequation}
            \begin{split}
                \mathscr{J}(\Sigma) - \mathscr{J}(\Sigma'_n) &\geq \frac{\epsilon_n}{8d^{3/2}}\rho(B_{\epsilon_n}(0)) - \frac{\epsilon_n^2}{2\delta}\rho(B_n) \\ &\qquad - 2M\epsilon_n \mu(B_{\delta}(\Sigma)\cap \pi_{\Sigma}^{-1}(\overline{B}_n)) + o(\epsilon_n^2).
            \end{split}
        \end{equation}
    \end{lemma}
    \begin{proof}
        Taking the boundary $\partial B_n$ with respect to the subspace topology on $\Sigma$, define
    \[
    F_n(x) := \bigg \{ \begin{matrix}
        \pi_{K_{\epsilon_n}}(x),
        & x \in \pi_{\Sigma}^{-1}(B_{\epsilon_n}(0)) \\
        \pi_{\partial B_n}(x), & x \in \pi_{\Sigma}^{-1}(\overline{B}_n), \\
        \pi_{\Sigma}(x), & \text{otherwise}.
    \end{matrix}
    \]
    Then, we see that $\img(F_n) \subseteq \Sigma_n'$, and therefore
    \begin{align*}
         \mathscr{J}(\Sigma) - \mathscr{J}(\Sigma'_n) &\geq \mathscr{J}(\pi_{\Sigma}) -\mathscr{J}(F_n) \\
         &\geq \mathscr{J}|_{\pi_{\Sigma}^{-1}(B_{\epsilon_n}(0))}(\Sigma) - \mathscr{J}|_{\pi_{\Sigma}^{-1}(B_{\epsilon_n}(0))}(K_{\epsilon_n})\\
         &\qquad + \mathscr{J}|_{\pi_{\Sigma}^{-1}(\overline{B}_n)}(\Sigma) - \mathscr{J}|_{\pi_{\Sigma}^{-1}(\overline{B}_n)}(\partial B_n).
    \end{align*}
    
    First, we use \cref{lemma:estimatingimprovementfromddimcross} to bound the term \[\mathscr{J}|_{\pi_{\Sigma}^{-1}(B_{\epsilon_n}(0))}(\Sigma) - \mathscr{J}|_{\pi_{\Sigma}^{-1}(B_{\epsilon_n}(0))}(K_{\epsilon_n}).\] Indeed, take $A = B_{\epsilon_n}(0) \cap \Sigma$, and notice that $\eta= \diam(A) \leq 2\epsilon_n$. So, by \cref{lemma:estimatingimprovementfromddimcross}, we have
    \begin{align*} &\mathscr{J}|_{\pi_{\Sigma}^{-1}(B_{\epsilon_n}(0))}(\Sigma) - \mathscr{J}|_{\pi_{\Sigma}^{-1}(B_{\epsilon_n}(0))}(K_{\epsilon_n}) \\
    &\qquad \geq \bigg\{\frac{\epsilon_n}{8d^{3/2}}\rho(B_{\epsilon_n}(0))- 2\epsilon_n \int_{B_{\epsilon_n}(0)\cap \Sigma}|\mathcal{B}_{\pi_{\Sigma}}(\sigma)|d\nu_{\pi_{\Sigma}}(\sigma)\\
    &\qquad \qquad - (\frac{1 + 2 \sqrt{d} + 16d^{3/2}}{4d^{3/2}})\epsilon_n \gamma(B_{16d^{3/2}\epsilon_n}(\Sigma) \cap \pi_{\Sigma}^{-1}(B_{\epsilon_n}(0))) \\
    &\qquad \qquad  - 9L\epsilon_n^2\nu(B_{\epsilon_n}(0))\bigg\}.
    \end{align*}
    The assumption that $\nu$ is atomless implies that $\mathcal{B}_{\pi_{\Sigma}}$ is trivial by \cref{theorem:boundingmassofnoncutpoints}, and therefore
    \[
    2\epsilon_n \int_{B_{\epsilon_n}(0)\cap \Sigma}|\mathcal{B}_{\pi_{\Sigma}}(\sigma)|d\nu_{\pi_{\Sigma}}(\sigma) = 0.
    \]
    Since $\gamma(B_{\epsilon}(0)) = o(\epsilon)$ by assumption, we have
   \[
   (\frac{1 + 2 \sqrt{d} + 16d^{3/2}}{4d^{3/2}})\epsilon_n \gamma(B_{16d^{3/2}\epsilon_n}(\Sigma) \cap \pi_{\Sigma}^{-1}(B_{\epsilon_n}(0))) = o(\epsilon_n^2).
   \]
   Finally, since we are assuming $\nu$ is atomless, $\nu(B_{\frac{\epsilon_n}{8d^{3/2}}}(0)) = o(1)$, so we have
   \[
   9L\epsilon_n^2\nu(B_{\epsilon_n}(0)) = o(\epsilon_n^2).
   \]
   Thus, we may write our bound as  \begin{equation}\label{eq:1atomsexistfirstbound1} \mathscr{J}|_{\pi_{\Sigma}^{-1}(B_{\epsilon_n}(0))}(\Sigma) - \mathscr{J}|_{\pi_{\Sigma}^{-1}(B_{\epsilon_n}(0))}(K_{\epsilon_n}) \geq \frac{\epsilon_n}{8d^{3/2}}\rho(B_{\epsilon_n}(0)) + o(\epsilon_n^2).
    \end{equation}

    Now, we use \cref{prop:boundinghigherorderestimatescase1} to bound the term $\mathscr{J}|_{\pi_{\Sigma}^{-1}(\overline{B}_n)}(\Sigma) - \mathscr{J}|_{\pi_{\Sigma}^{-1}(\overline{B}_n)}(\partial B_n)$. Indeed, noticing that
    \[
    \mathscr{J}|_{\pi_{\Sigma}^{-1}(\overline{B}_n)} =\mathscr{J}^{\mu\llcorner \pi_{\Sigma}^{-1}(\overline{B}_n)},
    \]
    we may apply \cref{prop:boundinghigherorderestimatescase1} to see that for any $\delta > 0$,
   \begin{align*} &\mathscr{J}|_{\pi_{\Sigma}^{-1}(\overline{B}_n)}(\Sigma) - \mathscr{J}|_{\pi_{\Sigma}^{-1}(\overline{B}_n)}(\partial B_n) \\
   &\qquad \geq \mathscr{J}|_{\pi_{\Sigma}^{-1}(\overline{B}_n)}(\pi_{\Sigma}) - \mathscr{J}|_{\pi_{\Sigma}^{-1}(\overline{B}_n)}(\pi_{\partial B_n}\circ \pi_{\Sigma}) \\
   &\qquad \geq \bigg\{\int_{\overline{B_n}}(\pi_{\partial B_n}(\sigma) - \sigma)\cdot \mathcal{B}_{\pi_{\Sigma}}(\sigma)d\nu(\sigma)  - \frac{\epsilon_n^2}{2\delta}\gamma(\pi_{\Sigma}^{-1}(\overline{B}_n)\setminus B_{\delta}(\Sigma)) \\
   & \qquad \qquad  - L\epsilon_n^2 \mu(\pi_{\Sigma}^{-1}(\overline{B}_n)\setminus B_{\delta}(\Sigma))  - 2M\epsilon_n \mu(B_{\delta}(\Sigma)\cap \pi_{\Sigma}^{-1}(\overline{B}_n))\bigg\}.
   \end{align*}
    By \cref{theorem:boundingmassofnoncutpoints}, the assumption that $\nu$ has no atoms implies that the barycentre field is trivial. Moreover, since $\nu(B_n) \to 0$ as $n \to \infty$, we have that $L\epsilon_n^2 \nu(B_n) = o(\epsilon_n)^2$. Therefore, our bound simplifies to 
    \begin{equation}\label{eq:2existenceofatomsbound2}
    \begin{split} &\mathscr{J}|_{\pi_{\Sigma}^{-1}(\overline{B}_n)}(\Sigma) - \mathscr{J}|_{\pi_{\Sigma}^{-1}(\overline{B}_n)}(\partial B_n)
   \\
   &\qquad \geq - \frac{\epsilon_n^2}{2\delta}\rho(B_n) - o(\epsilon_n^2) - 2M\epsilon_n \mu(B_{\delta}(\Sigma)\cap \pi_{\Sigma}^{-1}(\overline{B}_n)).
    \end{split}
   \end{equation}
    Combining \eqref{eq:1atomsexistfirstbound1} and \eqref{eq:2existenceofatomsbound2}, we conclude that
    \begin{align*}
        \mathscr{J}(\Sigma) - \mathscr{J}(\Sigma'_n) &\geq \frac{\epsilon_n}{8d^{3/2}}\rho(B_{\epsilon_n}(0)) - \frac{\epsilon_n^2}{2\delta}\rho(B_n) \\ &\qquad - 2M\epsilon_n \mu(B_{\delta}(\Sigma)\cap \pi_{\Sigma}^{-1}(\overline{B}_n)) + o(\epsilon_n^2).
    \end{align*}
    which is the desired bound.
    \end{proof}
   
    Finally, we may use the bound \eqref{eq:conditionalboundequation} to contradict the minimality of $\Sigma$. To do so, we choose $\delta$ in terms of $\epsilon_n$ so that the right hand side of \eqref{eq:conditionalboundequation} is positive for sufficiently large $n$. Recall the definition of $C$ from \eqref{eq:Cdefinition}. Since $C > 0$, we may take 
    \[
    \delta = \frac{8d^{3/2}}{C}\epsilon_n.
    \]
    Since $\mu(B_{\epsilon}(0)) = o(\epsilon)$, we conclude that
    \[
    2M\epsilon_n \mu(B_{\frac{16d^{3/2}}{C}\epsilon_n}(\Sigma) \cap \pi_{\Sigma}^{-1}(\overline{B}_n)) = o(\epsilon_n^2).
    \]
    Moreover, by the definition of $C$ \eqref{eq:Cdefinition}, we have
    \[
    \frac{C}{2}\rho(B_n) \leq \rho(B_{\epsilon_n}(0))
    \]
    for all $n$ sufficiently large. So, substituting our choice of $\delta$ into \eqref{eq:conditionalboundequation}, we get that for all sufficiently large $n$,
    \[
    \mathscr{J}(\Sigma) - \mathscr{J}(\Sigma_n') \geq \frac{\epsilon_n}{16d^{3/2}}\rho(B_{\epsilon_n}(0)) + o(\epsilon_n^2).
    \]
    So, since $\lim_{n \to \infty}\frac{\rho(B_{\epsilon_n}(0))}{\epsilon_n} > 0$ by \cref{lemma:noncutneighbourhoodshavepositivemass}, if we choose $n$ to be large enough we find that
    \[
    \mathscr{J}(\Sigma) - \mathscr{J}(\Sigma'_n) > 0,
    \]
    contradicting the minimality of $\Sigma$.
\end{proof}

\subsection{Topological description of average distance minimizers}\label{sec:topologicaldescription}

By combining \cref{theorem:existenceofatom} with the conditional results of \cite{Stepanov06}, we may now prove the complete topological description of average distance minimizers when $\phi \in C^{1,1}_{\mathrm{loc}}([0, \infty))$ is strictly increasing and satisfies \eqref{eq:alpha2}. Recall from \eqref{eq:orddefinition} that 
\[
\mathrm{ord}_{\sigma}(\Sigma) := \limsup_{r \to 0^+}\#(\Sigma \cap \partial B_r(\sigma)).
\]

\begin{theorem}[Complete topological description]\label{theorem:stepanovtopologicalcharacterization}
    Assume $\phi \in C^{1,1}_{\mathrm{loc}}([0, \infty))$ is strictly increasing and satisfies \eqref{eq:alpha2}. Assume that $\mu(B_{\epsilon}(x)) = o(\epsilon)$ for each $x \in \mathbb{R}^d$. Then, any minimizer $\Sigma \in \mathcal{S}_{\ell}$ satisfies the following:
    \begin{enumerate}
        \item $\Sigma$ does not contain any homeomorphic images of $\mathbb{S}^1$, and in particular any noncut point of $\Sigma$ is an endpoint \cite{Stepanov04}*{Theorem 5.6}.
        \item The number of noncut points of $\Sigma$ is finite.
        \item There are finitely many branching points, i.e. points $x \in \Sigma$ with $\mathrm{ord}_x\Sigma > 2$, of $\Sigma$.
        \item Every branching point $x$ is a triple junction, i.e. $\mathrm{ord}_x \Sigma = 3$.
    \end{enumerate}
\end{theorem}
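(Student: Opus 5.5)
The proof will be a short assembly of \cref{theorem:existenceofatom}, the equivalence in \cref{cor:nontrivialbarycentrefieldandatoms}, and Stepanov's conditional result \cite{Stepanov06}*{Theorem 5.5}. Fix a minimizer $\Sigma \in \mathcal{S}_{\ell}$. If $\Sigma$ is a single point, every claim is vacuous. So assume $\Sigma$ contains at least two points.

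For (1) I will simply cite Paolini and Stepanov \cite{Stepanov04}*{Theorem 5.6}. Its hypotheses are \eqref{eq:alpha2} and that $\mu$ does not charge sets of finite $\mathcal{H}^1$-measure; the latter must be read off from our assumption $\mu(B_\epsilon(x)) = o(\epsilon)$. I expect this translation to be the main technical point. The plan is to cover a compact $\Sigma'$ with $\mathcal{H}^1(\Sigma') < \infty$ by balls $B_{r_i}(x_i)$ with $\sum r_i \lesssim \mathcal{H}^1(\Sigma')$, and conclude $\mu(\Sigma') \le \sum_i \mu(B_{r_i}(x_i)) = o(1)$. This needs a \emph{uniform} $o(\epsilon)$ bound over centres in the relevant compact set. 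If pointwise $o(\epsilon)$ does not give this directly, I would argue via the density $\limsup_{r\to 0}\mu(B_r(x))/r = 0$ for every $x$ and the standard density theorem comparing $\mu$ with $\mathcal{H}^1$: upper density zero everywhere on $\Sigma'$ gives $\mu \llcorner \Sigma' \le \tau\,\mathcal{H}^1 \llcorner \Sigma'$ for every $\tau>0$, hence $\mu(\Sigma')=0$. Given absence of loops, the statement that noncut points are endpoints follows because $\Sigma$ is then a finite-length dendrite, in which noncut points coincide with endpoints (order one).

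For (2), \cref{theorem:existenceofatom} produces an atom of $\Sigma$. By \cref{cor:nontrivialbarycentrefieldandatoms} ((iii) $\Rightarrow$ (i)), the barycentre field $\mathcal{B}_{\pi_\Sigma}$ is then nontrivial. Then \cref{cor:boundingmassofnoncutpoints} gives a uniform positive lower bound on $\nu(\{\sigma^*\})$ over all noncut points $\sigma^*$. Its hypothesis that $\mu$ vanishes on sets in $\mathcal{S}$ is exactly what the previous paragraph verifies. Since $\nu(\Sigma)=1$, there are only finitely many noncut points.

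For (3) and (4), having established that $\Sigma$ has an atom and that \eqref{eq:alpha2} holds, I will invoke \cite{Stepanov06}*{Theorem 5.5 (iii)--(iv)}, exactly as in \cref{cor:completetopdescriptionforsoftpenalty}. That result, conditional on the existence of an atom, gives finitely many branching points, each a triple junction. I will check that its remaining hypotheses (no mass on one-dimensional sets, $\Sigma$ optimal in $\mathcal{S}_\ell$) coincide with those established above. Alternatively, (3) follows from (2): a finite tree with finitely many endpoints has finitely many branch points, with $\sum(\mathrm{ord}-2)$ bounded by the number of endpoints.
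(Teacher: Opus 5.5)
Your proposal is correct, and at its core it is the paper's argument. The paper's proof is a single line: it obtains an atom from \cref{theorem:existenceofatom} and then reads off all four items from \cite{Stepanov06}*{Theorem 5.5}.

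You differ from this in two places.
\begin{enumerate}
\item For (2) you stay inside the paper. An atom makes $\mathcal{B}_{\pi_\Sigma}$ nontrivial via \cref{cor:nontrivialbarycentrefieldandatoms}, and \cref{cor:boundingmassofnoncutpoints} then gives a uniform lower bound on $\nu\{\sigma^*\}$ over noncut points. This avoids relying on Stepanov for that item and ties finiteness directly to the barycentre field.
\item You make explicit the conversion of $\mu(B_\epsilon(x))=o(\epsilon)$ into $\mu(\Sigma')=0$ whenever $\mathcal{H}^1(\Sigma')<\infty$. The density theorem does this correctly: upper $1$-density zero everywhere gives $\mu(A)\le 2\tau\,\mathcal{H}^1(A)$ for every $\tau>0$. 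The paper uses this step implicitly, for instance when applying \cref{theorem:boundingmassofnoncutpoints} inside the proof of \cref{theorem:existenceofatom}, without comment.
\end{enumerate}

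One caution concerns your fallback for (3). The finite-tree count controls the \emph{topological} order of points. Here, however, $\mathrm{ord}_\sigma\Sigma$ is the metric quantity $\limsup_{r\to 0^+}\#(\Sigma\cap\partial B_r(\sigma))$, which can exceed the topological order on a general rectifiable arc. An injective, finite-length arc whose distance to $\sigma$ oscillates non-monotonically as it approaches $\sigma$ already has metric order at least $3$ at a topological endpoint. So that shortcut needs the regularity established in Stepanov's work, and you should keep the citation route for (3)--(4), as the paper does.
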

\begin{proof}
    By \cref{theorem:existenceofatom}, the conditional assumption that $\Sigma$ has an atom in \cite{Stepanov06}*{Theorem 5.5} holds, and so the theorem follows from \cite{Stepanov06}*{Theorem 5.5}. 
\end{proof}

\subsection{Bounding the number of endpoints of minimizers}\label{sec:branchingrates}

From \cref{theorem:stepanovtopologicalcharacterization}, we know that any minimizer $\Sigma \in \mathcal{S}_{\ell}$ of the average distance problem has only finitely many endpoints points. However, it is not clear how we should expect the number of endpoints points of $\Sigma$ to behave as we vary $\ell$. Intuitively, we may expect fewer endpoints points when $\ell$ is small compared to $\mathrm{diam}(\supp \mu)$; this is supported by the numerical results of \cite{Buttazzo02}*{Figures 3-5 and Appendix B}. It is much less clear how we should expect the number of noncut points to change as $\ell$ grows larger: comparing the numerical results \cite{Buttazzo02}*{Figures 3-5} and \cite{Buttazzo02}*{Figures 10-12} seems to suggest qualitatively different growth rates for the number of endpoints even between the cases when $\mu$ is the uniform measure on the unit ball versus the unit square in $\mathbb{R}^2$.

In this section, we will provide an upper bound on the number of noncut points of a hard-constraint average distance minimizer $\Sigma$ depending on $\ell$ and a quantity defined via the barycentre field. This further deepens our ability to understand the structure of a minimizer $\Sigma$ in terms of its barycentre field. 

Let us remark that we do in general expect there to be some branching present as $\ell$ increases: lower bounds on the time at which a branching must occur in the quasi-static evolution for the average distance functional are provided by Lu \cite{Lu12} for certain configurations.

Let
\begin{equation}\label{eq:noncutSigmadefinition}
    \mathrm{Noncut}(\Sigma) := \{\sigma \in \Sigma \ | \ \sigma \text{ is a noncut point} \}
\end{equation}
be the set of noncut points of $\Sigma$. Define
\[
N_{\Sigma} : = \#\mathrm{Noncut}(\Sigma),
\]
and let 
\begin{equation}\label{eq:branchingratedef}
    \begin{split}
        n(\ell) &:= \min \{N_{\Sigma}\ | \ \Sigma \in \mathcal{S}_{\ell} \text{ is an average distance minimizer}\}; \\
        N(\ell)&:= \max \{N_{\Sigma}\ | \ \Sigma \in \mathcal{S}_{\ell} \text{ is an average distance minimizer}\}.
    \end{split}
\end{equation}
We will refer to $N(\ell)$ and $n(\ell)$ as the \textit{upper and lower endpoint rates}, respectively. This terminology is justified by \cref{theorem:stepanovtopologicalcharacterization} (1), which says that every noncut point of an optimizer $\Sigma$ is an endpoint, and we have that
\[
2 \leq n(\ell) \leq N(\ell) < \infty
\]
for all $\ell$ by \cref{theorem:stepanovtopologicalcharacterization} (2).
\begin{remark}
    It is clear that minimizers of the average distance problem need not be unique: for example, when $\mu$ is the uniform measure on the unit disk, any minimizer remains a minimizer after a rotation. However, it is not clear whether we will have $n(\ell) = N(\ell)$ for every $\ell > 0$.
\end{remark}
We now combine \cref{theorem:boundingmassofnoncutpoints} and \cref{prop:boundingscalingconstantintermsofatoms} to obtain a version of \cite{Buttazzo03}*{Proposition 7.1} in which the constant is computed in terms of the barycentre field.

\begin{lemma}[Comparison of noncut points]\label{lemma:boundingnoncutpointintermsofnoncutpoint}
    Assume $\phi \in C^{1,1}_{\mathrm{loc}}([0, \infty))$ is strictly increasing, and satisfies \eqref{eq:alpha2}. Let $\Sigma \in \mathcal{S}_{\ell}$ be optimal. Then, for any noncut points $\sigma_1, \sigma_2 \in \Sigma$,
    \[
    \frac{1}{4d^{3/2}}\rho_{\pi_{\Sigma}}\{\sigma_1\} \leq |\mathcal{B}_{\pi_{\Sigma}}(\sigma_2)|\nu_{\pi_{\Sigma}}\{\sigma_2\}
    \]
\end{lemma}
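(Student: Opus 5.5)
The plan is to chain the scaling-constant lower bound of \cref{prop:boundingscalingconstantintermsofatoms} with the upper bound on Lipschitz pairings from \cref{theorem:boundingmassofnoncutpoints}, using \cref{remark:scalingandlipschitzinequality} as the bridge.

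First, apply \cref{prop:boundingscalingconstantintermsofatoms} at the point $\sigma_1$. This gives
\[
\beta_{\Sigma} \geq \frac{\ell}{4d^{3/2}}\rho_{\pi_{\Sigma}}\{\sigma_1\}.
\]
Second, by \cref{lemma:independenceandinterpretationofscalingconstant}, $\beta_\Sigma = \beta^{a}_{\pi_\Sigma}$ for any base point $a$. Taking $a = \sigma_2$ and noting that $\sigma \mapsto \sigma - \sigma_2$ is $1$-Lipschitz, \cref{remark:scalingandlipschitzinequality} yields
\[
\beta_\Sigma \leq \sup_{\zeta \in \mathrm{Lip}^*(\Sigma)}\int_\Sigma \zeta\cdot\mathcal{B}_{\pi_\Sigma}\,d\nu_{\pi_\Sigma}.
\]
Third, since $\sigma_2$ is a noncut point, \cref{theorem:boundingmassofnoncutpoints} bounds this supremum by $\ell|\mathcal{B}_{\pi_\Sigma}(\sigma_2)|\nu_{\pi_\Sigma}\{\sigma_2\}$. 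Chaining the three inequalities and dividing by $\ell>0$ gives the claim.

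The main obstacle is checking the hypotheses of the cited results rather than the algebra.
\begin{itemize}
\item \cref{theorem:boundingmassofnoncutpoints} requires $\Sigma$ to contain at least two points. If $\Sigma$ is a singleton, then $\sigma_1=\sigma_2$ and the claim must be handled directly; I would dispose of this degenerate case separately, or observe that for $\ell>0$ a minimizer has positive length.
\item \cref{theorem:boundingmassofnoncutpoints} also requires $\mu(\Sigma')=0$ for sets $\Sigma'\in\mathcal{S}$. The lemma as stated lists only \eqref{eq:alpha2} and the regularity of $\phi$, so I would carry over the standing assumption on $\mu$ used throughout this section (e.g.\ $\mu(B_\epsilon(x))=o(\epsilon)$, or at least that $\mu$ charges no set of finite $\mathcal{H}^1$ measure).
\item \cref{prop:boundingscalingconstantintermsofatoms} needs $\mu$ to have no atoms in order to control $\gamma(B_{16d^{3/2}\epsilon}(0))$, and this also follows from that assumption.
\end{itemize}

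If the barycentre field is trivial, the right-hand side is zero. The inequality is then still consistent, since \cref{cor:nontrivialbarycentrefieldandatoms} forces $\rho_{\pi_\Sigma}\{\sigma_1\}=0$. In any case the chain above already covers this situation, so no separate treatment is needed.
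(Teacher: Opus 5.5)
Your proposal is correct and follows the paper's proof: apply \cref{prop:boundingscalingconstantintermsofatoms} at $\sigma_1$, bound $\beta_\Sigma$ by the $1$-Lipschitz supremum via \cref{remark:scalingandlipschitzinequality}, bound that supremum using \cref{theorem:boundingmassofnoncutpoints} at the noncut point $\sigma_2$, and divide by $\ell$. The caveats you flag are real and left implicit in the paper's proof: the lemma omits the hypothesis that $\mu$ has no atoms and charges no set of finite $\mathcal{H}^1$-measure, and it needs $\Sigma$ to contain at least two points. Carrying over the section's standing assumptions on $\mu$, as you propose, is the right reading.
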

\begin{proof}
    Combining \cref{prop:boundingscalingconstantintermsofatoms}, \cref{remark:scalingandlipschitzinequality}, and \cref{theorem:boundingmassofnoncutpoints}, we get
    \begin{align*}
        \frac{\ell}{4d^{3/2}}\rho_{\pi_{\Sigma}}\{\sigma_1\} &\leq \beta_{\Sigma} \\ &\leq \sup_{\xi \in \mathrm{Lip}^*(\Sigma)}\frac{1}{\mathrm{Lip}(\xi)}\int_{\Sigma} \xi(\sigma)\cdot \mathcal{B}_{\pi_{\Sigma}}(\sigma)d\nu_{\pi_{\Sigma}}(\sigma) \\ &\leq \ell|\mathcal{B}_{\pi_{\Sigma}}(\sigma_2)|\nu_{\pi_{\Sigma}}\{\sigma_2\},
    \end{align*}
    so dividing through by $\ell$ yields the desired inequality.
\end{proof}

Now, we will use \cref{lemma:boundingnoncutpointintermsofnoncutpoint} to provide an upper bound on the number of noncut points of an average distance minimizer $\Sigma$.
\begin{proposition}[Upper bound on the number of endpoints]\label{prop:initialboundonbranchingrate}
    Assume $\phi \in C^{1,1}_{\mathrm{loc}}([0, \infty))$ satisfies \eqref{eq:alpha2}. Assume that $\mu(B_{\epsilon}(x)) = o(\epsilon)$ for each $x \in \mathbb{R}^d$. Let $\Sigma \in \mathcal{S}_{\ell}$ be optimal, and define  \begin{equation}\label{eq:globalendpointcurvature}
        \Lambda_{\Sigma} := \inf\{\mathrm{Lip}(f) \ | \ \int_{\mathrm{Noncut}(\Sigma)}|\mathcal{B}_{\pi_{\Sigma}}(\sigma)| d\nu(\sigma) \leq \int_{\Sigma}f(\sigma)\cdot \mathcal{B}_{\pi_{\Sigma}}(\sigma)d\nu(\sigma)\}.
    \end{equation}
    Then, for any $\sigma^* \in \mathrm{Noncut}(\Sigma)$,
    \[
    N_{\Sigma} \leq 4d^{3/2}\Lambda_{\Sigma}\ell.
    \]
\end{proposition}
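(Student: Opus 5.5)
We combine \cref{theorem:boundingmassofnoncutpoints} (in its rescaled form for general Lipschitz maps) with \cref{lemma:boundingnoncutpointintermsofnoncutpoint}, summing the latter over all noncut points. First, \cref{theorem:existenceofatom} and \cref{cor:nontrivialbarycentrefieldandatoms} show that $\mathcal{B}_{\pi_{\Sigma}}$ is nontrivial and every noncut point is an atom. \cref{theorem:stepanovtopologicalcharacterization} (2) shows $N_\Sigma<\infty$, so $\mathrm{Noncut}(\Sigma)$ is a finite set of atoms. Hence
\[
\int_{\mathrm{Noncut}(\Sigma)}|\mathcal{B}_{\pi_{\Sigma}}|\,d\nu=\sum_{\sigma\in \mathrm{Noncut}(\Sigma)}|\mathcal{B}_{\pi_{\Sigma}}(\sigma)|\nu\{\sigma\}.
\]

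\emph{Step 1 (upper bound).} Take any Lipschitz $f$ admissible in the definition of $\Lambda_\Sigma$. Applying \cref{theorem:boundingmassofnoncutpoints} to the $1$-Lipschitz map $\zeta=f/\mathrm{Lip}(f)$ and the fixed noncut point $\sigma^*$ gives
\[
\int_{\mathrm{Noncut}(\Sigma)}|\mathcal{B}_{\pi_{\Sigma}}|\,d\nu\le \int_\Sigma f\cdot\mathcal{B}_{\pi_{\Sigma}}\,d\nu\le \mathrm{Lip}(f)\,\ell\,|\mathcal{B}_{\pi_{\Sigma}}(\sigma^*)|\nu\{\sigma^*\}.
\]
Taking the infimum over admissible $f$ yields
\[
\sum_{\sigma\in\mathrm{Noncut}(\Sigma)}|\mathcal{B}_{\pi_{\Sigma}}(\sigma)|\nu\{\sigma\}\le \Lambda_\Sigma\,\ell\,|\mathcal{B}_{\pi_{\Sigma}}(\sigma^*)|\nu\{\sigma^*\}.
\]

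\emph{Step 2 (lower bound).} Apply \cref{lemma:boundingnoncutpointintermsofnoncutpoint} to each noncut point $\sigma$, with the pair $(\sigma_1,\sigma_2)=(\sigma^*,\sigma)$:
\[
\frac{1}{4d^{3/2}}\rho\{\sigma^*\}\le |\mathcal{B}_{\pi_{\Sigma}}(\sigma)|\nu\{\sigma\}.
\]
Summing over the $N_\Sigma$ noncut points gives
\[
\frac{N_\Sigma}{4d^{3/2}}\rho\{\sigma^*\}\le \Lambda_\Sigma\,\ell\,|\mathcal{B}_{\pi_{\Sigma}}(\sigma^*)|\nu\{\sigma^*\}.
\]

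\emph{Step 3 (conclusion).} To finish we need $|\mathcal{B}_{\pi_{\Sigma}}(\sigma^*)|\nu\{\sigma^*\}\le\rho\{\sigma^*\}$. This follows from the triangle inequality:
\[
|\mathcal{B}_{\pi_{\Sigma}}(\sigma^*)|\nu\{\sigma^*\}=\Big|\int_{\pi_{\Sigma}^{-1}\{\sigma^*\}}\Delta_{\pi_\Sigma}\,d\mu\Big|\le\gamma(\pi_\Sigma^{-1}\{\sigma^*\})=\rho\{\sigma^*\}.
\]
Substituting into Step 2 gives $N_\Sigma\rho\{\sigma^*\}\le 4d^{3/2}\Lambda_\Sigma\ell\,\rho\{\sigma^*\}$. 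Since $\sigma^*$ is an atom and $\phi'>0$ $\mu$-a.e. off $\Sigma$, we have $\rho\{\sigma^*\}>0$, and dividing by it gives $N_\Sigma\le 4d^{3/2}\Lambda_\Sigma\ell$.

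\emph{Main obstacle.} The difficulty is in the hypotheses rather than the inequalities. \cref{lemma:boundingnoncutpointintermsofnoncutpoint}, \cref{prop:boundingscalingconstantintermsofatoms} and \cref{cor:nontrivialbarycentrefieldandatoms} all assume $\phi$ is strictly increasing, which the statement does not list. I would either add this assumption, as in \cref{theorem:existenceofatom}, or check that nondecreasing suffices wherever $\rho\{\sigma^*\}>0$ is used. One must also ensure the admissible set for $\Lambda_\Sigma$ is nonempty; if it is empty, then $\Lambda_\Sigma=\infty$ and the bound is trivial. Nonemptiness follows from density of Lipschitz maps in $L^2(\nu)$ (\cref{prop:lipschitzapproximation}) after rescaling.
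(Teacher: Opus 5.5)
Your argument is correct and uses the same ingredients as the paper: the comparison lemma, \cref{theorem:boundingmassofnoncutpoints} applied to $f/\mathrm{Lip}(f)$, and the triangle inequality $|\mathcal{B}_{\pi_\Sigma}(\sigma)|\nu\{\sigma\}\le\rho\{\sigma\}$. The only difference is bookkeeping: you apply \cref{lemma:boundingnoncutpointintermsofnoncutpoint} off the diagonal with $\sigma_1=\sigma^*$ fixed and divide by $\rho\{\sigma^*\}$, while the paper takes $\sigma_1=\sigma_2$, sums the bounding-mass inequality over all noncut points, and divides by $\rho(\mathrm{Noncut}(\Sigma))$. Your remark about the missing strict-monotonicity hypothesis is also correct, since the paper's proof relies on the same results that assume it.
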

\begin{proof}
    Let $f$ be a Lipschitz function satisfying the defining inequality in \eqref{eq:globalendpointcurvature}. 
    By \cref{lemma:boundingnoncutpointintermsofnoncutpoint}, for any noncut point $\sigma^* \in \mathrm{Noncut}(\Sigma)$,
    \[
    \frac{1}{4d^{3/2}}\rho_{\pi_{\Sigma}}\{\sigma^*\} \leq |\mathcal{B}_{\pi_{\Sigma}}(\sigma^*)|\nu_{\pi_{\Sigma}}\{\sigma^*\}.
    \]
    Since $\mathrm{Noncut}(\Sigma)$ is finite by \cref{theorem:stepanovtopologicalcharacterization}, we may take the sum over all $\sigma^* \in \mathrm{Noncut}(\Sigma)$ to yield
    \begin{align*}
         \frac{1}{4d^{3/2}}\rho(\mathrm{Noncut}(\Sigma)) &\leq \int_{\mathrm{Noncut}(\Sigma)}|\mathcal{B}_{\pi_{\Sigma}}(\sigma)|d\nu(\sigma)\\
         &\leq \int_{\Sigma}f(\sigma)\cdot \mathcal{B}_{\pi_{\Sigma}}(\sigma)d\nu(\sigma).
    \end{align*}
    But by \cref{theorem:boundingmassofnoncutpoints}, for any noncut point $\sigma^* \in \mathrm{Noncut}(\Sigma)$, we have
    \begin{align*}   \int_{\Sigma}f(\sigma)\cdot \mathcal{B}_{\pi_{\Sigma}}(\sigma)d\nu(\sigma) &\leq \mathrm{Lip}(f) \sup_{\xi \in \mathrm{Lip}^*(\Sigma)}\int_{\Sigma} \xi(\sigma)\cdot \mathcal{B}_{\pi_{\Sigma}}(\sigma)d\nu_{\pi_{\Sigma}}(\sigma) \\
    &\leq \mathrm{Lip}(f)\ell|\mathcal{B}_{\pi_{\Sigma}}(\sigma^*)|\nu_{\pi_{\Sigma}}(\sigma^*),
    \end{align*}
    and so
    \[
    \frac{1}{4d^{3/2}}\rho(\mathrm{Noncut}(\Sigma)) \leq \mathrm{Lip}(f)\ell|\mathcal{B}_{\pi_{\Sigma}}(\sigma^*)|\nu\{\sigma^*\}.
    \]
    Summing over all $\sigma^* \in \mathrm{Noncut}(\Sigma)$ again, we get
    \[
    N_{\Sigma}\frac{1}{4d^{3/2}}\rho(\mathrm{Noncut}(\Sigma)) \leq \mathrm{Lip}(f)\ell\int_{\mathrm{Noncut}(\Sigma)}|\mathcal{B}_{\pi_{\Sigma}}(\sigma)|  d\nu(\sigma).
    \]
    Recall that by the triangle inequality, for any measurable $E$, we have
    \[
    \int_{E}|\mathcal{B}_{\pi_{\Sigma}}(\sigma)| d\nu(\sigma) \leq \rho(E).
    \]
    So, since $\rho(\mathrm{Noncut}(\Sigma)) > 0$ by Corollary \ref{cor:nontrivialbarycentrefieldandatoms} and Theorem \ref{theorem:existenceofatom}, by taking the infimum over all $f$ satisfying the inequality in \eqref{eq:globalendpointcurvature} we conclude that
    \[
    N_{\Sigma} \leq 4d^{3/2}\Lambda_{\Sigma}\ell.
    \]  
\end{proof}

Even in the simplest cases, little is currently known about the behaviour of $N(\ell)$: indeed, it is not even clear whether $N(\ell) \to \infty$ as $\ell \to \infty$ when $\mu$ is the uniform measure on the unit ball in $\mathbb{R}^2$, c.f. \cite{Buttazzo02}*{Figures 3-5}. A better understanding of the behaviour of $n(\ell)$ and $N(\ell)$ would be very interesting, and \cref{prop:initialboundonbranchingrate} exhibits the usefulness of the barycentre field for studying this problem.

\end{document}